\documentclass[a4paper,11pt]{article}
\usepackage[T1]{fontenc}
\usepackage[utf8]{inputenc}
\usepackage[italian,english]{babel}
\usepackage{microtype}
\usepackage{quoting}
\usepackage{amsthm, amsmath, amssymb, amscd, mathrsfs, mathtools,tensor} 
\usepackage[a4paper]{geometry}
\geometry{text={15.7 cm, 22.8 cm},centering,includefoot}
\usepackage{booktabs, caption, graphicx} 
\usepackage{enumitem}
\usepackage[framemethod=default]{mdframed}
\usepackage{braket}
\usepackage{tikz-cd}
\usepackage{centernot}
\usepackage{fancyhdr}
\usepackage{emptypage}
\usepackage{esint}
\usepackage[english]{varioref}

\mathtoolsset{showonlyrefs}

\usepackage[backend=biber,style=alphabetic]{biblatex}
\usepackage{csquotes}
\addbibresource{bibliography.bib}

\global\mdfdefinestyle{exampledefault}{}

\usepackage{settings} 

\author{Mattia Freguglia and Andrea Malchiodi\thanks{Scuola Normale Superiore, Piazza dei Cavalieri 7, 56126 Pisa. e-mails: mattia.freguglia@sns.it, andrea.malchiodi@sns.it} }

\date{}

\title{Yamabe metrics on conical manifolds}

\begin{document}

\maketitle

{\footnotesize
\begin{abstract}

\noindent We prove existence of Yamabe metrics on singular manifolds 
with conical points and conical links of Einstein type that include orbifold 
structures.  We deal with metrics of generic type and 
derive a counterpart of Aubin's classical result. Interestingly, the 
singular nature of the metric determines a different condition on the 
dimension, compared to the regular case. We derive asymptotic expansions 
on the Yamabe quotient by adding a proper and implicit lower-order correction to 
standard bubbles, whose contribution to the expansion of the quotient can be determined combining the decomposition of symmetric two-tensor fields and Fourier analysis on the conical links. 

\vspace{3ex}

\noindent{\it Key Words:} Singular Yamabe problem, Conical Metrics, 
Conformal Geometry.



\end{abstract}}

\section{Introduction}

Yamabe's problem consists in finding metrics in a given conformal class with constant scalar curvature, and represents a higher-dimensional counterpart of the 
classical uniformization problem. 
It was introduced in \cite{Yamabe-60} and it drew a lot of attention even after its 
resolution, that happened a few decades later.

One of the challenging features of the problem is its lack of compactness. 
In fact, letting $(M,g)$ denote a Riemannian manifold of dimension $n \geq 3$ 
with scalar curvature $R_g$, if one 
performs the conformal change of metric $\tilde{g} = w^{\frac{4}{n-2}} g$, with 
$w > 0$, then the scalar curvature $R_{\tilde{g}}$ of $\tilde{g}$ is given by 
\[
  \lp_g w = R_{\tilde{g}} w^{\frac{n+2}{n-2}}. 
\]
Here $\lp_g$ stands for the {\em conformal Laplacian}, whose expression is 
$$
  \lp_g w = - a \Delta_g w + R_g w, \qquad \quad a = \frac{4(n-1)}{n-2}, 
$$
and that satisfies the covariance law  
\begin{equation}\label{eq:conf-lap}
    \lp_{\tilde{g}} \phi  = w^{- \frac{n+2}{n-2}} \lp_g (w \, \phi), \qquad \phi \in C^\infty(M).  
\end{equation} 
In view of the above transformation law, the Yamabe problem consists in 
finding a positive solution to 
\begin{equation}\tag{$\mathcal{Y}$} \label{eq:Y}
	\lp_g u = \bar{R} \, u^{\frac{n+2}{n-2}}, 
\end{equation}
	for some $\bar{R} \in \R$. 
	
	The constant $\bar{R}$ plays the role of a Lagrange multiplier: indeed, one can consider extremals of the quadratic form 
	$$
	\int_M \Big( a |\nabla_g u|^2 + R_g u^2 \Big) \, d\mu_g = \int_M u \lp_g u \, d\mu_g
	$$
	constrained to the family of functions  normalized e.g. to one in $L^{\frac{2n}{n-2}}$. Constrained critical points, up to a constant 
	multiple, are easily seen to solve \eqref{eq:Y}. Equivalently, one can search for critical points 
	of the Sobolev-type quotient on $H^1(M) := \big\{ u \colon M \to \R \, | \, u, \nabla u \in L^2 \big\}$. 
	\begin{equation}\label{eq:Qg}
		Q_g (u) := \frac{\int_M \Big( a |\nabla_g u|^2 + R_g u^2 \Big) \, d\mu_g }{\Big( \int_M |u|^{\frac{2n}{n-2}} \, d\mu_g \Big)^{\frac{n-2}{n}}}, 
		\qquad \quad u \in H^1(M). 
	\end{equation}
	Similarly to $\lp_g$, the quotient $Q_g$ satisfies the relation $Q_{\tilde{g}} (u) = Q_g (w\, u)$ where $\tilde{g} = w^{\frac{4}{n-2}} g$. 
The difficulty in extremizing $Q_g$ is that the embedding $H^1(M) \hookrightarrow L^{\frac{2n}{n-2}}(M)$ is 
continuous but not compact, hence minimizing sequences for $Q_g$ might a priori not converge, 
and develop instead {\em bubbling}, i.e. exhibit concentration of the $H^1$-norm or of the $L^{\frac{2n}{n-2}}$-norm 
at arbitrarily small scales. This phenomenon results in the nonlinear term on the  right-hand side of \eqref{eq:Y} being 
of {\em critical type}.

Progress on the problem was made by considering the {\em Yamabe quotient}, which is 
defined as 
\[
   \mathcal{Y}(M, g) := \inf_{\substack{{u \in C^{\infty}(M)} \\ u > 0}} Q_g(u). 
\]
The above covariance of $Q_g$ implies that indeed $\mathcal{Y}(M, g) = \mathcal{Y}(M, \tilde{g})$ for any metric 
$\tilde{g}$ conformal to $g$, and therefore we will use the notation $\mathcal{Y}(M,[g])$, where $[g]$ 
stands for the conformal class of $g$. Conformal classes $[g]$ for which $\mathcal{Y}(M,[g]) > 0$ (respectively, 
$= 0$  or $< 0$) are said to be of {\em positive Yamabe class} (respectively, of {\em null} or {\em negative} class). 
In \cite{Trudinger-68} it was shown that for every $n \geq 3$ there exists a dimensional constant $\varepsilon_n > 0$ such that
$\mathcal{Y}(M,[g])$ is attained provided $\mathcal{Y}(M,[g]) \leq \eps_n$, which holds in particular  when the 
Yamabe class is negative or null. A sharper version of this result was obtained in \cite{Aubin-76-2}, 
where it was proven that the infimum of~$Q_g$ is attained whenever 
\begin{equation}\label{eq:Y<}
  \mathcal{Y}(M,[g]) < \mathcal{Y}(S^n,[g_{S^n}]), 	
\end{equation}
where $g_{S^n}$ denotes the round metric on the sphere. As $(S^n,g_{S^n})$ is conformally 
equivalent to $\R^n$ via stereographic projection, $\mathcal{Y}(S^n,[g_{S^n}])$ coincides up to a 
dimensional constant with the Sobolev constant of $\R^n$, which is attained by the 
function 
\begin{equation}\label{eq:U-def}
\bar{U}(x) = \bar{U}(\abs{x}) = \left( \frac{1}{1+|x|^2} \right)^{\frac{n-2}{2}}, 
\end{equation}
see also \cite{Tal}. Such profiles, after a proper {\em dilation} of type $\bar{U}_\eps(x) 
\cong \eps^{-\frac{n-2}{2}} \bar{U}(x/\eps)$, for $\eps$ small, can be glued using 
normal coordinates to any point $p$ of $M$, and the geometry of $(M,g)$ affects 
the expansion of $Q_g(\bar{U}_\eps)$ for $\eps$ small. Such an expansion is 
ruled by the Weyl tensor $W_g$ at $p$ when $n \geq 6$, making $Q_g(\bar{U}_\eps)$ smaller 
than $\mathcal{Y}(S^n,[g_{S^n}])$.  Instead, for $n \leq 5$ (or when $g$ is 
locally conformally flat), it is 
determined by the {\em mass} of $(M,g)$ conformally deformed by the Green's function 
of $\lp_g$ with pole at $p$.  Using the  {Positive Mass Theorem} from~\cite{Schoen-Yau-79},~\cite{Schoen-Yau-81} and~\cite{Schoen-Yau-88}, 
R.~Schoen verified in~\cite{Schoen-84} the above inequality also for the latter cases. 
Notice that the decay of $\bar{U}$ is faster in higher dimensions, and 
therefore the expansions are of local nature in this case.

\

In this paper we consider a singular version of the Yamabe problem, and we are mainly 
(but not only, see Remark \ref{rem:intro} (a))
interested in the case of manifolds 
possessing a finite number of conical points. More precisely, these spaces are compact metric spaces $(M,d)$ such that there exist a finite number of points $\{ P_1, \dots , P_l \}$ and a Riemannian metric $\bar{g}$ on $M \setminus \{ P_1, \dots , P_l \}$ that induces the same distance $d$. In addition, we ask that for every point $P_i$ there exists an open neighborhood $\mathcal{U}_i$ of $P_i$ and a diffeomorphism $\sigma_i$ such that
\begin{equation} \label{eq:diffeo}
    \sigma_i \colon \mathcal{U}_i \setminus \{ P_i \} \to (0,R_i) \times Y_i, \quad (\sigma_i)_{*} \bar{g} = ds^2 + s^2 h_i(s),
\end{equation}
where $Y_i$ is a smooth closed manifold called the \emph{link} over the \emph{conical} point $P_i$ and $h_i(s)$ is a smooth family of metrics on $Y_i$.
Therefore, the metric ball of radius $s$ around 
a conical point $P$ is of the type  
\[
B_s(P) = \big( [0,s) \times Y \big)|_\sim,
\]
namely a topological cylinder collapsed on one side. The metric $\bar{g}$ behaves like 
\[
  \bar{g} \simeq ds^2 + s^2 h_0, \quad \text{as $s \to 0$},  
\]
where $s$ denotes the geodesic distance from $P$ and $h_0$ is the metric on $Y$ given by $h(s)$ at $s=0$. We refer e.g. to~\cite{stone} for the use of normal coordinates in the neighborhood 
of conical points.

There has been recently a growing interest in non-smooth manifolds, because 
they might for example arise as Gromov-Hausdorff limits of degenerating regular ones. 
Singularities of the form described above appear in the study of Einstein or 
\emph{critical} metrics (\cite{And}, \cite{Ti-Vi-1}, \cite{Ti-Vi-2}): 
in these cases singular points are of \emph{orbifold type}, 
i.e. for which  
\begin{equation}\label{eq:Y-gamma}
  Y = S^{n-1}/\Gamma, 	
\end{equation}
where $\Gamma$ stands for a finite group of isometries of $S^{n-1}$ without 
fixed points. Other types of isolated singularities appear e.g. in the extremization of the Yamabe quotient with respect to the conformal class $[\bar{g}]$ 
(see \cite{Ak-94} and \cite{Ak-96}), but the structure of general singular manifolds can be quite rich: we refer the 
reader to   \cite{Maz-edge} for a more systematic treatment of stratified spaces, 
which also played a fundamental role in the study of K\"ahler-Einstein metrics, see 
e.g.~\cite{JMR} and references therein. Some positive mass theorems in the singular 
setting have been proven in e.g. \cite{Nak}, \cite{Ak-Bo} and  \cite{DSW}.

The Yamabe problem on singular spaces can  present new phenomena and difficulties 
compared to the regular case. The authors in \cite{Ak-Mon} showed that on $S^n$,
with a conical wedge being an equatorial $S^{n-2}$ of angle $\alpha \geq 4 \pi$, 
the Yamabe quotient is not attained. In 
\cite{Vi} there are examples of conformally orbifold-compactified 
(four-dimensional) hyperk\"ahler ALE manifolds for which the Yamabe equation is 
not even solvable, at least with conformal factors preserving a 
smooth orbifold structure. This example is somehow reminiscent 
of the two-dimensional \emph{teardrop} - a topological sphere with a conical point -  
on which no metric with constant Gaussian curvature exists.

One feature of the singular Yamabe problem  is that blow-ups might occur 
either at regular points or at singular ones. While in the former case the local contribution 
to the Yamabe quotient is $\mathcal{Y}(S^n, [g_{S^n}])$, as for smooth manifolds, in the 
latter case the value might be different and given by the {\em local Yamabe constant} 
defined as 
\begin{equation}\label{eq:YYPP}
  \mathcal{Y}_P = \lim_{r \to 0} \big( \inf \big\{ Q_{\bar{g}}(u) \colon u \in W^{1,2}_0(B_r(P)) \big\} \big). 
\end{equation}

The local Yamabe constant $\mathcal{Y}_P$ coincides with the Yamabe constant of the purely conical metric
$ds^2 + s^2 h_0$ on $(0,+\infty) \times Y$,  
is upper bounded by $\mathcal{Y}(S^n,[g_{S^n}])$
and is explicitly known in some examples. For orbifold points 
 $\mathcal{Y}_P = \mathcal{Y}(S^n,[g_{S^n}])/k^{2/n}$, with $k$ being the cardinality of the group 
$\Gamma$ in~\eqref{eq:Y-gamma}, see \cite{Ak-orb}, and if $(Y,h_0)$ is an Einstein manifold with 
$\mathrm{Ric}(h_0) = (n-2) h_0$, one has that $\mathcal{Y}_P = ( \mathrm{Vol}_{h_0} (Y) / \mathrm{Vol}_{g_{S^{n-1}}}(S^{n-1}))^{\frac 2n} \mathcal{Y}(S^n,[g_{S^n}])$, see 
Corollary~1.3 in~\cite{Petean}.

In \cite{Ak-Bo} and \cite{ACM}   the authors proved, in the spirit of
 \cite{Aubin-76-2}, that  solutions of the  Yamabe equation 
exist on certain stratified manifolds, recalled later in more detail, provided one has the inequality  
\begin{equation} \label{eq:sing-quot}
  \mathcal{Y}(M,[\bar{g}]) < \min_{P \in S} \mathcal{Y}_P, 
\end{equation}
where $S$ denotes the family of singular points. Similarly to the regular case the right-hand side of the previous formula is defined as
\begin{equation}
    \mathcal{Y}(M,[\bar{g}])
    :=
    \inf_{\substack{{u \in W^{1,2}(M)} \\ u \neq 0}} Q_{\bar{g}}(u)
    =
    \inf_{\substack{{u \in W^{1,2}(M)} \\ u \neq 0}} \frac{\int_{\Omega} \Big( a |\nabla_{\bar{g}} u|^2 + R_{\bar{g}} u^2 \Big) \, d\mu_{\bar{g}} }{\Big( \int_{\Omega} |u|^{\frac{2n}{n-2}} \, d\mu_{\bar{g}} \Big)^{\frac{n-2}{n}}}, 
\end{equation}
where $\Omega$ is the \emph{regular part} of $M$ (in the case of a conical manifold is simply the complement of the conical points)
and $W^{1,2}(M)$ denotes the space obtained as the closure of Lipschitz functions with respect to the $W^{1,2}$-norm.

\smallskip

To our knowledge, even though different examples are known (see also \cite{Jeffres-Rowlett} for cases with negative scalar curvature), there are however no general conditions for which  \eqref{eq:sing-quot} 
 is verified, and it is indeed the purpose of this paper to have one. Before stating our 
 results, we recall the notion of {\em normalized Einstein-Hilbert functional}:
 \begin{equation} \label{eq:def-EH}
  \mathcal{R}(g) = \mathcal{R}_M({g}) := \mathrm{Vol}_{g}(M)^{-\frac{n-2}{n}} \int_M R_{{g}} \, d\mu_{{g}}.
 \end{equation}
 Recalling also \eqref{eq:Qg}, it turns out that if $\tilde{g} = u^{\frac{4}{n-2}} g$, then $Q_g(u) = \mathcal{R}(\tilde{g})$. 
 Critical points of~$\mathcal{R}$ restricted to a given conformal class are Yamabe metrics, while  free critical points 
are Einstein metrics. An Einstein metric is said to be {\em strictly stable} if the second variation of~$\mathcal{R}$ is 
negative-definite in the $tt$-variations of the metric (see \eqref{eq:R-parte-tt} and the discussion before it):  standard spheres and their quotients 
are well-known to be stable, and other examples are discussed in 
Remark \ref{rem:intro}~(a).

\

Before stating our result, we clarify our assumption on the metric near conical points, which is of generic type:  
\begin{itemize}
		\item[$(H_P)$] If $P$ is a conical point with link $Y$ and $s$ is the geodesic distance 
	from   $P$, we assume that for $s$ small 
	the metric is of the type 
	\begin{equation}\label{eq:g-assumption}
		g = ds^2 + s^2 h(s), 
	\end{equation}
	where $h(s)$ is a one-parameter family of metrics on $Y$ that depends regularly 
	on $s$, see the comments at the beginning of Subsection \ref{ss:variations} for 
 the precise condition. We denote by $\xi$ the first-order term in $s$ in the expansion 
	of $h(s)$, namely: 
	\begin{equation}\label{eq:expansion-h}
		h(s) =  h_0 + s \xi + O(s^2). 
	\end{equation}
\end{itemize}

 As for \cite{Aubin-76-2}, we aim to find an asymptotic expansion of the 
 Yamabe quotient that depends on conformally invariant quantities, so it is 
 relevant for us to understand the behavior of the expansion in \eqref{eq:expansion-h} 
 under conformal changes of metric. Referring to Section \ref{s:exp-I} for details, 
 we introduce the condition 
\begin{equation}\label{s:xi-conf} \tag{$\xi_P$}
	\xi \neq \nabla_{h_0}^2 f + f h_0, \quad \text{for all smooth $f \colon Y \to \R$.} 
\end{equation}
 For suitable conformal deformations of the metric $g_0 := ds^2 + s^2 h_0$, in geodesic 
 coordinates the term $\xi$ is precisely of the form $\nabla^2_{h_0} f + f h_0$, 
 with $f$ depending on the conformal factor, 
 and hence condition \eqref{s:xi-conf} {\em distinguishes at first order 
 our metric from conformally deformed purely conical ones}, see 
 \eqref{eq:transf-xi} and the preceding discussion.

\smallskip

This is our main result.

\begin{theorem}\label{t:main}
 	 Let $(M^n, \bar{g})$, $n \geq 4$, be  compact  	 
 	   with finitely-many conical points $\{P_1, \dots, P_l\}$ 
 	 such that for all $i$ the metric $\bar{g}$ satisfies $(H_{P_i})$
    with $\mathrm{Ric}(h_{i,0}) = (n-2) h_{i,0}$. 
 	 Suppose there is a conical point $P$ whose link $(Y,h_0) \neq (S^{n-1}, g_{S^{n-1}})$ is strictly stable, satisfies $(\xi_P)$ and for which 
 	 \begin{equation}\label{eq:P-min}
 	 	 	   \mathcal{Y}_P = \min_{i = 1, \dots, l} \mathcal{Y}_{P_i}.  
 	 \end{equation}
 	 Then $(M,\bar{g})$ admits a Yamabe metric. 
 \end{theorem}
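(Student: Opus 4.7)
The plan is to reduce the theorem to verifying the strict inequality \eqref{eq:sing-quot} at the distinguished point $P$: since \eqref{eq:P-min} gives $\mathcal{Y}_P = \min_i \mathcal{Y}_{P_i}$, once $\mathcal{Y}(M,[\bar{g}]) < \mathcal{Y}_P$ is established the existence of a Yamabe metric follows from the results of \cite{Ak-Bo} and \cite{ACM} recalled in the excerpt. To produce a test function violating $\mathcal{Y}_P$, I would work in geodesic-polar coordinates near $P$, where the model cone is $C(Y) = ((0,\infty)\times Y, g_0 := ds^2 + s^2 h_0)$. On $C(Y)$, the Einstein condition $\mathrm{Ric}(h_0) = (n-2)h_0$ together with the volume formula recalled after \eqref{eq:YYPP} ensures that $\mathcal{Y}_P$ is attained by a radial profile $U(s)$ analogous to \eqref{eq:U-def}. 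I would glue a rescaled cutoff $U_\epsilon(s) \cong \epsilon^{-(n-2)/2}U(s/\epsilon)$ into a neighborhood of $P$ and augment it with an implicitly-defined, lower-order correction $\phi_\epsilon$ supported near $P$, producing the trial function $u_\epsilon = U_\epsilon + \phi_\epsilon$.

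The core step is to expand $Q_{\bar{g}}(u_\epsilon)$ as $\epsilon \to 0$. Since $g = ds^2 + s^2(h_0 + s\xi + O(s^2))$, the deviation from $g_0$ first appears at order $s$ through $\xi$, and this drives the leading correction to $Q_{\bar{g}}(u_\epsilon) - \mathcal{Y}_P$. The strategy is to decompose $\xi$, as a symmetric two-tensor on $Y$, into its ``conformal'' piece $\nabla^2_{h_0}f + f h_0$ and its transverse-traceless ($tt$) piece, and to perform a simultaneous Fourier decomposition of $\phi_\epsilon$ against the link's Laplacian eigenmodes. The correction $\phi_\epsilon$ is chosen to cancel the conformal contribution, which carries no definite sign (it merely reflects a conformal reparametrization of the cone, in the spirit of the discussion around \eqref{eq:transf-xi}); the necessary invertibility on the conformal-type modes is provided by the Einstein property of $h_0$. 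What remains is a quadratic contribution from the $tt$-part of $\xi$, which by $(\xi_P)$ is nontrivial and which is strictly negative by the strict stability hypothesis on $(Y,h_0)$ in the sense of \eqref{eq:R-parte-tt}. Consequently $Q_{\bar{g}}(u_\epsilon) < \mathcal{Y}_P$ for $\epsilon$ small, closing the argument.

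The main obstacle is the careful orchestration of this expansion: the correction $\phi_\epsilon$ must be constructed implicitly, mode-by-mode on $Y$, so that every term whose sign is ambiguous is cancelled and only the strictly negative $tt$-contribution remains isolated at the appropriate order in $\epsilon$. A related subtlety is the dimensional threshold $n \geq 4$, weaker than Aubin's $n \geq 6$ in the regular case: this reflects the fact that the perturbation $s\xi$ enters the metric at order $s^1$ rather than at the Weyl-tensor order $s^2$, so integrals of $U_\epsilon^2$ against $\xi$ dominate already in lower dimension, explaining the \emph{different} condition on $n$ advertised in the abstract. Finally, the assumption $(Y,h_0) \neq (S^{n-1}, g_{S^{n-1}})$ is used to guarantee $\mathcal{Y}_P < \mathcal{Y}(S^n,[g_{S^n}])$ strictly, so that bubbling at $P$ is energetically preferred over bubbling at any regular point of $M$ and concentrating the test function at $P$ is indeed the right choice.
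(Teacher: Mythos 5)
Your overall skeleton — reduce to $\mathcal{Y}(M,[\bar g])<\mathcal{Y}_P$ via \cite{ACM}, glue a rescaled radial bubble at $P$, add an implicit first-order correction, and expand $Q_{\bar g}$ — is the same as the paper's. But the heart of the argument, namely why the $\eps^2$-coefficient is strictly negative under $(\xi_P)$, is not what you describe, and as stated your sign analysis would fail.

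You claim the correction cancels the conformal contribution and that ``what remains is a quadratic contribution from the $tt$-part of $\xi$, which by $(\xi_P)$ is nontrivial and strictly negative by strict stability.'' This is wrong on two counts. First, $(\xi_P)$ only excludes $\xi=\nabla^2_{h_0}f+fh_0$; in the decomposition $\xi=fh_0+\nabla^2 G+\delta^*\alpha+\eta$ of \eqref{eq:decomposition-xi} it does \emph{not} force $\eta\neq 0$ — for instance $\xi=\nabla^2 G$ with $G\neq f$ satisfies $(\xi_P)$ with vanishing $tt$-part, and there your argument produces nothing negative. Second, the paper's quadratic form \eqref{eq:expansion-5} is not a clean cancellation plus a $tt$-remainder: the explicit part $\Psi_1$ of the correction, the conformal part of $\mathcal{R}''(h_0)$, and the algebraic term $\int_Y((\mathrm{tr}\,\xi)^2-|\xi|^2)$ combine into a quantity bounded \emph{above} by $\tfrac{n-2}{4}\omega\int_Y|\nabla(f-G)|^2$ (see \eqref{eq:simplify-quadratic-1}), and negativity is then forced by showing that the implicit part $\Psi_2$ contributes \emph{strictly more} than $\tfrac{n-2}{4}\omega\int_Y|\nabla(f-G)|^2$ with the opposite sign. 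That step is the real content: it requires the Fourier decomposition of $\Psi_2$ on the link, the monotonicity in $\lambda$ of the quantities $\beta(\lambda)$ of Proposition \ref{prop:monotonicity-beta} (itself resting on the weighted Poincar\'e inequality of Lemma \ref{lemma:better-constant}), and the Lichnerowicz--Obata spectral gap $\lambda_1>n-1$, which is exactly where $(Y,h_0)\neq(S^{n-1},g_{S^{n-1}})$ enters — not, as you suggest, merely to compare $\mathcal{Y}_P$ with $\mathcal{Y}(S^n)$. Strict stability is used only to kill the genuinely $tt$ directions; it cannot carry the whole argument.

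A second gap: your explanation of the threshold $n\geq 4$ ignores that the constant $\omega=\int_0^\infty U^2r^{n-1}\,dr$ in the $\eps^2$-coefficient diverges when $n=4$. The paper must treat $n=4$ separately (Subsection \ref{ss:n=4}), replacing the implicit correction by an explicit harmonic ansatz $\widehat\Psi_2=e(r)H(y)$ on the cone and extracting an extra $\log(\delta/\eps)$ factor in front of a different quadratic form $J(\xi)$. Without this your expansion simply does not close in dimension four.
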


Before discussing the proof, some comments  are in order. 

\begin{remark} \label{rem:intro}
	(a) The result applies to manifolds with finitely-many 
	 conical points  of orbifold type with structure as in $(H_P)$, in which case our assumptions are generic. 
	Study of the stability of Einstein metrics and other examples of conformally stable Einstein metrics  can be  found e.g. in 
	\cite{Cao-He}, \cite{Koi},  \cite{Kro1} and \cite{Sem-We}. 
	 
	 (b) The condition $\mathrm{Ric}(h_0) = (n-2) h_0$ on the links (or, more in general, that their scalar curvature satisfies $R_{h_0} = (n-1)(n-2)$) implies that the scalar curvature of the 
	 (purely) conical metric $ds^2 + s^2  h_0$ vanishes identically, but for metrics 
	 as in $(H_P)$ it might diverge at rate $1/s$ near $P$,
	 see Section \ref{s:exp-I}. This makes anyway $R_{\bar{g}}$ in $L^p(M,g)$ 
	 for any $p < n$. Therefore, our manifolds satisfy assumptions i)-iv) 
  in Section 1 of \cite{ACM}: in fact, our proof applies with minor changes to their  
	 class as well,  provided there is a conical point $P$ as 
	 above for which $\mathcal{Y}_P = \min_{Q \in S} Y_Q$, where $S$ stands  for the 
	 set of singular points in $M$. In \cite{ACM} (see also \cite{Bo-Pre}) the regularity of Yamabe conformal 
	 factors is studied as well, and for our case it is of H\"older type near the 
	 conical points, see Proposition \ref{p:holder}.

	(c) Our assumption on the dimension differs from that in \cite{Aubin-76-2}: 
	this fact is due to the singular structure of the metric, which generates lower-order 
	corrections in the expansion of the Yamabe quotient. 
	
	(d) Conical metrics as in \eqref{eq:g-assumption} are conformal to cylindrical ones, so 
	our result could be restated for manifolds with cylindrical ends under proper conditions 
	on the asymptotic behavior of the metric. Results about existence, rigidity  and on the Yamabe 
	quotient for such manifolds can be found in e.g \cite{Ak-Bo}, \cite{Ak-Bo-L2}. 
	
	(e) In the non-existence example  \cite[Thoerem 1.3]{Vi} one has a single orbifold point with $\xi = 0$, 
	and hence our result does not apply to this case.  See also \cite{Ju-Vi} 
 for a related scalar curvature prescription problem. 
\end{remark}

The strategy of the proof relies as in \cite{Aubin-76-2} and \cite{Schoen-84} 
on finding a competing conformal factor that guarantees \eqref{eq:sing-quot}, 
which by the assumption on $P$ amounts to 
showing  $\mathcal{Y}(M,[\bar{g}]) < \mathcal{Y}_P$: however, some crucial differences occur due to the 
singular structure of the metric.

As noticed in \cite{Lee-Parker-87}, 
in the regular case the use of normal coordinates, and in particular {\em conformal normal coordinates} (i.e. normal coordinates after 
a conformal change of metric), 
is extremely useful in order to work out the expansion of the Yamabe quotient 
for highly concentrated functions. In fact, for such coordinates the volume element near the concentration point 
can be made flat at an arbitrary order, as well as for the scalar curvature up to first order. 
Concerning second-order expansion of $R_g$ instead, one has that the squared norm of the Weyl 
tensor appears as minus the Laplacian of $R_g$, showing that for some $C_n > 0$
$$
  Q_g(\bar{U}_\eps) = \mathcal{Y}(S^n,[{g_{S^n}}]) - C_n \eps^4 {|W_g|^2} + o(\eps^{4}), \quad 
  \text{as $\eps \to 0$,}  
$$
see the notation after \eqref{eq:U-def}, with an extra logarithmic term for $n=6$ that appears in front of the second term on the right-hand side. 

In our case, regarding the function $\bar{U}_\eps$ as an approximate solution of the 
Yamabe equation, we have less precision due to the presence of first-order terms in $s$ 
appearing in~\eqref{eq:expansion-h}, affecting the expansion of $Q_{\bar{g}}$ in a 
relevant way. 
As a result, we need to add a properly chosen correction $\eps \Psi_\eps$ to $\bar{U}_\eps$ (see~\eqref{eq:equation-Psi}-\eqref{eq:L1copy}) with the same dilation 
parameter but of order $\eps$ in the $W^{1,2}$-norm. While this correction is not needed for regular metrics in conformal normal coordinates, 
for us in generic situations it is \underline{non-zero}, it \underline{cannot be eliminated via conformal changes} (see Remark \ref{r:annull-phi1}), and it \underline{crucially 
contributes to the expansion of the Yamabe quotient}. Referring to 
Proposition \ref{p:exp-full} for details, if $\xi$ is as in \eqref{eq:expansion-h} we find that 
\begin{multline} \label{eq:2nd}
    Q_{\bar{g}}(\bar{U}_\eps + \eps \Psi_\eps) = \mathcal{Y}_P \\ + \eps^2 \bigg( 
    \frac{\omega}{2} \mathrm{Vol}_{h_0}(Y)^{\frac{n-3}{n-1}} \mathcal{R}''(h_0)[\xi,\xi]
    + \frac{\omega}{4} \int_Y \Big( (\mathrm{tr} \xi )^2 - \abs{\xi}^2 \Big) \, d \mu_{h_0} 
    - \int_{\mathcal{C}(Y)} \Psi \widetilde{\lp} \Psi \, d\mu_{g_0} \bigg)
    + o(\eps^2), 
\end{multline}
where $\widetilde{\lp}$ is the linearized Yamabe operator at $\bar{U}$ (see~\eqref{eq:V-ker-L}), 
$\omega$ is a positive dimensional constant, and an extra factor of order $|\log \eps|$ in front of 
the second-last term when $n = 4$. Here $\mathcal{C}(Y)$ stands for the metric cone over $Y$, with metric 
$g_0$  given after \eqref{s:xi-conf}, see also Subsection~\ref{ss:FAYQ}.

Another difficulty we encounter at this point is that, while the correction $\Psi$ can be found by abstract 
means, we cannot generically write  $\Psi$ in closed form. More precisely, using the standard decomposition of the metric variation $\xi$ 
into its conformal, Lie-derivative and $tt$ parts, we 
can isolate the corresponding contributions by $\Psi$ in~\eqref{eq:2nd}. 
In doing so, we suitably split  $\Psi = \Psi_1 + \Psi_2$, with only $\Psi_1$ explicit (see the discussion in Subsection~\ref{ss:role}), 
and we use for the contribution of $\Psi_2$ in \eqref{eq:2nd} a Fourier decomposition in the link. 
Using a monotonicity property of certain integral quantities with 
respect to the eigenvalues of the Laplacian on the link, we  show that the sum of the terms  within  round brackets 
of~\eqref{eq:2nd} is indeed negative under our assumptions.

The four-dimensional case has to be treated separately, since it leads to diverging integrals, with an extra logarithmic term in the scaling parameter $\eps$. In order to treat this issue,  we replace the correction $\Psi_2$ with a solution of the Laplace 
equation on the limit cone instead of the linearized Yamabe equation. While the accuracy of the resulting approximate solution $\bar{U}_\eps + \eps \Psi_\eps$ gets worse  near the conical tip, it is preserved at scales between $\eps$ and $1$, which is sufficient for us to estimate the logarithmic divergences.

\begin{remark} 
(a) Let the functions $f, G : Y \to \R$ be as in~\eqref{eq:decomposition-xi}, arising from the standard decomposition of the symmetric two-tensor $\xi$. The same conclusion of the main theorem holds when  $(Y,h_0) = (S^{n-1}, g_{S^{n-1}})$ and $f - G$ is not a first spherical harmonic of $S^{n-1}$.

	(b) It would be worth understanding whether, for unstable Einstein metrics on the conical links, there might be cases in which the quadratic form in~\eqref{eq:2nd} is not negative-definite. This might possibly lead to other examples for which $\mathcal{Y}(M,[\bar{g}])$ is not attained, as in \cite{Ak-Mon}.

 (c) It would be interesting to perform expansions of the Yamabe quotient for other types of metric singularities, and whether there are general criteria to control the Yamabe constant from above to ensure compactness of minimizing sequences. 
\end{remark}

\

The plan of the paper is the following. In Section \ref{s:prel} we collect some preliminary notation and results. We need in particular the standard decomposition 
of symmetric two-tensors, variations of interesting geometric quantities as well as 
some conformal aspects. In Section \ref{s:exp-I} we display the second 
variation of the Yamabe quotient at a conical metric and in particular the role of the 
first-order correction $\Psi$ to the standard bubble. 
In Section  \ref{s:exp-II} we derive rigorously the second order expansion of the Yamabe quotient, 
introducing some cut-off functions that allow to localize the estimates near 
a given conical point. The four-dimensional case is treated separately since it 
leads to logarithmically diverging quantities, and more care is needed.

\medspace \medspace \medspace 

\begin{center}
	{\bf Acknowledgments} 
\end{center}

\noindent 
A.M. is supported by the project {\em Geometric problems with loss of compactness} from Scuola Normale Superiore
and by the PRIN Project 2022AKNSE4 {\em Variational and Analytical aspects of Geometric PDEs}.
 Both authors are members of GNAMPA, as part of INdAM. We are grateful to K. Akutagawa, K. Kr\"oncke and J. Viaclovsky for pointing out to us some useful references.

\section{Some preliminary facts}\label{s:prel}

In this section we collect some notation, basic material 
on tensor decompositions, variations of geometric quantities with 
respect to the metric, functional-analytic properties and a 
characterization of the Yamabe constant for some purely conical manifolds. 
We also introduce our conventions on curvatures and differential operators, 
which might differ from some of the existing literature. 

\smallskip

Let $(M,g)$ be a Riemannian manifold. We define the Riemann curvature tensor $\mathrm{Riem}(g)$ (as a $(3,1)$-tensor) with the following sign convention:
\begin{equation}
    \label{eq:Riemann-tensor}
    \mathrm{Riem}(g)(X,Y,Z) = \nabla_X \nabla_Y Z - \nabla_Y \nabla_X Z - \nabla_{[X,Y]} Z.
\end{equation}
Moreover, we define the Ricci tensor $\mathrm{Ric}(g)$ as the trace of $\mathrm{Riem}(g)$ with respect to the first argument, and the scalar curvature $R_g$ as the trace of $\mathrm{Ric}(g)$ with respect to the metric $g$ itself. In the sequel, we will omit the dependence on the metric $g$ if it is clear from the context.

In local coordinates the previous quantities have the well known expressions:
\begin{gather}
    \label{eq:coord-riemm}
    \mathrm{Riem}\indices{_{ijk}^l}
    =
    \frac{\partial \mathrm{\Gamma}_{jk}^l}{\partial x_i}
    -
    \frac{\partial \mathrm{\Gamma}_{ik}^l}{\partial x_j}
    +
    \mathrm{\Gamma}_{jk}^m \mathrm{\Gamma}_{im}^l
    -
    \mathrm{\Gamma}_{ik}^m \mathrm{\Gamma}_{jm}^l,
    \quad
    \mathrm{\Gamma}_{ij}^k
    =
    \frac{1}{2} g^{k l} \bigg(
    \frac{\partial g_{jl}}{\partial x_i} + \frac{\partial g_{li}}{\partial x_j}
    - \frac{\partial g_{ij}}{\partial x_l} \bigg) \\[1ex]
    \label{eq:coord-ricci-scal}
    \mathrm{Ric}_{ij} = \mathrm{Riem}\indices{_{kij}^k}, 
    \quad
    R_g = g^{ij} \mathrm{Ric}_{ij}.
\end{gather}

We also need the definition of the divergence of a tensor field. If $T$ is an $(r,s)$-tensor field, then the divergence $\delta T$ of $T$ is the $(r,s-1)$-tensor field obtained as the trace of $\nabla T$ on the first two covariant indices with respect to the metric $g$. 

In local coordinates this amounts to
\begin{equation}
    \label{eq:divergence-T}
    (\delta T)_{j_1 \dots j_{s-1}}^{i_1 \dots i_r} = g^{\alpha \beta} (\nabla T)_{\alpha \beta j_1 \dots j_{s-1}}^{i_1 \dots i_r}.
\end{equation}
Moreover, we define the rough Laplacian $\Delta T$ of $T$ as the $(r,s)$-tensor field given by
\begin{equation}
    \label{eq:Laplacian-tensor}
    \Delta T = \delta (\nabla T).
\end{equation}
In particular, for a function $f$ we have the following formula
\begin{equation}
    \label{eq:Laplacian-coord}
    \Delta f = \delta (d f) = \frac{1}{\sqrt{\mathrm{det}(g)}} \frac{\partial}{\partial x_i} \bigg( \frac{\partial f}{\partial x_j} g^{ij} \sqrt{\mathrm{det}(g)} \bigg).
\end{equation}

The metric $g$ induces naturally a scalar product on the space of $(r,s)$-tensors, that from now on we will denote by the symbol $\langle T,S \rangle_g$, or simply by $\langle T,S \rangle$ when the metric $g$ is clear from the context. Additionally, the metric $g$ also induces a measure $\mu_g$ on $M$ (in coordinates it takes the form of $\sqrt{\mathrm{det}(g)}$ times the Lebesgue measure). The key property of the divergence operator introduced above is that it is the formal adjoint, with respect to the scalar product of $L^2(M, d\mu_g)$, of the covariant derivative. More precisely, if $M$ is compact and without boundary, $T$ is an $(r,s)$-tensor field and $S$ is an $(r,s+1)$-tensor field, then
\begin{equation}
    \label{eq:by-parts-div}
    \int_M \langle \nabla T, S \rangle \, d\mu_g = - \int_M \langle T, \delta S \rangle \, d \mu_g.
\end{equation}

We now consider the case of symmetric $(0,2)$-tensors more in detail. If we restrict the domain of the divergence operator to these tensors, then its range is contained in the space of $1$-forms. We introduce the operator $\delta^*$ as the formal adjoint of $\delta$, with respect to $L^2(M, d\mu_g)$, between these two spaces. More precisely, given a $1$-form $\omega$ we have
\begin{equation}
    \label{eq:delta-star}
    \delta^* \omega (X,Y) = - \frac{1}{2} \big( \nabla \omega (X,Y) + \nabla \omega(Y,X) \big).
\end{equation}
For the sake of clarity we remark here that although both $\delta^*$ and $-\nabla$ are adjoints of $\delta$, they do not coincide in the space of $1$-forms because the former takes values in the space of symmetric $(0,2)$-tensors, while the latter takes values in the space of \emph{all} tensors of type $(0,2)$.

We recall also the definition of the Einstein operator $\Delta_E$ on symmetric $(0,2)$-tensors. In particular, if $\xi$ is a symmetric $(0,2)$-tensor we define  $\Delta_E \xi$ by the formula
    \begin{equation}
        \label{eq:einstein-operator}
        \Delta_E \xi = \Delta \xi + 2 A_g(\xi),
    \end{equation}
where $A_g(\xi)$ is the symmetric $(0,2)$-tensor defined by
    \begin{equation}
        \label{eq:AA-operator}
        A_g(\xi)(X,Y) := \mathrm{tr} \big( (Z,W) \mapsto \xi(\mathrm{Riem}(g)(Z,X,Y), W) \big),
    \end{equation}
which in local coordinates reduces to
    \begin{equation}
        \label{eq:AA-operator-coord}
        A_g(\xi)_{ij} = g^{\alpha \beta} \xi_{l \beta} \mathrm{Riem}\indices{_{\alpha ij}^l}.
    \end{equation}
    
\subsection{Variations of geometric quantities and tensor decompositions}\label{ss:variations}

We now collect some useful formulas for the variations of certain geometric quantities depending on a smooth family of metrics $h(s)$ on a fixed closed manifold $Y$. 
We start by specifying our assumptions on the dependence of 
$h(s)$ in $s$. Since $Y$ is compact, one can introduce a $C^2$-norm on the symmetric two-tensor fields $\Gamma(\mathcal{S}^2(Y))$ on $Y$ by taking the supremum of the $C^2$-norms of the  
coefficients for a finite atlas on $Y$. While such quantity would depend on the 
choice of charts, all these norms would be equivalent. We then assume 
that the map $s \mapsto h(s)$ is of class $C^3$ from a closed interval $[0,\delta]$ (or $[0,+\infty)$) to $\Gamma(\mathcal{S}^2(Y))$ endowed with one of these equivalent $C^2$-norms. This condition, which might be possibly slightly relaxed,
will be used in the following expansions and also in the estimates of Lemma \ref{l:errori-precisi}. 

\begin{proposition}
    \label{prop:espansione-operatori-2}
    Let $Y$ be a closed $(n-1)$-manifold and let $h \colon [0,+\infty) \to \mathrm{Met}_2(Y)$ be a function of class $C^3$, where $\mathrm{Met}_2(Y)$ denotes the space of $C^2$-metrics on $Y$. Then the following formulas for the 
    volume form and the scalar curvature of $(Y,h(s))$ hold:
    \begin{align}
        \label{eq:measure-2}
        & \frac{d}{ds} \mu_h = \frac{1}{2} \big( \mathrm{tr}_h h' \big) \mu_h, \\[1.5ex]
        \label{eq:measure-3}
        & \frac{d}{ds} \mathrm{tr}_h h' = - \abs{h'}^2_h + \mathrm{tr}_h h'', \\[1.5ex]
        \label{eq:scal-derivata-prima}
        & \frac{d}{ds} R_h = - \Delta_h \mathrm{tr}_h h' + \delta^2_h h' - \big\langle \mathrm{Ric}(h), h' \big\rangle_h,
    \end{align}
    where in the previous formulas all the quantities are implicitly evaluated at $s$.
    
    Moreover, if $h_0:=h(0)$ is an Einstein metric with Einstein constant equal to $(n-2) \sigma$, and if we set $\xi:=h'(0)$ and $\eta:=h''(0)$, then there exists a $1$-form $\beta=\beta(\xi)$ such that
    \begin{equation} \label{eq:der-scalar-2}
        \frac{d^2}{ds^2} R_h |_{s=0}
        =
        \langle B_{h_0}(\xi) , \xi \rangle
        -
        \abs{\delta \xi}^2
        -
        \frac{1}{2} \abs{d(\mathrm{tr} \xi)}^2
        +
        \frac{3}{2} \langle \delta \xi ,  d (\mathrm{tr} \xi) \rangle
        +
        \mathsf{d} R_{h_0}[\eta]
        +
        \delta \beta,
    \end{equation}
    where we have set
    \begin{align}
    \label{eq:B-op}
    B_{h_0}(\xi) & := \frac{1}{2} \Delta_E \xi + \frac{3}{2} \nabla^2 (\mathrm{tr} \xi) + 2 \delta^*(\delta \xi)
    +
    (n-2) \sigma \xi, \\[1.5ex]
    \label{eq:scalar-differential}
    \mathsf{d} R_{h_0}[\eta] & :=
    - \Delta \mathrm{tr} \hspace{0.02cm} \eta + \delta^2 \eta - \big\langle \mathrm{Ric}(h_0), \eta \big\rangle
    =
    - \Delta \mathrm{tr} \hspace{0.02cm} \eta + \delta^2 \eta - (n-2) \sigma \mathrm{tr} \hspace{0.02cm} \eta,
    \end{align}
    and where all the scalar products, the norms and the differential operators appearing in the previous formulas are implicitly taken with respect to the metric $h_0$.
\end{proposition}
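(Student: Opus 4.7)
The plan is to verify each formula by direct computation, relying on standard variational identities in Riemannian geometry. For \eqref{eq:measure-2} I would work in local coordinates: from $\mu_h = \sqrt{\det h(s)}\, dx$ and Jacobi's formula $\tfrac{d}{ds}\det h = \det h \cdot \mathrm{tr}(h^{-1} h')$, one obtains $\tfrac{d}{ds}\mu_h = \tfrac{1}{2}(\mathrm{tr}_h h')\mu_h$. For \eqref{eq:measure-3}, I would write $\mathrm{tr}_h h' = h^{ij} h'_{ij}$ and differentiate using $\tfrac{d}{ds} h^{ij} = -h^{ik}h^{jl} h'_{kl}$; the two resulting contributions assemble exactly into $-|h'|^2_h + \mathrm{tr}_h h''$.

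For the first variation of the scalar curvature \eqref{eq:scal-derivata-prima} I would invoke the Lichnerowicz formula: fixing $s_0$ and using normal coordinates for $h(s_0)$ at $p$, I differentiate the coordinate expressions \eqref{eq:coord-riemm}--\eqref{eq:coord-ricci-scal} in $s$ to obtain $\tfrac{d}{ds}\mathrm{Ric}_{ij}\big|_{s_0} = \tfrac{1}{2}\bigl(\nabla_k\nabla_i {h'}^k{}_j + \nabla_k\nabla_j {h'}^k{}_i - \nabla_i\nabla_j (\mathrm{tr}_h h') - \Delta_h h'_{ij}\bigr)$ modulo curvature terms that cancel after the trace. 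Combining this with the contribution $(h^{ij})'\,\mathrm{Ric}_{ij} = -\langle \mathrm{Ric}(h), h'\rangle_h$ coming from the variation of the metric that raises the Ricci index then yields the stated identity.

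For the second variation \eqref{eq:der-scalar-2} the strategy is to differentiate \eqref{eq:scal-derivata-prima} once more and evaluate at $s=0$ under the Einstein condition $\mathrm{Ric}(h_0)=(n-2)\sigma h_0$. Each summand of \eqref{eq:scal-derivata-prima} is differentiated separately: the term $-\Delta_h \mathrm{tr}_h h'$ produces $-\Delta_{h_0}\mathrm{tr}\,\eta$ plus contributions from $\tfrac{d}{ds}\Delta_h$ acting on $\mathrm{tr}\,\xi$ and from \eqref{eq:measure-3} applied to $\mathrm{tr}_h h'$; the term $\delta^2_h h'$ yields $\delta^2_{h_0}\eta$ plus variations of both divergence operators; and $\langle \mathrm{Ric}(h), h'\rangle_h$ produces $\langle\tfrac{d}{ds}\mathrm{Ric}(h)|_0,\xi\rangle + \langle \mathrm{Ric}(h_0),\eta\rangle$ together with the variation of the metric inside the pairing. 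The Einstein hypothesis then converts every contraction against $\mathrm{Ric}(h_0)$ into a multiple of the corresponding trace, so that all $\eta$-dependent pieces aggregate into precisely the linearization $\mathsf{d} R_{h_0}[\eta]$ recorded in \eqref{eq:scalar-differential}.

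The main obstacle is to organize the remaining $\xi$-quadratic terms into the stated operator $B_{h_0}(\xi)$ up to a total divergence $\delta\beta$. The commutators $[\nabla_i,\nabla_j]$ that appear when reordering covariant derivatives on $1$-forms and symmetric $2$-tensors generate curvature corrections which, under the Einstein condition, collapse either into the zeroth-order summand $(n-2)\sigma\,\xi$ of $B_{h_0}$ or into $A_{h_0}(\xi)$, feeding the $\tfrac{1}{2}\Delta_E\xi$ piece via \eqref{eq:einstein-operator}; the coefficients $\tfrac{3}{2}$ and $2$ in front of $\nabla^2(\mathrm{tr}\,\xi)$ and $\delta^*(\delta\xi)$ emerge when the quadratic $(h^{ij})'$-contributions combine with the variation of $\delta^2_h$. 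Every leftover term of the form $\nabla_k(\cdots)^k$ that does not pair naturally with $\xi$ is absorbed into $\delta\beta$; since all $\eta$-pieces were already assembled into $\mathsf{d} R_{h_0}[\eta]$, the resulting $1$-form $\beta$ depends only on $\xi$. Careful bookkeeping of signs during these commutations and the use of the second Bianchi identity to rewrite $\nabla\mathrm{Ric}$-terms as divergences are the delicate points, but no further conceptual ingredient is needed.
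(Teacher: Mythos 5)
Your overall route---direct computation of the variations---is sound, and it is worth noting that the paper itself does not prove this proposition at all: it cites \cite{Besse} for the three first-order formulas and Lemma~2.2 of \cite{Da-Kro} for the second variation \eqref{eq:der-scalar-2}, adding only a remark reconciling sign conventions and the relation between the Lichnerowicz Laplacian and $\Delta_E$ in the Einstein case. Your treatment of \eqref{eq:measure-2}, \eqref{eq:measure-3} and \eqref{eq:scal-derivata-prima} is correct and complete enough: Jacobi's formula, the variation of the inverse metric, and the trace of the standard linearization of Ricci in normal coordinates do give exactly the stated identities with the paper's convention $\delta T = \operatorname{tr}_{12}\nabla T$ (no minus sign). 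A cleaner way to organize the $\eta$-terms in the second part, which you essentially use, is the chain rule on the space of metrics: $\tfrac{d^2}{ds^2}R_{h}|_{0} = \mathsf{d}R_{h_0}[\eta] + \mathsf{d}^2R_{h_0}[\xi,\xi]$, so that all dependence on $h''(0)$ is automatically the linearization applied to $\eta$ and only the $\xi$-quadratic part needs to be computed.

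The genuine gap is in that $\xi$-quadratic part. The entire content of \eqref{eq:der-scalar-2} is the precise list of coefficients --- $\tfrac12\Delta_E\xi$, $\tfrac32\nabla^2(\mathrm{tr}\,\xi)$, $2\delta^*(\delta\xi)$, $(n-2)\sigma\xi$ inside $B_{h_0}$, together with $-\abs{\delta\xi}^2$, $-\tfrac12\abs{d(\mathrm{tr}\,\xi)}^2$ and $\tfrac32\langle\delta\xi,d(\mathrm{tr}\,\xi)\rangle$ --- and your proposal asserts that these ``emerge'' from the bookkeeping without deriving a single one of them. Since these constants feed directly into \eqref{eq:L2U}, \eqref{eq:II-simplified} and ultimately the sign of the second-order coefficient in the Yamabe expansion, a proof that does not pin them down has not established the statement; as written this is a plan for a computation rather than the computation. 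Two concrete places where the bookkeeping is delicate and your sketch is silent: (i) the variation $\tfrac{d}{ds}\Delta_h$ acting on $\mathrm{tr}_h h'$ and the variation of the two divergences in $\delta_h^2 h'$ each involve the variation of the Christoffel symbols, $\dot\Gamma_{ij}^k = \tfrac12 h^{kl}(\nabla_i\xi_{jl}+\nabla_j\xi_{il}-\nabla_l\xi_{ij})$, and it is precisely the contractions of these against $\xi$ and $\nabla\xi$ that produce the fractional coefficients; (ii) the paper's $\delta$, $\Delta$ and $\Delta_E$ differ by a sign from those of \cite{Da-Kro} (as the authors themselves warn), so importing formulas from the literature without tracking this will silently flip the signs of $\delta^2\xi$- and $\delta^*(\delta\xi)$-type terms. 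Either carry out the contraction explicitly in normal coordinates at a point, or reduce to the known second variation of the total scalar curvature and identify the divergence $\delta\beta$; as it stands the key identity is assumed, not proved.
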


A proof of the first part of the previous proposition can be found in~\cite{Besse}, while a proof of the second part can be found in Lemma~2.2 in~\cite{Da-Kro}. We point out that our sign convention for the Riemann curvature tensor is the same as in~\cite{Da-Kro} but our definition of the divergence operator differs from their definition by a minus sign, and therefore the same goes for the definition of the rough Laplacian and the Einstein operator. We notice also that in the statement of Lemma~2.2 in~\cite{Da-Kro} the Einstein operator does not appear explicitly, where it appears instead the Lichnerowicz Laplacian, but in the Einstein case, as remarked by the authors before the proof, these two operators differ by two times the Einstein constant times the identity operator.

The following proposition becomes a corollary of the previous one, after an integration by parts.

\begin{proposition}
    \label{prop:second-var-EH}
    Let $(Y,h_0)$ be a closed Einstein $(n-1)$-manifold with Einstein constant equal to $(n-2) \sigma$.
    Let $\xi$ be a symmetric $(0,2)$-tensor such that
        \begin{equation} 
            \int_Y \mathrm{tr} \xi \, d\mu_{h_0} = 0.
        \end{equation}
    Then, the second variation of the Einstein-Hilbert functional $\mathcal{R}$ defined by~\eqref{eq:def-EH} at $h_0$ in the direction $\xi$ is equal to
    \begin{multline}
        \label{eq:second-var-EH-simplified}
        \mathcal{R}''(h_0)[\xi] = \mathrm{Vol}_{h_0}(Y)^{-\frac{n-3}{n-1}}
        \int_Y
        \Big( -\frac{1}{2} \abs*{\nabla \xi}^2 
        + \abs*{\delta \xi}^2
        + \frac{1}{2} \abs*{\nabla \mathrm{tr}\xi}^2 \\[1ex]
        - \langle \delta \xi , d(\mathrm{tr}\xi) \rangle
        - \frac{(n-2)\sigma}{2} (\mathrm{tr}\xi)^2
        + \langle A_{h_0}(\xi), \xi \rangle \Big)
        \, d\mu_{h_0},
    \end{multline}
    where in the previous formula $A_{h_0}$ is the operator defined by~\eqref{eq:AA-operator}.
\end{proposition}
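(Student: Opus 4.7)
Since the excerpt tells me that this proposition is a corollary of Proposition \ref{prop:espansione-operatori-2} obtained by integration by parts, my plan is to compute $\frac{d^2}{ds^2}\mathcal{R}(h(s))|_{s=0}$ directly from the first-derivative identities~\eqref{eq:measure-2}--\eqref{eq:scal-derivata-prima}, the second-derivative identity~\eqref{eq:der-scalar-2}, and then integrate by parts on the closed manifold $Y$. Write $\mathcal{R}(h(s)) = V(s)^{-\alpha} S(s)$ with $\alpha = \frac{n-3}{n-1}$, $V(s) = \mathrm{Vol}_{h(s)}(Y)$ and $S(s) = \int_Y R_{h(s)}\,d\mu_{h(s)}$, and set $\xi = h'(0)$, $\eta = h''(0)$. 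Using~\eqref{eq:measure-2} and the Einstein condition $R_{h_0} = (n-1)(n-2)\sigma$, together with the divergence theorem applied to $-\Delta\,\mathrm{tr}\,\xi + \delta^2\xi$, I find
\[
V'(0) = \tfrac{1}{2}\!\int_Y \mathrm{tr}\,\xi\, d\mu_{h_0},\qquad
S'(0) = \tfrac{(n-3)(n-2)\sigma}{2}\!\int_Y \mathrm{tr}\,\xi\, d\mu_{h_0},
\]
so that the constraint $\int_Y \mathrm{tr}\,\xi\, d\mu_{h_0} = 0$ forces $V'(0) = S'(0) = 0$. This makes the cross term in the product rule vanish and yields
\[
\mathcal{R}''(h_0)[\xi] = V(0)^{-\alpha}\bigl(S''(0) - (n-3)(n-2)\sigma\, V''(0)\bigr).
\]

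Next, I would compute $V''(0)$ and $S''(0)$ explicitly. Differentiating~\eqref{eq:measure-2} once more and using~\eqref{eq:measure-3} gives $V''(0) = \tfrac{1}{4}\!\int(\mathrm{tr}\,\xi)^2 - \tfrac{1}{2}\!\int|\xi|^2 + \tfrac{1}{2}\!\int\mathrm{tr}\,\eta$. For $S''(0)$, differentiating $S(s) = \int_Y R_h\,d\mu_h$ twice and evaluating at $s=0$ produces three contributions: $\int R''(h_0)[\xi]$ (from~\eqref{eq:der-scalar-2}), the cross term $\int R'(h_0)[\xi]\cdot\mathrm{tr}\,\xi$, and $R_{h_0}$ times the second variation of the volume form. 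The divergence $\int_Y \delta\beta\,d\mu_{h_0} = 0$ drops, and the term $\int\mathsf{d}R_{h_0}[\eta]$ reduces to $-(n-2)\sigma\int\mathrm{tr}\,\eta$. A direct bookkeeping check then shows that all $\eta$-contributions from $S''(0)$, $R_{h_0}\rho''(0)$ and the volume correction $-(n-3)(n-2)\sigma V''(0)$ cancel — as they must, since the second variation at a constrained critical point depends only on the tangent vector $\xi$.

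The remaining step is to expand $\int\langle B_{h_0}(\xi),\xi\rangle$ using~\eqref{eq:B-op} and the integration-by-parts rule~\eqref{eq:by-parts-div} together with its consequence $\int\langle\delta^*\omega,\xi\rangle = \int\langle\omega,\delta\xi\rangle$. This converts $\tfrac12\langle\Delta_E\xi,\xi\rangle$ into $-\tfrac12|\nabla\xi|^2 + \langle A_{h_0}(\xi),\xi\rangle$, the Hessian term $\tfrac32\langle\nabla^2\mathrm{tr}\,\xi,\xi\rangle$ into $-\tfrac32\langle d\,\mathrm{tr}\,\xi,\delta\xi\rangle$, and $2\langle\delta^*\delta\xi,\xi\rangle$ into $2|\delta\xi|^2$. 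Similarly $\int R'(h_0)[\xi]\mathrm{tr}\,\xi$ becomes $\int|d\,\mathrm{tr}\,\xi|^2 - \int\langle\delta\xi,d\,\mathrm{tr}\,\xi\rangle - (n-2)\sigma\!\int(\mathrm{tr}\,\xi)^2$. Collecting coefficients of $|\nabla\xi|^2$, $|\delta\xi|^2$, $|d\,\mathrm{tr}\,\xi|^2$, $\langle\delta\xi,d\,\mathrm{tr}\,\xi\rangle$, $(\mathrm{tr}\,\xi)^2$, $|\xi|^2$ and $\langle A_{h_0}(\xi),\xi\rangle$ should recover exactly the integrand of~\eqref{eq:second-var-EH-simplified}; in particular the $|\xi|^2$ terms coming from $B_{h_0}$ and from the $R_{h_0}\rho''(0)$ / volume expansions must cancel.

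The main obstacle is purely combinatorial: getting every sign right in the integration by parts (the paper's divergence convention differs from some references by a sign, which already affects $\Delta_E$ and~\eqref{eq:by-parts-div}) and verifying two key cancellations — the vanishing of all $\eta$-dependent contributions, and the vanishing of the net $|\xi|^2$ coefficient — which are forced by general principles (the tangent-only dependence of the second variation at a critical point, and the scale invariance of the normalized functional) but must be confirmed by the arithmetic. Once these cancellations are checked, the formula~\eqref{eq:second-var-EH-simplified} drops out with no further work.
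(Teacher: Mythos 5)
Your proposal is correct and follows exactly the route the paper intends: the paper offers only the one-line justification that Proposition~\ref{prop:second-var-EH} is a corollary of Proposition~\ref{prop:espansione-operatori-2} after integration by parts, and your computation carries that out, with the right product-rule reduction $\mathcal{R}''=V^{-\alpha}(S''-(n-3)(n-2)\sigma V'')$ under the zero-mean constraint, the correct integration by parts of $B_{h_0}(\xi)$ and of the cross term, and the two cancellations (of all $\eta$-terms and of the net $|\xi|^2$ coefficient) that I have verified do occur with the paper's sign conventions. Nothing is missing.
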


We remark here that the set of critical metrics for $\mathcal{R}$ coincides with the set of Einstein metrics on $Y$, therefore if $h_0$ is an Einstein metric, then the second variation of $\mathcal{R}$ is well-defined at $h_0$ and depends only on the direction $\xi$.

At this point, we want to understand better the action on the space $\Gamma(\mathcal{S}^2(Y))$ of symmetric $(0,2)$-tensor fields of some of the differential operators introduced in the previous propositions. To this aim it is useful to recall the following well known decomposition, for a proof see~\cite{Viaclovsky-2}, Lecture~3. 

\begin{lemma}
    \label{lemma:decomposition-tensor}
    Let $(Y,h_0)$ be a closed Riemannian $(n-1)$-manifold and let $ \delta^* (\Omega^1(Y))$ be the set of symmetric $(0,2)$-tensor fields of the form $\delta^* \omega$ for some $\omega \in \Omega^1(Y)$, where $\Omega^1(Y)$ denotes the set of $1$-forms on $Y$. Then, we have
    \begin{equation}
        \label{eq:decomposition-tensor}
        \Gamma(\mathcal{S}^2(Y)) = \{ f \cdot h_0 \} + \delta^* (\Omega^1(Y)) \oplus \{ \delta \xi = \mathrm{tr} \xi = 0 \}.
    \end{equation}
    Moreover, if $h_0$ is an Einstein metric, then the previous decomposition is a direct sum and it is orthogonal with respect to $\mathcal{R}''(h_0)$, unless $(Y,h_0)$ is isometric to the $(n-1)$-dimensional sphere endowed with its standard metric.
\end{lemma}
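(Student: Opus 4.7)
The plan is to realize the decomposition as an $L^{2}$-orthogonal splitting, then use the Einstein structure to upgrade it to a direct sum that is also orthogonal with respect to $\mathcal{R}''(h_0)$.

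First I would identify the TT part as the $L^{2}$-orthogonal complement of $W := \{f h_0\} + \delta^{*}(\Omega^{1}(Y))$. Using \eqref{eq:by-parts-div}, one has $\langle \xi, f h_0 \rangle_{L^{2}} = \int_Y f\,\mathrm{tr}\,\xi \, d\mu_{h_0}$ and $\langle \xi, \delta^{*}\omega \rangle_{L^{2}} = -\langle \delta\xi, \omega \rangle_{L^{2}}$, so $W^{\perp}$ coincides exactly with $\{\delta\xi = \mathrm{tr}\,\xi = 0\}$. To obtain a smooth splitting I would seek $(f,\omega)$ such that $\xi - f h_0 - \delta^{*}\omega$ is TT. The trace condition forces $f = \tfrac{1}{n-1}(\mathrm{tr}\,\xi + \delta\omega)$, and the divergence condition then reduces to
\[
\Bigl(\delta \delta^{*} + \tfrac{1}{n-1}\, d\,\delta\Bigr)\omega \;=\; \delta\xi \;-\; \tfrac{1}{n-1}\, d\,\mathrm{tr}\,\xi
\]
on $\omega \in \Omega^{1}(Y)$. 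A principal-symbol computation, decomposing a covector into its parallel and orthogonal parts relative to a direction, shows that the operator on the left is self-adjoint and elliptic on $\Omega^{1}(Y)$ for $n\ge 3$, with kernel consisting of the $1$-forms that are simultaneously Killing and coclosed. The right-hand side is $L^{2}$-orthogonal to this kernel by \eqref{eq:by-parts-div}, so Fredholm theory on the closed manifold $Y$ yields a smooth solution $\omega$, and hence the decomposition.

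For the directness of $\{f h_0\} + \delta^{*}(\Omega^{1}(Y))$ when $(Y,h_0)$ is Einstein and not isometric to $(S^{n-1},g_{S^{n-1}})$, I would take an intersection element, $\delta^{*}\omega = f h_0$. Tracing gives $f = -\tfrac{1}{n-1}\delta\omega$, while the identity $\mathcal{L}_{\omega^{\sharp}} h_0 = -2\,\delta^{*}\omega$ says that $\omega^{\sharp}$ is a conformal Killing field. On an Einstein manifold, standard commutation formulas then produce from $\omega$ a smooth function $\varphi$ (proportional to $\delta\omega$) satisfying a Hessian equation $\nabla^{2}\varphi + c\,\varphi\,h_0 = 0$ with $c>0$ when the Einstein constant is positive; Obata's rigidity theorem forces either $\varphi \equiv 0$ or $(Y,h_0) \cong (S^{n-1},g_{S^{n-1}})$. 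In the non-positive Einstein case the Bochner identity gives the same conclusion more directly, as conformal Killing fields are then Killing. Under our hypothesis we thus find $\delta\omega \equiv 0$, hence $f \equiv 0$, and the sum is direct.

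To prove orthogonality with respect to $\mathcal{R}''(h_0)$, I would treat the two kinds of cross-terms separately. For any cross-term involving a Lie-derivative piece $\delta^{*}\omega$, I would invoke the diffeomorphism invariance of $\mathcal{R}$: if $\phi_{s}$ denotes the flow of $X := \omega^{\sharp}$ and $\eta$ is an arbitrary variation, the two-parameter family $h_{s,t} := \phi_{s}^{*}(h_0 + t\,\eta)$ satisfies $\mathcal{R}(h_{s,t}) = \mathcal{R}(h_0 + t\,\eta)$, and taking $\partial_{s}\partial_{t}$ at $(0,0)$ together with $\mathcal{R}'(h_0) = 0$ on the volume-preserving constraint (which holds since $h_0$ is Einstein) yields $\mathcal{R}''(h_0)[\mathcal{L}_{X} h_0, \eta] = 0$ for every $\eta$. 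For the remaining cross-term between a conformal variation $f\,h_0$ and a TT variation $\eta$, I would plug $\xi = f\,h_0 + \eta$ directly into \eqref{eq:second-var-EH-simplified}: a term-by-term check, using $\mathrm{tr}\,\eta = 0$, $\delta\eta = 0$, the Einstein identity $A_{h_0}(f\,h_0) = f\,\mathrm{Ric}(h_0) = (n-2)\sigma\,f\,h_0$ and its consequence $\mathrm{tr}\,A_{h_0}(\eta) = \langle \mathrm{Ric}(h_0), \eta\rangle = 0$, shows that each integrand in \eqref{eq:second-var-EH-simplified} splits into a pure $f$-contribution plus a pure $\eta$-contribution, with no mixed piece; polarization then gives $\mathcal{R}''(h_0)[f\,h_0, \eta] = 0$. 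The main obstacle is the directness step: it genuinely uses Einstein rigidity via Obata's theorem, and is precisely where the round-sphere exception appears.
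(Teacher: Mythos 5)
Your argument reconstructs, along the same lines, the standard proof that the paper itself does not spell out but delegates to Lecture~3 of \cite{Viaclovsky-2}: the York-type splitting obtained by solving an elliptic equation for $\omega$, Obata's rigidity theorem for the directness of $\{f h_0\} + \delta^*(\Omega^1(Y))$, diffeomorphism invariance of $\mathcal{R}$ for the Lie-derivative cross-terms, and a term-by-term check in \eqref{eq:second-var-EH-simplified} for the conformal/TT cross-term. The overall structure is sound and matches the cited argument.

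One step is justified incorrectly, although the conclusion survives. The kernel of $P := \delta\delta^* + \tfrac{1}{n-1}\,d\,\delta$ is not the set of $1$-forms that are Killing and coclosed: pairing $P\omega$ with $\omega$ and integrating by parts via \eqref{eq:by-parts-div} gives $\int_Y \bigl( \abs{\delta^*\omega}^2 - \tfrac{1}{n-1}(\delta\omega)^2 \bigr)\,d\mu_{h_0}$, whose (pointwise nonnegative) integrand vanishes exactly when the trace-free part of $\delta^*\omega$ vanishes; hence $\ker P$ is the space of \emph{conformal} Killing $1$-forms. On the round sphere — where the first assertion of the lemma must still hold, since the decomposition is claimed for every closed $(Y,h_0)$ — this space is strictly larger than the space of Killing forms (gradients of first spherical harmonics are conformal Killing, not Killing, and not coclosed). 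The Fredholm step therefore requires orthogonality of $\delta\xi - \tfrac{1}{n-1}\,d\,\mathrm{tr}\,\xi$ to every conformal Killing $\omega_0$. This is still true, but the two terms do not vanish separately: using $\delta^*\omega_0 = -\tfrac{\delta\omega_0}{n-1}\,h_0$ one finds $\int_Y \langle \delta\xi, \omega_0\rangle\, d\mu_{h_0} = -\tfrac{1}{n-1}\int_Y \mathrm{tr}\,\xi\;\delta\omega_0 \, d\mu_{h_0}$, which exactly cancels the second term. With that correction the existence part is complete. A further cosmetic point: \eqref{eq:second-var-EH-simplified} is stated under $\int_Y \mathrm{tr}\,\xi\, d\mu_{h_0}=0$, so in the conformal/TT cross-term you should first subtract the mean of $f$; the constant direction is harmless because $\mathcal{R}$ is scale-invariant, so $\mathcal{R}''(h_0)[h_0,\cdot]=0$.
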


Notice that $\delta^* (\Omega^1(Y))$ in formula \eqref{eq:decomposition-tensor} coincides with the family of Lie derivatives of $h_0$ along vector fields over $Y$. The set of symmetric $(0,2)$-tensor fields that belong to the third addend in~\eqref{eq:decomposition-tensor} is usually called the set of transverse-traceless tensors or the set of $tt$-tensors. Clearly, the class of transverse-traceless tensors depends on the background metric $h_0$.

Now, if $h_0$ is an Einstein metric and $\eta$ is a $tt$-tensor, then identity~\eqref{eq:second-var-EH-simplified} reduces to
    \[
         \mathcal{R}''(h_0)[\eta] = -\mathrm{Vol}_{h_0}(Y)^{-\frac{n-3}{n-1}}
        \int_Y
        \Big(\frac{1}{2} \abs*{\nabla \eta}^2 
        - \langle A_{h_0}(\eta), \eta \rangle \Big)
        \, d\mu_{h_0}.
    \]
Recall that  $(Y,h_0)$ is said to be a strictly stable Einstein manifold if for any $tt$-tensor $\eta$ one has 
\begin{equation}
    \label{eq:R-parte-tt}
    \mathcal{R}''(h_0)[\eta, \eta] \le 0 \ \text{with equality if and only if $\eta=0$}.
\end{equation}

Moreover, for tensors that are multiples with zero average of the metric (see e.g.~Proposition~2.1 in~\cite{Viaclovsky-2}), there holds 
\begin{equation}
    \label{eq:R-parte-conforme}
    \mathcal{R}''(h_0)[f \cdot h_0,f \cdot h_0] = \frac{(n-2)(n-3)}{2} \mathrm{Vol}_{h_0}(Y)^{-\frac{n-3}{n-1}} \int_Y \Big( \abs{\nabla f}^2 - (n-1) f^2 \Big) \, d \mu_{h_0}.
\end{equation}
Furthermore, given the invariance by diffeomorphisms action, we also have that 
\begin{equation}
    \label{eq:R-parte-Lie}
    \mathcal{R}''(h_0)[\tau, \tau] = 0 \ \text{ for all $\tau \in \delta^* (\Omega^1(Y))$}, 
\end{equation}
see e.g. the proof of Proposition~3.4 in~\cite{Viaclovsky-2}.

Using Hodge's decomposition, see e.g.~Theorem 6.8 in~\cite{Warner}, and 
Bochner's formula for positive-Ricci metrics one find that every 
element $\omega$ of $\Omega^1(Y)$ writes as 
\[
\omega = - d G + \alpha, \ \text{with $G: Y \to \mathbb{R}$ and $\delta \alpha = 0$}. 
\]
Using this information, together with the previous lemma, we deduce that
\begin{equation}
    \label{eq:decomposition-xi}
    \xi = f h_0 + \nabla^2 G + \delta^*\alpha + \eta,
\end{equation}
where $G$ is a smooth function with zero average, $\delta \alpha = 0$, and $\eta$ is a transverse-traceless tensor with respect to $h_0$. We point out here that $\mathrm{tr} \hspace{0.02cm} \delta^* \alpha = - \delta \alpha = 0$.

Next, the following proposition will be useful. 
\begin{proposition}
    \label{prop:div-hess}
    Let $(Y,h_0)$ be a closed Einstein $(n-1)$-manifold with Einstein constant $(n-2) \sigma$. Let $G$ and $\omega$ be respectively a function and a $1$-form on $Y$. Then, one has:
        \begin{equation} \label{eq:one-div}
            \delta (\nabla^2 G) = d \big( \Delta G + (n-2) \sigma G \big), \qquad \delta (\delta^* \omega) = - \frac{1}{2} d (\delta \omega) - \frac{(n-2) \sigma}{2} \omega - \frac{1}{2} \Delta \omega,
        \end{equation}
    and also
        \begin{equation} \label{eq:div-hess2}
            \delta^2 (\nabla^2 G) = \Delta^2 G + (n-2) \sigma \Delta G, \qquad \delta^2 (\delta^* \omega) = - \Delta (\delta \omega) 
            - \frac{(n-2) \sigma}{2} \delta \omega. 
        \end{equation}
\end{proposition}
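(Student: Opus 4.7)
The plan is to prove all four identities as direct tensor calculations, relying on two ingredients: the Ricci identity, which converts commutators of covariant derivatives into Riemann-tensor contractions, and the Einstein hypothesis $\mathrm{Ric}(h_0) = (n-2)\sigma\, h_0$, which reduces those contractions to scalar multiples of the metric and, through $\nabla\mathrm{Ric} = 0$ and the contracted second Bianchi identity, eliminates any ``differentiated'' curvature terms.

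For the first identity in~\eqref{eq:one-div}, I start from $(\delta(\nabla^2 G))_j = g^{\alpha\beta}\nabla_\alpha\nabla_\beta\nabla_j G$, use the symmetry $\nabla_\beta\nabla_j G = \nabla_j\nabla_\beta G$ of the Hessian, and commute $\nabla_\alpha$ with $\nabla_j$ acting on the $1$-form $dG$. The commutator produces a term of the form $-\mathrm{Riem}\indices{_{\alpha j\beta}^l}\nabla_l G$; tracing with $g^{\alpha\beta}$ and using the standard Riemann symmetries reduces this to $\mathrm{Ric}_j^{\,l}\nabla_l G$, which the Einstein condition converts to $(n-2)\sigma\,\nabla_j G$. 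Together with the ``commuted-out'' divergence $\nabla_j\Delta G$ this yields the claim. The second identity of~\eqref{eq:one-div} is handled by the same technique: one writes $(\delta(\delta^*\omega))_j = -\tfrac12 g^{\alpha\beta}(\nabla_\alpha\nabla_\beta\omega_j + \nabla_\alpha\nabla_j\omega_\beta)$, identifies the first summand with $-\tfrac12\Delta\omega_j$, and in the second commutes $\nabla_\alpha$ with $\nabla_j$ through the $1$-form $\omega$ to produce $-\tfrac12\nabla_j(\delta\omega)$ plus a Ricci remainder proportional to $\omega_j$.

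The first identity of~\eqref{eq:div-hess2} is then immediate by applying $\delta$ to the first identity of~\eqref{eq:one-div} and using $\delta\circ d = \Delta$ on functions. The second identity of~\eqref{eq:div-hess2} is the most delicate step: applying $\delta$ to the formula just proved for $\delta(\delta^*\omega)$ produces a term $\delta(\Delta\omega)$ that does not a priori coincide with $\Delta(\delta\omega)$. My plan is to establish separately the commutation
\begin{equation*}
\delta(\Delta\omega) \,=\, \Delta(\delta\omega) \,+\, (n-2)\sigma\,\delta\omega,
\end{equation*}
valid on any Einstein $(n-1)$-manifold with Einstein constant $(n-2)\sigma$. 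This can be proved directly by reordering the three nested covariant derivatives in $g^{jk}g^{\alpha\beta}\nabla_k\nabla_\alpha\nabla_\beta\omega_j$ into the order $g^{jk}g^{\alpha\beta}\nabla_\alpha\nabla_\beta\nabla_k\omega_j$ defining $\Delta(\delta\omega)$: the algebraic Riemann contractions thus produced collapse to Ricci terms via the first Bianchi identity and the trace symmetries, while the ``differentiated'' Riemann contractions reduce via the contracted second Bianchi identity $\nabla^a\mathrm{Riem}_{abcd} = \nabla_c\mathrm{Ric}_{bd} - \nabla_d\mathrm{Ric}_{bc}$, which vanishes in the Einstein setting. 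A slicker alternative route is through the Weitzenb\"ock formula $\Delta\omega = \Delta_H\omega + \mathrm{Ric}(\omega)$, combined with $\delta^2 = 0$ applied to the $2$-form $d\omega$. Substituting the commutation back into the expression obtained by applying $\delta$ to the formula for $\delta(\delta^*\omega)$ then yields the second identity of~\eqref{eq:div-hess2}. The main technical obstacle is the careful sign bookkeeping of the various Riemann traces (for instance, that $g^{\alpha\beta}\mathrm{Riem}\indices{_{\alpha j\beta}^l} = -\mathrm{Ric}_j^{\,l}$ with this paper's sign conventions), but once these identifications are correctly made, the Einstein hypothesis collapses every curvature contribution to a scalar multiple of $\delta\omega$.
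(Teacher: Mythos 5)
Your overall strategy coincides with the paper's: the authors prove all four identities from the single commutation (Ricci) rule for second covariant derivatives of $1$-forms, applied to $\omega$ and to $dG$, together with the Einstein condition. Your treatment of the first identity in \eqref{eq:one-div}, of the second identity in \eqref{eq:one-div}, and of the first identity in \eqref{eq:div-hess2} (via $\delta\circ d=\Delta$ on functions, which is the paper's definition \eqref{eq:Laplacian-coord}) is sound, and your auxiliary commutation
\begin{equation*}
\delta(\Delta\omega)=\Delta(\delta\omega)+(n-2)\sigma\,\delta\omega
\end{equation*}
on an Einstein manifold is correct; it can be checked either by your Bianchi computation or more quickly from the Weitzenb\"ock formula together with the fact that the Hodge Laplacian commutes with the codifferential.

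The gap is in the very last step. Substituting your commutation into $\delta$ applied to the second identity of \eqref{eq:one-div} gives
\begin{equation*}
\delta^2(\delta^*\omega)=-\tfrac12\Delta(\delta\omega)-\tfrac{(n-2)\sigma}{2}\,\delta\omega-\tfrac12\big(\Delta(\delta\omega)+(n-2)\sigma\,\delta\omega\big)=-\Delta(\delta\omega)-(n-2)\sigma\,\delta\omega,
\end{equation*}
which is \emph{not} the second identity of \eqref{eq:div-hess2} as stated: the zeroth-order coefficient there is $-\tfrac{(n-2)\sigma}{2}$, not $-(n-2)\sigma$. You assert that the substitution ``yields'' the stated formula without performing this arithmetic, and it does not. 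In fact a direct consistency test with $\omega=dG$ (for which $\delta^*(dG)=-\nabla^2G$, so $\delta^2(\delta^*dG)=-\delta^2(\nabla^2G)=-\Delta^2G-(n-2)\sigma\Delta G$ by the first identity of \eqref{eq:div-hess2}) shows that the coefficient your computation produces is the one compatible with the other three identities, while the printed coefficient $\tfrac{(n-2)\sigma}{2}$ is not. So the discrepancy points to a typo in the statement rather than to an error in your commutation lemma --- note that in the paper the identity is only ever applied to the co-closed form $\alpha$ of \eqref{eq:decomposition-xi}, where $\delta\omega=0$ and the coefficient is immaterial --- but as written your proof claims to derive a formula that your own intermediate steps contradict. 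You should either flag the inconsistency explicitly or carry the substitution through and state the corrected coefficient.
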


\begin{proof}
The four identities are a consequence of the following commutation rule for 
$1$-forms:  
\[
(\nabla^2 \omega)_{jpq} = ( \nabla^2 \omega)_{pjq} - 
\mathrm{Riem}\indices{_{pjq}^l} \omega_l, 
\]
which can be also applied to differentials of functions on $Y$.  
\end{proof}

\subsection{Some computations in the conical setting}

We specialize here to conical spaces of the type 
\[
\mathcal{C}(Y) : = [0,+\infty) \times Y, 
\]
with $\{0\} \times Y$ identified to a point, endowed with a metric $g$ as in~\eqref{eq:g-assumption}.  
For such spaces, we compute the expressions of the volume form, the Laplacian and the scalar curvature. 

\begin{proposition}
    \label{prop:espansione-operatori-1}
    Let $Y$ be a closed $(n-1)$-manifold and let $h \colon [0,+\infty) \to \mathrm{Met}_2(Y)$ be a function of class $C^3$, where $\mathrm{Met}_2(Y)$ denotes the space of $C^2$-metrics on $Y$. Let $g$ be the metric on $\mathcal{C}(Y)$ defined as $ds^2 + s^2 h(s)$. Then the following formulas hold:
    \begin{gather}
        \label{eq:measure-1}
        \mu_g = s^{n-1} \mu_h, \\[1ex]
        \label{eq:Laplacian-1}
        \Delta_g = \partial_{ss} + \frac{n-1}{s} \partial_s + \frac{1}{2} \big( \mathrm{tr}_h h' \big) \partial_s + \frac{\Delta_h}{s^2}, \\[1ex]
        \label{eq:curv-scal-1}
        R_g(s,y) = \frac{R_h(y)-(n-1)(n-2)}{s^2} - \frac{n}{s} \mathrm{tr}_{h} h'
        + \frac{3}{4} \abs{h'}^2_{h} - \frac{1}{4} ( \mathrm{tr}_{h} h' )^2 - \mathrm{tr}_{h} h'',
    \end{gather}
    where in the previous formulas all the quantities are implicitly evaluated at $s$.
\end{proposition}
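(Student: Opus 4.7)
The plan is to work in coordinates $(s,y)$ on $\mathcal{C}(Y)$, in which the matrix of $g$ is block-diagonal with $g_{ss}=1$, $g_{si}=0$ and $g_{ij}=s^2 h_{ij}(s,y)$. The three identities then fall out of three independent pieces of the calculus of warped-type metrics: the determinant, the coordinate Laplacian formula, and the Gauss–Riccati identity for the scalar curvature of a Riemannian submersion onto $[0,+\infty)$. I expect the volume form and the Laplacian to be essentially immediate, while the scalar curvature will require the only real bookkeeping.

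First I would establish \eqref{eq:measure-1}: the block structure gives $\det g = s^{2(n-1)} \det h(s)$, hence $\sqrt{\det g}\,ds\wedge dy = s^{n-1}\sqrt{\det h}\,ds\wedge dy = s^{n-1}\mu_h$, which is the claim. For \eqref{eq:Laplacian-1} I would then plug $g^{ss}=1$, $g^{ij}=s^{-2}h^{ij}$ and $\sqrt{\det g}=s^{n-1}\sqrt{\det h}$ into \eqref{eq:Laplacian-coord}. The radial piece $\partial_s\bigl(s^{n-1}\sqrt{\det h}\,\partial_s f\bigr)/(s^{n-1}\sqrt{\det h})$ expands to $\partial_{ss}f+\tfrac{n-1}{s}\partial_s f+\tfrac{1}{2}(\mathrm{tr}_h h')\partial_s f$ using the Jacobi formula $\partial_s \sqrt{\det h}=\tfrac{1}{2}\sqrt{\det h}\,\mathrm{tr}_h h'$ (which is already \eqref{eq:measure-2}); the tangential piece reproduces $\Delta_h f/s^2$ directly from the same coordinate formula applied on the slice $(Y,h(s))$.

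The only nontrivial step is \eqref{eq:curv-scal-1}. Viewing $g=ds^2+g_s$ as a one-parameter family of metrics $g_s:=s^2 h(s)$ on $Y$, the slices $\{s\}\times Y$ are hypersurfaces with second fundamental form $k_s=\tfrac{1}{2}g_s'$ and shape operator $S_s=g_s^{-1}k_s$. A direct computation gives
\begin{align}
S_s &= \tfrac{1}{s}\mathrm{Id}+\tfrac{1}{2}h^{-1}h', & H_s &= \tfrac{n-1}{s}+\tfrac{1}{2}\mathrm{tr}_h h', \\
|k_s|^2_{g_s} &= \mathrm{tr}(S_s^2) = \tfrac{n-1}{s^2}+\tfrac{1}{s}\mathrm{tr}_h h'+\tfrac{1}{4}|h'|^2_h,
\end{align}
and the Gauss identity for a hypersurface with unit normal $\partial_s$, combined with the Riccati equation $\mathrm{Ric}_g(\partial_s,\partial_s)=-\partial_s H_s-|k_s|^2_{g_s}$, yields
\begin{equation}
R_g = R_{g_s}-2\partial_s H_s - |k_s|^2_{g_s} - H_s^2.
\end{equation}
Since $g_s=s^2 h(s)$ is a conformal rescaling of $h(s)$ by a constant on $Y$, one has $R_{g_s}=s^{-2}R_{h(s)}$. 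Substituting this together with the expressions above, and rewriting $\partial_s H_s=-\tfrac{n-1}{s^2}+\tfrac{1}{2}\partial_s(\mathrm{tr}_h h')$ via \eqref{eq:measure-3}, the $s^{-2}$ coefficients combine to $(R_h-(n-1)(n-2))/s^2$, the $s^{-1}$ coefficients to $-n\,\mathrm{tr}_h h'/s$, and the remaining terms to $\tfrac{3}{4}|h'|^2_h-\tfrac{1}{4}(\mathrm{tr}_h h')^2-\mathrm{tr}_h h''$, which is \eqref{eq:curv-scal-1}.

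The main obstacle is simply the algebraic collection of terms in the scalar-curvature step: several cancellations among $H_s^2$, $|k_s|^2_{g_s}$ and $\partial_s H_s$ have to conspire to produce the correct coefficient $-(n-1)(n-2)$ in the leading $s^{-2}$ term (and the coefficient $-n$, not $-(n-1)$, in front of $\mathrm{tr}_h h'/s$). Everything else is a matter of applying standard coordinate formulas to the block-diagonal metric.
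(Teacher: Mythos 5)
Your proposal is correct. The first two identities are established exactly as in the paper (block determinant for the volume form, substitution into \eqref{eq:Laplacian-coord} together with \eqref{eq:measure-2} for the Laplacian), but for the scalar curvature you take a genuinely different route. The paper computes the Christoffel symbols \eqref{eq:C1}--\eqref{eq:C4} and then assembles $R_g = A + B + C + D$ term by term from the coordinate expression of the Riemann tensor, which is a longer but fully self-contained bookkeeping exercise. You instead invoke the twice-contracted Gauss equation together with the traced Riccati equation for the foliation by level sets of $s$, reducing everything to the shape operator $S_s = \tfrac{1}{s}\mathrm{Id} + \tfrac{1}{2}h^{-1}h'$ of the slices. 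I checked the bookkeeping: with $\partial_s H_s = -\tfrac{n-1}{s^2} + \tfrac{1}{2}\big(-|h'|^2_h + \mathrm{tr}_h h''\big)$ from \eqref{eq:measure-3}, the identity $R_g = R_{g_s} - 2\partial_s H_s - |k_s|^2_{g_s} - H_s^2$ does produce the coefficients $R_h - (n-1)(n-2)$, $-n$, and $\tfrac34|h'|^2 - \tfrac14(\mathrm{tr}_h h')^2 - \mathrm{tr}_h h''$ as claimed. Your approach is shorter and more conceptual, at the cost of importing the Gauss and Riccati identities (whose sign conventions must be matched to the paper's curvature conventions in \eqref{eq:Riemann-tensor}--\eqref{eq:coord-ricci-scal}); the paper's brute-force computation buys independence from those external formulas and also produces the individual Christoffel symbols, which are reused implicitly elsewhere in the expansion arguments.
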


\begin{proof}
    The validity of~\eqref{eq:measure-1} follows immediately from the definition of volume form and the structure of $g$, in particular this means that
        \begin{equation}
            \label{eq:determinanti-rel}
            \sqrt{\mathrm{det}(g)}
            =
            s^{n-1} \sqrt{\mathrm{det}(h)},
        \end{equation}
    where in the previous expression $h$ is implicitly computed at $s$.

    In local coordinates, for any $i,j = 2,\dots,n$, we have
    \begin{equation}
        \label{eq:metric-coeff}
        g_{11} = 1, \quad g_{1j} = 0, \quad g_{ij} = s^2 h_{i-1 j-1}
        \quad \text{and} \quad
        g^{11} = 1, \quad g^{1j} = 0, \quad g^{ij} = s^{-2} h^{i-1 j-1}.
    \end{equation}
    Therefore, from~\eqref{eq:Laplacian-coord},~\eqref{eq:determinanti-rel} and~\eqref{eq:metric-coeff} we obtain
    \begin{align*} 
        \notag
        \Delta_g f & = \frac{1}{s^{n-1} \sqrt{\mathrm{det}(h)}} \frac{\partial}{\partial s} \bigg( \frac{\partial f}{\partial s} s^{n-1} \sqrt{\mathrm{det}(h)} \bigg)
        +
        \frac{1}{s^2} \sum_{i,j \ge 2} 
        \frac{1}{\sqrt{\mathrm{det}(h)}} \frac{\partial}{\partial x_i} \bigg( \frac{\partial f}{\partial x_j} h^{i-1 j-1}\sqrt{\mathrm{det}(h)} \bigg) \\[1.5ex]
        & =
        \frac{1}{s^{n-1}} \frac{\partial}{\partial s} \bigg( \frac{\partial f}{\partial s} s^{n-1} \bigg)
        +
        \frac{1}{\sqrt{\mathrm{det}(h)}} \bigg( \frac{\partial}{\partial s} \sqrt{\mathrm{det}(h)} \bigg) \frac{\partial f}{\partial s}
        + \frac{\Delta_h f}{s^2}.
    \end{align*}
    Combining the previous formula with the identity~\eqref{eq:measure-2} we deduce~\eqref{eq:Laplacian-1}.

    For notation convenience we introduce the shortcut $\mathrm{R}\indices{_{ijk}^l}$ for $\mathrm{Riem}\indices{_{ijk}^l}$.
    The value of the scalar curvature $R_g$ of the metric $g$ at the point $(s,y)$ can be computed in local coordinates using the formula
    \begin{equation}
        \label{eq:scalar-curv-coordinates}
        R_g := g^{ij} \mathrm{R}\indices{_{kij}^k}
        =
        \underbrace{\sum_{k=1}^n g^{11} \mathrm{R}\indices{_{k11}^k}}_{=A}
        +
        \underbrace{\sum_{j=2}^n \sum_{k=1}^n g^{1j}\mathrm{R}\indices{_{k1j}^k}}_{=B}
        +
        \underbrace{\sum_{i=2}^n \sum_{k=1}^n g^{i1} \mathrm{R}\indices{_{ki1}^k}}_{=C}
        +
        \underbrace{\sum_{i,j=2}^n \sum_{k=1}^n g^{ij} \mathrm{R}\indices{_{kij}^k}}_{=D}.
    \end{equation}
    To simplify the previous sum, we need the precise expression of the Christoffel symbols, from~\eqref{eq:coord-riemm} we easily obtain the following identities:
    \begin{align}
        \label{eq:C1}
        \mathrm{\Gamma}_{ij}^k & = 0 \ \text{if at least two indices are equal to one}, \\[1ex]
        \label{eq:C2}
        \mathrm{\Gamma}_{ij}^1 & = - s h_{i-1 j-1} -\frac{s^2}{2} h_{i-1 j-1}' \ \text{for any $i,j=2,\dots,n$}, \\[1ex]
        \label{eq:C3}
        \mathrm{\Gamma}_{1j}^k & = \frac{1}{2} \big( h^{-1} h' \big)_{k-1 j-1} + \frac{1}{s} \delta_{j}^{k} \ \text{for any $j,k=2,\dots,n$}, \\[1ex]
        \label{eq:C4}
        \mathrm{\Gamma}_{ij}^k & = \widetilde{\mathrm{\Gamma}}_{i-1 j-1}^{k-1} \ \text{for any $i,j,k=2,\dots,n$},
    \end{align}
    where $\widetilde{\mathrm{\Gamma}}_{\alpha \beta}^{\gamma}(s,y)$ denotes a generic Christoffel symbol associated to the Levi-Civita connection on $(Y,h(s))$. 
    It is clear from~\eqref{eq:metric-coeff} that $B=C=0$ (where $B$ and $C$ are two of the four addends that appear in the formula~\eqref{eq:scalar-curv-coordinates}). 
    
    We turn our attention to the term $A$. From~\eqref{eq:C1} we deduce that
    \begin{equation}
        \label{eq:A1}
        \mathrm{R}\indices{_{k11}^k}
        :=
        \frac{\partial \mathrm{\Gamma}_{11}^k}{\partial x_k}
        -
        \frac{\partial \mathrm{\Gamma}_{k1}^k}{\partial s}
        +
        \mathrm{\Gamma}_{11}^m \mathrm{\Gamma}_{km}^k
        -
        \mathrm{\Gamma}_{k1}^m \mathrm{\Gamma}_{1m}^k 
        =
        -
        \frac{\partial \mathrm{\Gamma}_{k1}^k}{\partial s}
        -
        \mathrm{\Gamma}_{k1}^m \mathrm{\Gamma}_{1m}^k.
    \end{equation}
    Combining the identity $( h^{-1} )' = - h^{-1} h' h^{-1}$ with the relation~\eqref{eq:C3} we obtain that for any $k \ge 2$ it holds
    \begin{align}
        \notag
        \frac{\partial \mathrm{\Gamma}_{k1}^k}{\partial s}
        & =
        \frac{1}{2} \frac{\partial}{\partial s} \big( h^{-1} h' \big)_{k-1 k-1} - \frac{1}{s^2} \\[1ex]
        \label{eq:A2}
        & =
        \frac{1}{2} \big( h^{-1} h'' \big)_{k-1 k-1} - \frac{1}{2} \big( h^{-1} h' h^{-1} h' \big)_{k-1 k-1} - \frac{1}{s^2}.
    \end{align}
    Moreover, from~\eqref{eq:C1} and \eqref{eq:C3} we also derive that for any $k \ge 2$ it holds
    \begin{align}
        \notag
        \mathrm{\Gamma}_{k1}^m \mathrm{\Gamma}_{1m}^k
        = & 
        \frac{1}{s^2} \sum_{m=2}^n \delta_{k}^{m} \delta_{m}^{k}
        +
        \frac{1}{4} \sum_{m=2}^n \big( h^{-1} h' \big)_{m-1 k-1} \big( h^{-1} h' \big)_{k-1 m-1} \\[1ex]
        \notag
        & + 
        \frac{1}{2s} \sum_{m=2}^n \delta_{k}^{m} \big( h^{-1} h' \big)_{k-1 m-1}
        +
        \frac{1}{2s} \sum_{m=2}^n \delta_{m}^{k} \big( h^{-1} h' \big)_{m-1 k-1} \\[1ex]
        \label{eq:C5}
        = &
        \frac{1}{s^2} + \frac{1}{4} \big( h^{-1} h' h^{-1} h' \big)_{k-1 k-1} + \frac{1}{s} \big( h^{-1} h' \big)_{k-1 k-1}.
    \end{align}
    Putting together~\eqref{eq:C1},~\eqref{eq:A1},~\eqref{eq:A2} and~\eqref{eq:C5} we obtain
    \begin{align}
        \notag
        A & = \sum_{\alpha =1}^{n-1} \frac{1}{4} \big( h^{-1} h' h^{-1} h' \big)_{\alpha \alpha} - \frac{1}{s} \big( h^{-1} h' \big)_{\alpha \alpha} - \frac{1}{2} \big( h^{-1} h'' \big)_{\alpha \alpha} \\[1ex]
        \label{eq:contributo-A}
        & =
        \frac{1}{4} \abs{h'}_h^2 - \frac{1}{s} \mathrm{tr}_h h' - \frac{1}{2} \mathrm{tr}_h h''.
    \end{align}
    We are left with the computation of the term $D$. We split $D$ in the sum of two terms,
    \begin{equation}
        \label{eq:splitting-D}
        D = \underbrace{\sum_{i,j=2}^n g^{ij} \mathrm{R}\indices{_{1ij}^1}}_{=D1}
        +
        \underbrace{\sum_{i,j,k=2}^n g^{ij} \mathrm{R}\indices{_{kij}^k}}_{=D2},
    \end{equation}
    and we start with the contribution originated from $D1$. Using~\eqref{eq:C1} once more we notice that
    \begin{equation}
        \label{eq:D1}
        \mathrm{R}\indices{_{1ij}^1}
        :=
        \frac{\partial \mathrm{\Gamma}_{ij}^1}{\partial s}
        -
        \frac{\partial \mathrm{\Gamma}_{1j}^1}{\partial x_i}
        +
        \mathrm{\Gamma}_{ij}^m \mathrm{\Gamma}_{1m}^1
        -
        \mathrm{\Gamma}_{1j}^m \mathrm{\Gamma}_{im}^1 
        =
        \frac{\partial \mathrm{\Gamma}_{ij}^1}{\partial s}
        -
        \mathrm{\Gamma}_{1j}^m \mathrm{\Gamma}_{im}^1.
    \end{equation}
    From~\eqref{eq:C2} we have
    \begin{equation}
        \label{eq:D2}
       \frac{\partial \mathrm{\Gamma}_{ij}^1}{\partial s} = - h_{i-1 j-1} - 2 s h_{i-1 j-1}' - \frac{s^2}{2} h_{i-1 j-1}''.
    \end{equation}
    Moreover, from~\eqref{eq:C1},~\eqref{eq:C2} and~\eqref{eq:C3} we also derive that
    \begin{align}
        \notag
        \mathrm{\Gamma}_{1j}^m \mathrm{\Gamma}_{im}^1 
        = &
        - \sum_{m=2}^n \delta_{j}^{m} h_{i-1 m-1}
        - \frac{s}{2} \sum_{m=2}^n \delta_{j}^{m} h_{i-1 m-1}'
        \\[1ex]
        \notag
        & - \frac{s}{2} \sum_{m=2}^n \big( h^{-1} h' \big)_{m-1 j-1} h_{i-1 m-1}
        - \frac{s^2}{4} \sum_{m=2}^n \big( h^{-1} h' \big)_{m-1 j-1} h_{i-1 m-1}' \\[1ex]
        \label{eq:C6}
        = &
        - h_{i-1 j-1} - s h_{i-1 j-1}' - \frac{s^2}{4} \big( h' h^{-1} h' \big)_{i-1 j-1}.
    \end{align}
    Therefore, putting together~\eqref{eq:metric-coeff},~\eqref{eq:D1},~\eqref{eq:D2} and~\eqref{eq:C6} we obtain
    \begin{align}
        \notag
        D1 & = \frac{1}{s^2} \sum_{\alpha, \beta = 1}^{n-1} h^{\alpha \beta} \Big( - s h_{\alpha \beta}' - \frac{s^2}{2} h_{\alpha \beta}'' + \frac{s^2}{4} \big( h' h^{-1} h' \big)_{\alpha \beta} \Big) \\[1ex]
        \notag
        & =
        - \frac{1}{s^2} \sum_{\alpha = 1}^{n-1} s \big( h^{-1} h' \big)_{\alpha \alpha} + \frac{s^2}{2} \big( h^{-1} h'' \big)_{\alpha \alpha} - \frac{s^2}{4} \big(h^{-1} h' h^{-1} h' \big)_{\alpha \alpha} \\[1ex]
        & =
        \label{eq:contributo-D1}
        - \frac{1}{s} \mathrm{tr}_h h' - \frac{1}{2} \mathrm{tr}_h h'' + \frac{1}{4} \abs{h'}_h^2. 
    \end{align}

    It  only remains the contribution of the term $D2$. We point out that if $i,j,k$ and $l$ are greater or equal than two, then we can write $\mathrm{R}\indices{_{ijk}^l}$ in terms of the coefficients $\widetilde{\mathrm{R}}\indices{_{\alpha \beta \gamma}^{\kappa}}$ of the Riemann curvature tensor on $(Y,h(s))$. More precisely
    \begin{align}
        \notag
        \mathrm{R}\indices{_{ijk}^l}
        & =
        \Bigg( \frac{\widetilde{\mathrm{\Gamma}}_{j-1 k-1 }^{l-1}}{x_{i-1}}
        -
        \frac{\widetilde{\mathrm{\Gamma}}_{i-1 k-1}^{l-1}}{x_{j-1}}
        +
        \sum_{m = 2}^n \Big(
        \widetilde{\mathrm{\Gamma}}_{j-1 k-1}^{m-1}\widetilde{\mathrm{\Gamma}}_{i-1 m-1}^{l-1}
        -
        \widetilde{\mathrm{\Gamma}}_{i-1 k-1}^{m-1} \widetilde{\mathrm{\Gamma}}_{j-1 m-1}^{l-1}
        \Big)
        \Bigg)
        +
        \mathrm{\Gamma}_{jk}^1 \mathrm{\Gamma}_{i1}^l
        -
        \mathrm{\Gamma}_{ik}^1 \mathrm{\Gamma}_{j1}^l \\[1.5ex]
        \notag
        & =
        \widetilde{\mathrm{R}}\indices{_{i-1 j-1 k-1}^{l-1}}
        +
        \mathrm{\Gamma}_{jk}^1 \mathrm{\Gamma}_{i1}^l
        -
        \mathrm{\Gamma}_{ik}^1 \mathrm{\Gamma}_{j1}^l.
    \end{align}
    In particular
    \begin{equation}
        \label{eq:decomp-Rbis}
        \mathrm{R}\indices{_{kij}^k} = \widetilde{\mathrm{R}}\indices{_{k-1 i-1 j-1}^{k-1}}
        +
        \mathrm{\Gamma}_{ij}^1 \mathrm{\Gamma}_{k1}^k
        -
        \mathrm{\Gamma}_{kj}^1 \mathrm{\Gamma}_{i1}^k.
    \end{equation}
    Moreover, using again~\eqref{eq:C2} and~\eqref{eq:C3} we have the following identities
    \begin{equation*}
        \mathrm{\Gamma}_{ij}^1 \mathrm{\Gamma}_{k1}^k
        =
        - h_{i-1 j-1} - \frac{s}{2} h_{i-1 j-1}' - \frac{s}{2} h_{i-1 j-1} \big( h^{-1} h' \big)_{k-1 k-1} - \frac{s^2}{4} h_{i-1 j-1}' \big( h^{-1} h' \big)_{k-1 k-1} 
    \end{equation*}
    and
    \begin{equation*}
        \mathrm{\Gamma}_{kj}^1 \mathrm{\Gamma}_{i1}^k
        =
        - h_{k-1 j-1} \delta_{i}^{k}
        - \frac{s}{2} h_{k-1 j-1}' \delta_{i}^{k}
        - \frac{s}{2} h_{k-1 j-1} \big( h^{-1} h' \big)_{k-1 i-1}
        - \frac{s^2}{4} h_{k-1 j-1}' \big( h^{-1} h' \big)_{k-1 i-1}.
    \end{equation*}
    From the previous relations and~\eqref{eq:metric-coeff} we obtain
    \begin{align}
        \notag
        \sum_{i,j,k = 2}^n g^{ij}
        \mathrm{\Gamma}_{ij}^1 \mathrm{\Gamma}_{k1}^k
         = &
        - \frac{1}{s^2} \sum_{\alpha, \beta, \kappa = 1}^{n-1} h^{\alpha \beta} \big(
        h_{\alpha \beta} + \frac{s}{2} h_{\alpha \beta}' \big) \\[1ex]
        \notag
        & - 
        \frac{1}{s^2} \sum_{\alpha, \beta, \kappa = 1}^{n-1} h^{\alpha \beta} \Big(
        \frac{s}{2} h_{\alpha \beta} \big( h^{-1} h' \big)_{\kappa \kappa}
        +
        \frac{s^2}{4} h_{\alpha \beta}' \big( h^{-1} h' \big)_{\kappa \kappa}
        \Big) \\[1ex]
        \label{eq:C7}
         = &
        - \frac{(n-1)^2}{s^2} - \frac{n-1}{s} \mathrm{tr}_h h' - \frac{1}{4} (\mathrm{tr}_h h')^2, 
    \end{align}
    and
    \begin{align}
        \notag
        \sum_{i,j,k = 2}^n g^{ij} 
        \mathrm{\Gamma}_{kj}^1 \mathrm{\Gamma}_{i1}^k 
        = &
        - \frac{1}{s^2} \sum_{\alpha, \beta, \kappa = 1}^{n-1} h^{\alpha \beta} \big(
        h_{\kappa \beta} \delta_{\alpha}^{\kappa}
        +
        \frac{s}{2} h_{\kappa \beta}' \delta_{\alpha}^{\kappa}
        \big) \\[1ex]
        \notag
        & -
        \frac{1}{s^2} \sum_{\alpha, \beta, \kappa = 1}^{n-1} h^{\alpha \beta} \Big(
        \frac{s}{2} h_{\kappa \beta} \big( h^{-1} h' \big)_{\kappa \alpha}
        +
        \frac{s^2}{4} h_{\kappa \beta}' \big( h^{-1} h' \big)_{\kappa \alpha}
        \Big) \\[1ex]
        \label{eq:C8}
        = &
        - \frac{n-1}{s^2} - \frac{1}{s} \mathrm{tr}_h h' 
        - \frac{1}{4} \abs{h'}_{h}^2.
    \end{align}
    Combining~\eqref{eq:metric-coeff},~\eqref{eq:decomp-Rbis},~\eqref{eq:C7} and~\eqref{eq:C8} we deduce
    \begin{align}
        \notag
        D2 & = \frac{1}{s^2}
        \sum_{\alpha, \beta, \kappa = 1}^{n-1}
        h^{\alpha \beta} \widetilde{\mathrm{R}}\indices{_{\kappa \alpha \beta}^{\kappa}}
        +
        \sum_{i,j,k = 2}^n g^{ij} \big(
        \mathrm{\Gamma}_{ij}^1 \mathrm{\Gamma}_{k1}^k
        -
        \mathrm{\Gamma}_{kj}^1 \mathrm{\Gamma}_{i1}^k \big) \\[1ex]
        \label{eq:contributo-D2}
        & =
        \frac{R_{h(s)}(y) - (n-1)(n-2)}{s^2} - \frac{n-2}{s} \mathrm{tr}_h h' - \frac{1}{4} (\mathrm{tr}_h h')^2 + \frac{1}{4} \abs{h'}_{h}^2.
    \end{align}
    Finally, the conclusion follows summing up the identities~\eqref{eq:contributo-A},~\eqref{eq:contributo-D1} and~\eqref{eq:contributo-D2}.
\end{proof}

\begin{remark}
    For a metric $g$ of the form $ds^2 + f(s,y)^2 h_0$ with a smooth function $f$ on $(0,+\infty) \times Y$, similar computations are performed also in the paper~\cite{Leung}, Section~4. 
\end{remark}

\subsection{Functional-analytic aspects and Yamabe quotient of cones} \label{ss:FAYQ}

Weak solutions of 
the Yamabe equation \eqref{eq:Y} can be found in the Sobolev space 
\[
    W^{1,2}(M,\bar{g}) := \Big\{ \text{closure of $\mathrm{Lip}(M)$ 
    with respect to $\norm{\nabla \, \cdot \, }_{L^2(d\mu_{\bar{g}})} + \norm{ \, \cdot \, }_{L^2(d\mu_{\bar{g}})}$} \Big \}. 
\]
Now, if $(M,\bar{g})$ is a compact singular Riemannian manifold with a finite number of conical points and for any of them the metric $\bar{g}$ satisfies~$(H_P)$, then one has the standard Sobolev embeddings of~$W^{1,2}(M,\bar{g})$ into~$L^p(M,d\mu_{\bar{g}})$, for $1 \leq p \leq 2n/(n-2)$, which are compact 
when~$p < 2n/(n-2)$, see e.g.~Propositions~1.6 and~2.2 in~\cite{ACM} or Proposition~1.3 in \cite{CLV}.

Under our assumptions, implying in particular that the scalar curvature 
is of class $L^p(M,d\mu_{\bar{g}})$ for any~$p \in (n/2, n)$, the Yamabe 
constant $\mathcal{Y}(M,[\bar{g}])$ is positive. For more details, we again refer the reader to Section~1 in~\cite{ACM}, that treats way more general metric structures, as discussed in 
Remark \ref{rem:intro}~$(b)$.

For the class of manifolds as in Theorem \ref{t:main}, the 
regularity of conformal factors satisfying the Yamabe equation is well
understood. We recall the following result, which consists of 
Proposition 3.1 in \cite{ACM}
(see also \cite{Ak-Bo}). 

\begin{proposition} \label{p:holder}
Let $(M,\bar{g})$ be as in Theorem~\ref{t:main}, and let $u$ be a positive solution of the Yamabe equation~\eqref{eq:Y} on the regular part of~$M$ such that~$u(x) \leq C s^{-\frac{n-2}{2}+\delta}$ for some positive constants $C$ and $\delta$, where~$s = \min_i d_{\bar{g}}(x,P_i)$. Then, locally near each conical point~$P_i$ the function~$u$ is continuous with
$\abs{u(x) - u (P_i)} = O(s^{\alpha_i})$, where~$\alpha_i$ is determined by the first non-trivial eigenvalue~$\lambda_{1,i}$ of the following operator:
    \[
        - \Delta_{h_{i,0}} + \frac{(n-2)^2}{4} \ \text{on the link $(Y_i,h_{i,0})$ at $P_i$}.
    \]
More precisely, we have~$\alpha_i = \sqrt{\lambda_{1,i}} - \frac{n-2}{2}$. 
\end{proposition}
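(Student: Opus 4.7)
The plan is to proceed in two stages: first upgrade the a priori bound $u(x)\leq Cs^{-(n-2)/2+\delta}$ to $u\in L^\infty_{\mathrm{loc}}$ around each $P_i$, and then extract the precise asymptotic rate via an indicial analysis on the conical link. For the boundedness step I would exploit the subcritical gain implicit in the hypothesis: since $u\leq Cs^{-(n-2)/2+\delta}$, the nonlinear term satisfies $u^{(n+2)/(n-2)}\leq Cs^{-(n+2)/2+p\delta}$ with $p=(n+2)/(n-2)$, which lies in $L^q(B_r(P),d\mu_{\bar g})$ for some $q>n/2$ provided $r$ is small. Combined with $R_{\bar g}\in L^p$ for $p<n$ (Remark \ref{rem:intro}(b)) and the Sobolev embedding from Subsection \ref{ss:FAYQ}, a Moser iteration carried out on rescaled annuli $\{s\sim 2^{-k}r\}$ around $P$, where uniform Sobolev constants are available, upgrades $u$ to $L^\infty$ in a neighborhood of $P$.

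For the asymptotic rate, work in the geodesic chart $(s,y)\in(0,R)\times Y$ in which $(H_P)$ gives $\bar g=ds^2+s^2 h(s)$. By Proposition \ref{prop:espansione-operatori-1} the conformal Laplacian decomposes as $\mathcal{L}_{\bar g}=\mathcal{L}_{g_0}+\mathcal{E}$, where $g_0=ds^2+s^2 h_0$ is the pure cone and the coefficients of $\mathcal{E}$ vanish at $s=0$. Let $\{\phi_k\}_{k\geq 0}$ be an $L^2(Y,h_0)$-orthonormal basis with $-\Delta_{h_0}\phi_k=\mu_k\phi_k$ and $0=\mu_0<\mu_1\leq\mu_2\leq\dots$, and expand $u(s,y)=\sum_k u_k(s)\phi_k(y)$. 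Projection onto the $k$-th mode yields an Euler-type ODE with model operator $-a\bigl(\partial_{ss}+\tfrac{n-1}{s}\partial_s-\tfrac{\mu_k}{s^2}\bigr)$, whose indicial equation $\alpha^2+(n-2)\alpha-\mu_k=0$ has roots
\[
\alpha_k^{\pm}=-\tfrac{n-2}{2}\pm\sqrt{\mu_k+\tfrac{(n-2)^2}{4}}=-\tfrac{n-2}{2}\pm\sqrt{\lambda_{k,i}}.
\]
Boundedness of $u$ together with a Frobenius-type argument (or equivalently, weighted elliptic theory for the edge operator) rules out the $\alpha_k^-$ branch, so $u_k(s)\sim c_k s^{\alpha_k^+}$ as $s\to 0$. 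The $k=0$ mode yields a continuous limit $u_0(s)\to u(P)$ with remainder $O(s^2)$ driven by the bounded right-hand side, and since $\mu_k$ is non-decreasing so is $\alpha_k^+$; hence the slowest-decaying non-constant contribution occurs at rate $\alpha_i=\sqrt{\lambda_{1,i}}-(n-2)/2$, yielding $|u(x)-u(P_i)|=O(s^{\alpha_i})$.

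The main obstacle is to make this modal decomposition rigorous in the presence of the perturbation $\mathcal{E}$ and of the critical nonlinearity $u^{(n+2)/(n-2)}$, both of which couple different modes and preclude a naive mode-by-mode inversion. I would handle this through weighted $L^p$/Schauder theory on the cone $\mathcal{C}(Y)$ in the spirit of \cite{ACM}, for which the indicial roots $\alpha_k^\pm$ are precisely the Fredholm weights: a weight strictly between $0$ and $\alpha_i$ extracts the continuous extension $u(P)$, while a weight slightly larger than $\alpha_i$ but below the next positive indicial root extracts the leading correction. As a self-contained alternative, once $v:=u-u(P)$ is known to vanish at $P$, the desired bound $|v|\leq Cs^{\alpha_i}$ can be closed on small annuli by constructing $As^{\alpha_i}$ as a super-solution for the linearized Yamabe equation and invoking the maximum principle.
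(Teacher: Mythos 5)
The paper does not actually prove Proposition \ref{p:holder}: it is imported verbatim as Proposition~3.1 of \cite{ACM} (see the sentence immediately preceding the statement), so there is no internal argument to compare yours against. Your outline is nevertheless the standard route to such results and is consistent in spirit with the weighted/edge-calculus methods of the cited reference: the indicial computation is correct, since the model operator $-a\bigl(\partial_{ss}+\tfrac{n-1}{s}\partial_s-\tfrac{\mu_k}{s^2}\bigr)$ acting on $s^\alpha$ gives $\alpha^2+(n-2)\alpha-\mu_k=0$, hence $\alpha_k^{\pm}=-\tfrac{n-2}{2}\pm\sqrt{\mu_k+\tfrac{(n-2)^2}{4}}=-\tfrac{n-2}{2}\pm\sqrt{\lambda_{k,i}}$, which matches the exponent in the statement.

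Two caveats. First, your boundedness step contains a quantitative error: from $u\le Cs^{-\frac{n-2}{2}+\delta}$ one gets $u^{\frac{n+2}{n-2}}\le Cs^{-\frac{n+2}{2}+p\delta}$, and membership of this in $L^q(s^{n-1}ds\,d\mu_{h_0})$ requires $q\bigl(\tfrac{n+2}{2}-p\delta\bigr)<n$, which for small $\delta$ forces $q<\tfrac{2n}{n+2}<\tfrac n2$. So the nonlinearity itself is \emph{not} in $L^q$ for some $q>n/2$. The correct (and standard, Brezis--Kato/Trudinger) device is to rewrite the equation as $\lp_{\bar g}u=Vu$ with potential $V=\bar R\,u^{\frac{4}{n-2}}\le Cs^{-2+\frac{4\delta}{n-2}}$, which \emph{is} in $L^q$ for some $q>n/2$; Moser iteration then applies. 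Second, your sketch leaves the heart of the matter — controlling the mode coupling produced by the perturbation $\mathcal{E}$ (whose coefficients are only $O(s)$ relative to the exact cone, and which includes an $O(1/s)$ scalar-curvature term) and by the critical nonlinearity — to an appeal to weighted elliptic theory or a barrier construction. That is exactly the content of \cite{ACM}, so as a reconstruction of the cited proof this is acceptable, but as a self-contained argument it is a sketch rather than a proof.
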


\begin{remark} 
If a non-negative function $u \in W^{1,2}(M,\bar{g})$ realizes $\mathcal{Y}(M,[\bar{g}])$, then it 
satisfies the assumptions of Proposition \ref{p:holder}, by 
Theorem 1.12 in \cite{ACM}. It would indeed be  possible to show the above upper bound for every weak $W^{1,2}$ solution of \eqref{eq:Y}, for example 
 by cylindrically blowing-up the metric at the conical points and proving exponential decay along the ends. 
\end{remark}

\smallskip

For understanding the possible blow-up at conical points, it is useful to 
compare both the geometry and the Yamabe constant of the compact 
manifold to that of the limiting cones at singular points that are obtained by blowing-up the metric. For a closed Einstein $(n-1)$-manifold $(Y,h_0)$ with  Einstein constant $(n-2)$, we consider the {\em cone over} $Y$ 
\[
(\mathcal{C}(Y), g_0) := ([0,+\infty) \times Y , ds^2 + s^2 h_0).
\]
For such a cone we introduce the functional space 
\[
    \mathscr{D}^{1,2}_{g_0} = \Big\{ \text{closure of $C^\infty_c\big((0,\infty) \times Y\big)$ 
    with respect to $\norm{\nabla \, \cdot \,}_{L^2(d\mu_{g_0})}$} \Big\}.
\]

\begin{remark} \label{rmk:fourier-D}
    Let $\{ u_k \}_{k \in \mathbb{N}}$ be an orthonormal basis of $L^2(Y,d\mu_{h_0})$ consisting in eigenfunctions for $-\Delta_{h_0}$. Let $\Psi \in \mathscr{D}^{1,2}_{g_0}$, then there exists a sequence of functions $\{ \psi_{\lambda_k} \}_{k \in \mathbb{N}}$ such that
    \[
        \Psi(r,y) = \sum_{k = 0}^{+\infty} \psi_{\lambda_k}(r) u_k(y) \ \text{with} \ \psi_{\lambda_k}(r) = \int_Y \Psi(r,y) u_k(y) \, d \mu_{h_0} \ \text{for almost every $r \in (0,+\infty)$}.
    \]
\end{remark}

By formula~(2.4) in~\cite{ACM}, under the assumptions of Theorem 
\ref{t:main}, given a 
conical point $P$  the constant $\mathcal{Y}_P$ coincides with the Yamabe constant of $(\mathcal{C}(Y), g_0)$, which is indeed explicitly known by the 
following result of Petean (Corollary~1.3 in~\cite{Petean}), see also \cite{Mond} for an extension and \cite{Nob-Vio} for related 
results in the setting of RCD spaces.  

\begin{proposition} \label{p:Petean}
    Let $(Y,h_0)$ be a closed Einstein $(n-1)$-manifold with Einstein constant equal to $n-2$, and let $(\mathcal{C}(Y), g_0)$ be the metric cone over $Y$. Then 
    \[
        \mathcal{Y}(\mathcal{C}(Y),g_0) = \bigg( \frac{\mathrm{Vol}_{h_0} (Y)}{\mathrm{Vol}_{g_{S^{n-1}}} (S^{n-1})} \bigg)^{\frac{2}{n}} \mathcal{Y}(S^n,g_{S^n}),
    \]
    and the above constant is attained by the function $U(r,y) = U(r)$ given by 
    \begin{equation} \label{eq:bolla-std}
        U(r) := c_Y \bigg( \frac{1}{1+r^2} \bigg)^{\frac{n-2}{2}} \! ,
        \quad \text{with} \quad
        c_Y := \bigg( \frac{4n(n-1)}{\mathcal{Y}(\mathcal{C}(Y),g_0)} \bigg)^{\frac{n-2}{4}},
    \end{equation}
    where the constant $c_Y$ has been chosen so that the $L^{\frac{2n}{n-2}}(d \mu_{g_0})$ norm of $U$ is equal to one.
 \end{proposition}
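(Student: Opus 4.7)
The plan is to prove the formula in three steps, with the sharp isoperimetric inequality on the cone as the main technical ingredient.

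First, I would apply Proposition~\ref{prop:espansione-operatori-1} with $h(s) \equiv h_0$: all terms involving $h'$ or $h''$ in~\eqref{eq:curv-scal-1} vanish, and the hypothesis $\mathrm{Ric}(h_0) = (n-2) h_0$ forces $R_{h_0} = (n-1)(n-2)$, so the remaining term $(R_{h_0} - (n-1)(n-2))/s^2$ also vanishes. Hence $R_{g_0} \equiv 0$ on the regular part of the cone, and on $\mathscr{D}^{1,2}_{g_0}$ the Yamabe quotient reduces to the pure Sobolev quotient
\[
Q_{g_0}(u) = \frac{a \int_{\mathcal{C}(Y)} |\nabla u|^2 \, d\mu_{g_0}}{\left(\int_{\mathcal{C}(Y)} |u|^{\frac{2n}{n-2}} \, d\mu_{g_0}\right)^{\frac{n-2}{n}}}.
\]

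Second, for a radial test function $u(s,y) = \phi(s)$, the identity $d\mu_{g_0} = s^{n-1}\, ds \, d\mu_{h_0}$ factors both numerator and denominator as $\mathrm{Vol}_{h_0}(Y)$ times one-dimensional integrals in the weighted measure $s^{n-1}\,ds$. Comparing with the analogous decomposition of the classical Sobolev quotient on $\mathbb{R}^n$ applied to the radial function $x \mapsto \phi(|x|)$, one obtains
\[
Q_{g_0}(\phi) = \left(\frac{\mathrm{Vol}_{h_0}(Y)}{\mathrm{Vol}_{g_{S^{n-1}}}(S^{n-1})}\right)^{\frac{2}{n}} \cdot a \cdot \frac{\int_{\mathbb{R}^n} |\nabla \phi|^2 \, dx}{\left(\int_{\mathbb{R}^n} |\phi|^{\frac{2n}{n-2}} \, dx\right)^{\frac{n-2}{n}}}.
\]
By the sharp Sobolev inequality of Aubin--Talenti, the rightmost factor is minimized precisely by $\phi(s) = c(1+s^2)^{-(n-2)/2}$ and attains the value $\mathcal{Y}(S^n, g_{S^n})$. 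This provides the upper bound and identifies the extremal profile $U$ in~\eqref{eq:bolla-std}; the constant $c_Y$ is then fixed by the $L^{\frac{2n}{n-2}}(d\mu_{g_0})$-normalization.

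Third, for the matching lower bound I would reduce to radial competitors via a symmetric decreasing rearrangement along the geodesic distance from the tip: given $u \in \mathscr{D}^{1,2}_{g_0}$, define $u^*(s)$ by requiring its super-level sets to be the geodesic balls $\{s < s_0\}$ of equal $g_0$-volume. Equimeasurability preserves the $L^{\frac{2n}{n-2}}$-norm, while the Pólya--Szegő principle controlling the Dirichlet energy rests on the sharp isoperimetric inequality on the cone,
\[
|\partial \Omega|_{g_0} \geq \left(\frac{\mathrm{Vol}_{h_0}(Y)}{\mathrm{Vol}_{g_{S^{n-1}}}(S^{n-1})}\right)^{\frac{1}{n}} C_n \, \mathrm{Vol}_{g_0}(\Omega)^{\frac{n-1}{n}},
\]
with $C_n$ the Euclidean isoperimetric constant and equality on geodesic balls centered at the tip. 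Combining this with the second step closes both bounds and identifies the bubble $U$ as an extremal.

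The main obstacle is this sharp isoperimetric inequality. The hypothesis $\mathrm{Ric}(h_0) = (n-2) h_0$ is what makes $(\mathcal{C}(Y), g_0)$ Ricci-flat on the regular part, equivalently a CD$(0,n)$ metric measure space at the synthetic level; the inequality can then be established via the ABP method of Cabré--Ros-Oton--Serra adapted to cones, via Brenier-map optimal transport in the style of Cordero-Erausquin--Nazaret--Villani, or via Lott--Sturm--Villani synthetic arguments. A softer alternative is the conformal change $s = e^t$ identifying $(\mathcal{C}(Y), g_0)$ with the cylinder $(\mathbb{R}\times Y, e^{2t}(dt^2 + h_0))$ together with a Fourier decomposition in the $y$-variable: the zero mode realizes the radial one-dimensional problem of the second step, while nonzero modes pay a strictly positive spectral penalty from $-\Delta_{h_0}$. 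However, since the $L^{\frac{2n}{n-2}}$-norm does not split additively across Fourier modes, even this alternative route requires a convexity or interpolation argument to reduce rigorously to the zero mode.
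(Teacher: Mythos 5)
The paper does not prove this proposition at all: it imports it verbatim as Corollary~1.3 of the cited work of Petean, so there is no internal proof to compare against. Your outline is nevertheless essentially the route by which the quoted result is established in the literature: $R_{g_0}\equiv 0$ (correct, and exactly what \eqref{eq:curv-scal-1} gives when $h(s)\equiv h_0$ with $R_{h_0}=(n-1)(n-2)$), reduction of the radial problem to the Euclidean Bliss--Talenti problem with the volume ratio entering to the power $2/n$ (your bookkeeping of the exponents is correct: the numerator scales by $\theta=\mathrm{Vol}_{h_0}(Y)/\mathrm{Vol}(S^{n-1})$ and the denominator by $\theta^{(n-2)/n}$), and then symmetrization to radial competitors via the sharp isoperimetric inequality on the cone. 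The one substantive remark is that the ingredient you flag as ``the main obstacle'' is not open: under precisely the hypothesis $\mathrm{Ric}(h_0)=(n-2)h_0$ the cone has nonnegative (indeed vanishing) Ricci curvature on its regular part, and the theorem of Morgan and Ritor\'e on isoperimetric regions in cones states that geodesic balls centered at the vertex are isoperimetric, with exactly the constant $\big(\mathrm{Vol}_{h_0}(Y)/\mathrm{Vol}(S^{n-1})\big)^{1/n}C_n$ you wrote down; P\'olya--Szeg\H{o} then follows by the usual coarea argument.

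As written, though, your proof is not complete at that step: you list four candidate methods (ABP, optimal transport, synthetic CD$(0,n)$, cylinder plus Fourier modes) without carrying any of them out, and you correctly observe yourself that the last one does not close because the $L^{2n/(n-2)}$ norm does not decompose across Fourier modes. So the gap is one of citation rather than of mathematical substance, but a referee would require you either to invoke Morgan--Ritor\'e explicitly or to execute one of the proposed arguments. Two minor points to tie up for the attainment claim: you should note that $U\in\mathscr{D}^{1,2}_{g_0}$ (true for $n\ge 3$ by the decay of $U$ and $U'$), and that equality holds simultaneously in the rearrangement step (trivially, since $U$ is already radial and decreasing) and in the one-dimensional Bliss inequality, so that $U$ genuinely realizes the infimum rather than merely being a limit of a minimizing sequence.
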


As a consequence of the previous proposition and the choice of the constant $c_Y$ the function~$U$ defined in~\eqref{eq:bolla-std} solves the equation 
    \begin{equation} \label{eq:lagrange-mult-U}
        - a \Delta_{g_0} U = \mathcal{Y}_P U^{\frac{n+2}{n-2}}.
    \end{equation}

Moreover, for every $\eps >0$ also the function $U_\eps(r):= \eps^{-\frac{n-2}{2}} U(r/\eps)$ solves the same equation. Therefore, if we introduce the function
    \begin{equation} \label{eq:V-function}
        V(r):=\frac{\partial}{\partial \eps} U_{\eps}(r) |_{\eps = 1} = \frac{n-2}{2} U(r) + r U'(r),
    \end{equation}
then after taking the derivative in $\eps$ on both sides of the equation~\eqref{eq:lagrange-mult-U} we deduce that $\widetilde{\lp} V = 0$, where
    \begin{equation} \label{eq:V-ker-L}
        \widetilde{\lp} := -a \Delta_{g_0} - \mathcal{Y}_P \frac{n+2}{n-2} U^{\frac{4}{n-2}}.
    \end{equation}
We will refer to $\widetilde{\lp}$ as the linearized operator obtained from the conformal Laplacian $\lp_{g_0}$. 

We consider for a moment the special case $(Y,h_0) = (S^{n-1}, g_{S^{n-1}})$. In this situation, if $\theta \colon S^{n-1} \to \mathbb{R}$ is a first spherical harmonic, that is a function such that $-\Delta_{S^{n-1}} \theta = (n-1) \theta$, then the function $U'(r) \theta(y)$ is a solution of equation~\eqref{eq:V-ker-L}. Indeed, in this case the metric cone $\mathcal{C}(Y)$ is isometric to $\mathbb{R}^n$ endowed with its standard metric and for every $z \in \mathbb{R}^n$ the function $U_z(x):=U(\abs{x-z})$ is again a solution of~\eqref{eq:lagrange-mult-U}. Thus, taking the partial derivative in the direction $z_i$ on both sides of the equation (and evaluating it on $z=0$) we conclude that the function $U'(r) x_i$ is in the kernel of $\widetilde{\lp}$, for every $i=1,\dots,n$. On the other hand, it is well known that the functions $x_i$ generate the eigenspace of $-\Delta_{g_{S^{n-1}}}$ relative to the first non zero eigenvalue $\lambda_1(-\Delta_{g_{S^{n-1}}}) = n-1$. 

Moreover, it has been proved (see for example Lemma~A1 in the Appendix of~\cite{Bianchi-Egnell} or Lemma~5.2 in~\cite{Ambrosetti-Malchiodi-1}) that all the solutions in $\mathscr{D}^{1,2}_{g_0}$ of  equation~\eqref{eq:V-ker-L}  are obtained as a linear combination of the function $V$ defined by~\eqref{eq:V-function} and a function of the type $U' \theta$, where $\theta$ is a first spherical harmonic. 

Now, we turn back to the general case of a closed Einstein $(n-1)$-manifold $(Y,h_0)$ with Einstein constant $(n-2)$. If $(Y,h_0)$ is not isometric to the standard sphere, then the Lichnerowicz-Obata theorem (see e.g. Theorem 2.3 in \cite{Viaclovsky-2}) tells us that the first non zero eigenvalue of the Laplacian $-\Delta_{h_0}$ is strictly greater than $n-1$, which corresponds to the first non zero eigenvalue of the Laplacian on the standard sphere. Combining this observation with a careful reading of the proof in~\cite{Bianchi-Egnell} or in~\cite{Ambrosetti-Malchiodi-1} it is possible to characterize the kernel (in $\mathscr{D}^{1,2}_{g_0}$) of the operator $\widetilde{\lp}$ also in this case. More precisely, the following lemma holds.
    
    \begin{lemma} \label{lemma:kernel-linearized}
        Let $(Y,h_0)$ be a closed Einstein $(n-1)$-manifold with Einstein constant $n-2$, and let $(\mathcal{C}(Y), g_0)$ be the metric cone over $Y$. Assume that $(Y,h_0)$ is not isometric to the standard sphere. Then, any solution in $\mathscr{D}^{1,2}_{g_0}$ of equation~\eqref{eq:V-ker-L} is a constant multiple of the function $V$ defined by~\eqref{eq:V-function}.
    \end{lemma}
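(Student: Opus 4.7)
The plan is to reduce the kernel problem to a family of one-dimensional radial ODEs via Fourier decomposition on the link, and then to rule out non-trivial solutions in all modes above the constant one by a strict monotonicity argument combined with the Lichnerowicz--Obata eigenvalue gap. Since $g_0 = dr^2 + r^2 h_0$ is a warped product, the operator $\widetilde{\lp}$ commutes with $-\Delta_{h_0}$, which makes this decoupling possible.

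I fix an $L^2(Y,d\mu_{h_0})$-orthonormal basis $\{u_k\}_{k \ge 0}$ of eigenfunctions of $-\Delta_{h_0}$ with eigenvalues $0 = \lambda_0 < \lambda_1 \le \lambda_2 \le \cdots$, and expand any $\Psi \in \ker \widetilde{\lp}$ as $\Psi(r,y) = \sum_{k} \psi_k(r)\, u_k(y)$ (Remark \ref{rmk:fourier-D}). Using \eqref{eq:Laplacian-1} for the cone Laplacian, $\widetilde{\lp}\Psi = 0$ decouples into the family of radial ODEs
\[
\mathsf{L}_{\lambda_k}\psi_k := -a\Big(\psi_k'' + \tfrac{n-1}{r}\psi_k' - \tfrac{\lambda_k}{r^2}\psi_k\Big) - \mathcal{Y}_P\,\tfrac{n+2}{n-2}\,U(r)^{\frac{4}{n-2}}\psi_k = 0, \qquad r \in (0,\infty),
\]
and the condition $\Psi \in \mathscr{D}^{1,2}_{g_0}$ is equivalent, by separation of variables and Parseval, to each $\psi_k$ belonging to the weighted Sobolev space with finite norm $\int_0^\infty \big((\psi_k')^2 + \tfrac{\lambda_k}{r^2}\psi_k^2\big)\, r^{n-1}\,dr$.

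For $k=0$ the radial ODE $\mathsf{L}_0\psi = 0$ has regular singular points at $r=0$ and at $r=\infty$, with indicial exponents $0$ and $-(n-2)$; together with the explicit solution $V$ from \eqref{eq:V-function}, this forces $\ker \mathsf{L}_0$ in the above space to be one-dimensional and spanned by $V$. For $k\ge 1$, Lichnerowicz--Obata combined with the hypothesis $(Y,h_0)\neq (S^{n-1},g_{S^{n-1}})$ yields $\lambda_k \ge \lambda_1 > n-1$. I would then consider the quadratic form
\[
Q_\lambda(\psi) := \int_0^\infty\Big(a\,(\psi')^2 + \tfrac{a\lambda}{r^2}\psi^2 - \mathcal{Y}_P\,\tfrac{n+2}{n-2}\,U^{\frac{4}{n-2}}\psi^2\Big)\, r^{n-1}\,dr.
\]
On the round sphere, $U'(r)$ belongs to $\ker \mathsf{L}_{n-1}$ and is strictly negative on $(0,\infty)$; a standard Perron/ground-state uniqueness argument gives $Q_{n-1} \ge 0$ with kernel exactly $\langle U'\rangle$. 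Since $Q_\lambda$ is strictly increasing in $\lambda$, it follows that $Q_\lambda > 0$ and hence $\ker \mathsf{L}_{\lambda_k} = \{0\}$ for every $\lambda_k > n-1$. Collecting the two cases, $\Psi = cV$ for some constant $c$.

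The main obstacle is the rigorous verification of $Q_{n-1} \ge 0$ with the stated kernel. This is essentially the content of Lemma~A1 in \cite{Bianchi-Egnell} and of Lemma~5.2 in \cite{Ambrosetti-Malchiodi-1}, established there in the Euclidean setting; however, their argument depends only on the radial ODE structure and on $U'$ having constant sign on $(0,\infty)$, both of which are intrinsic to the bubble $U$ and independent of the particular link, so the same ground-state argument transfers verbatim to the cone $(\mathcal{C}(Y), g_0)$. Additional care is needed at the conical tip $r=0$, where one checks that the condition $\Psi \in \mathscr{D}^{1,2}_{g_0}$ selects the correct boundary behavior for each Fourier mode, so that the Sturm--Liouville analysis applies.
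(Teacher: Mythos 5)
Your proposal is correct and follows essentially the same route the paper takes (which it only sketches by pointing to Lemma~A1 of \cite{Bianchi-Egnell} and Lemma~5.2 of \cite{Ambrosetti-Malchiodi-1}): Fourier decomposition on the link, the ground-state/constant-sign property of $U'$ for the mode $\lambda = n-1$, and the Lichnerowicz--Obata gap $\lambda_1 > n-1$ to kill all non-radial modes. Your added care about the indicial behavior at the tip and the identification of $\ker \mathsf{L}_0$ with $\langle V\rangle$ is exactly the "careful reading" the authors allude to.
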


    \begin{remark} \label{rmk:U'-w-eing}
        Notice that on $(Y,h_0) = (S^{n-1}, g_{S^{n-1}})$, one has $\widetilde{\lp}(U' x_i) = 0$. This implies that
            \begin{equation} \label{eq:U'-first-we}
                \widetilde{\lp}(U') = -a(n-1)\frac{U'}{r^2} \ \text{for any link $(Y,h_0)$}.
            \end{equation}
        Indeed, the action of $\widetilde{\lp}$ on radial functions does not depend on the metric on the link.
    \end{remark}

We conclude the section stating some useful integral identities (which can be proved by integrating by parts) involving the function $U$.

    \begin{proposition} \label{prop:int-iden}
        Let $U$ be the function defined by~\eqref{eq:bolla-std}. Then, for $n \ge 5$ the following identities hold and all the integrals are finite.
        \begin{align}
            \label{eq:relazioni-integrali-bolla-1}
            \int_0^{+\infty} U(r)^{\frac{2n}{n-2}} r^{n+1} \, d r & = \frac{n^2(n-4)}{\mathcal{Y}_P(n-2)} \int_0^{+\infty} U(r)^2 r^{n-1} \, d r, \\[1.5ex]
            \label{eq:relazioni-integrali-bolla-2}
            \int_0^{+\infty} U(r) U'(r) r^n \, d r & = - \frac{n}{2} \int_0^{+\infty} U(r)^2 r^{n-1} \, d r, \\[1.5ex]
            \label{eq:relazioni-integrali-bolla-3}
            \int_0^{+\infty} \big( U'(r) \big)^2 r^{n+1} \, dr & = \frac{n(n^2-4)}{4(n-1)} \int_0^{+\infty} U(r)^2 r^{n-1} \, d r.
        \end{align}
    \end{proposition}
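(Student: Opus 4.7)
The function $U$ is radial, so on $(\mathcal{C}(Y),g_0)$ equation~\eqref{eq:lagrange-mult-U} reduces to the ODE
\[
    -a\Big(U'' + \tfrac{n-1}{r}\, U'\Big) = \mathcal{Y}_P\, U^{\frac{n+2}{n-2}} \quad \text{on } (0,+\infty),
\]
and every integral in the statement has the form $\int_0^{+\infty} F(U,U')\, r^k\, dr$. My plan is to prove all three identities by one-dimensional integration by parts, testing this ODE against a few carefully chosen multipliers and using the explicit decay $U(r)\sim c_Y r^{-(n-2)}$ and $U'(r)\sim -c_Y(n-2)\, r^{-(n-1)}$ as $r\to +\infty$. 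For $n\ge 5$ these rates simultaneously ensure finiteness of every integral in question and the vanishing of every boundary term that arises.

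I would start with~\eqref{eq:relazioni-integrali-bolla-2}, which is just the rewriting $2UU' = (U^2)'$ followed by one integration by parts of $r^n$ against $(U^2)'/2$; the boundary term $\tfrac12[U^2 r^n]_0^{+\infty}$ vanishes since $U^2 r^n \sim r^{-(n-4)}\to 0$. The next step is to produce two independent linear relations among the remaining integrals by testing the ODE against two different multipliers. Multiplying the ODE by $r^{n+1}U$, integrating the $U U''$ term by parts twice, and invoking~\eqref{eq:relazioni-integrali-bolla-2} to eliminate $\int r^n U U'\, dr$ gives
\[
    \int_0^{+\infty} (U')^2 r^{n+1}\, dr = \frac{\mathcal{Y}_P}{a}\int_0^{+\infty} U^{\frac{2n}{n-2}} r^{n+1}\, dr + n \int_0^{+\infty} U^2 r^{n-1}\, dr. \qquad (\mathrm{A})
\]
For the second relation I would test instead against $r^{n+2}U'$, exploiting $U'U'' = \tfrac12((U')^2)'$ and $U^{(n+2)/(n-2)} U' = \tfrac{n-2}{2n}(U^{2n/(n-2)})'$; one integration by parts on each term produces the Pohozaev-type identity
\[
    \tfrac{n-4}{2}\int_0^{+\infty} (U')^2 r^{n+1}\, dr = \frac{\mathcal{Y}_P}{a}\cdot\frac{(n-2)(n+2)}{2n}\int_0^{+\infty} U^{\frac{2n}{n-2}} r^{n+1}\, dr. \qquad (\mathrm{D})
\]

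Once (A) and (D) are in hand, the endgame is purely algebraic: eliminating $\int (U')^2 r^{n+1}\, dr$ between them leaves a linear relation between the other two integrals, and substituting $a = 4(n-1)/(n-2)$ together with the simplification $(n-2)(n+2) - n(n-4) = 4(n-1)$ reproduces precisely~\eqref{eq:relazioni-integrali-bolla-1}. Plugging this back into~(A) and using $\tfrac{n^2(n-4)}{4(n-1)} + n = \tfrac{n(n^2-4)}{4(n-1)}$ delivers~\eqref{eq:relazioni-integrali-bolla-3}. The only nontrivial obstacle I anticipate is the boundary-term bookkeeping for the Pohozaev-type multiplier: the most delicate contribution, $[(U')^2 r^{n+2}]_0^{+\infty}$, behaves like $r^{-(n-4)}$ at infinity and is precisely what forces the assumption $n\ge 5$; all other boundary terms, and convergence of every integral involved, are controlled by the same decay estimate.
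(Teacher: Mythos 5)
Your proposal is correct and follows exactly the route the paper indicates (it only remarks that the identities ``can be proved by integrating by parts''): identity \eqref{eq:relazioni-integrali-bolla-2} by writing $2UU'=(U^2)'$, and the other two by testing the radial ODE $-a\bigl(U''+\tfrac{n-1}{r}U'\bigr)=\mathcal{Y}_P U^{\frac{n+2}{n-2}}$ against $r^{n+1}U$ and the Pohozaev multiplier $r^{n+2}U'$, with all boundary terms and the integrals themselves controlled by the decay $U\sim c_Y r^{-(n-2)}$, which is precisely where $n\ge 5$ enters. I checked the algebra in (A), (D) and the final eliminations, and everything is accurate.
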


\section{Formal expansions of the Yamabe quotient} \label{s:exp-I}

Let us recall that on a metric cone $(\mathcal{C}(Y),g_0)$ over a link $Y$ as in Theorem \ref{t:main}, 
the Yamabe constant is realized by the radial function $U$, see 
Proposition \ref{p:Petean}. We also recall that 
\[
\mathcal{Y}(\mathcal{C}(Y), g_0) = \mathcal{Y}_P, 
\]
where $\mathcal{Y}_P$ is given by \eqref{eq:YYPP}.

In this section we will consider on $\mathcal{C}(Y)$ a metric $g$ as in~\eqref{eq:g-assumption} and test functions of the form 
\begin{equation}
    \label{eq:def-test-functions}
    u_\eps(s,y) = \eps^{-\frac{n-2}{2}} \big( U(s/\eps) + \eps \Psi (s/\eps,y) \big),
\end{equation}
with $\Psi$ smooth on the regular part of the cone, 
and expand the quotient $Q_g$ on $u_\eps$ formally in powers of 
$\eps$. We will choose $\Psi$ suitably, see Lemma \ref{l:Psi}, 
in order to obtain formally a negative and leading coefficient 
of $\eps^2$ in the expansion. Rigorous estimates will be 
obtained in the next section starting from the formal ones derived here. 

More precisely, we want to study the asymptotic behaviour for small $\eps$ of the quantity
\begin{equation}
    \label{eq:Yamabe-quotient-ev}
    I(\eps):=
    \frac{\int_{\mathcal{C}(Y)} u_\eps \lp_g u_\eps \, d\mu_g}{ \Big( \int_{\mathcal{C}(Y)} u_{\eps}^{\frac{2n}{n-2}} \, d\mu_g \Big)^{\frac{n-2}{n}}},
\end{equation}
where $\lp_g:=-a \Delta_g + R_g$ denotes the conformal Laplacian operator associated to the metric $g$. 
For our later choice of test functions, which will allow  an integration by parts, $I(\eps)$ will coincide with the Yamabe quotient on those.

We consider the change of variable given by $(s,y)=F_\eps(r,y)=(\eps r, y)$, in this way the pull-back metric $F_\eps^{*}g=\eps^2 g_\eps$ becomes, after dividing by $\eps^2$, $g_\eps:=dr^2+r^2h(\eps r)$. Moreover, using the  covariance of the conformal Laplacian \eqref{eq:conf-lap} we get $\lp_{g_\eps} \big( \eps^{\frac{n-2}{2}} (u_\eps \circ F_\eps) \big) = \eps^{\frac{n+2}{2}} (\lp_g u_\eps) \circ F_\eps$. We introduce the notation 
\begin{equation}
    \label{eq:w-eps}
    w_\eps(r,y):=\eps^{\frac{n-2}{2}} u_\eps(F_\eps(r,y)) = U(r) + \eps \Psi (r,y),
\end{equation}
and from the previous discussion we deduce that
\begin{equation}
    \label{eq:change.variable-1}
    \frac{\int_{\mathcal{C}(Y)} w_\eps \lp_{g_\eps} w_\eps \, d\mu_{g_\eps}}{ \Big( \int_{\mathcal{C}(Y)} w_{\eps}^{\frac{2n}{n-2}} \, d\mu_{g_\eps} \Big)^{\frac{n-2}{n}}}
    =
    \frac{\int_{\mathcal{C}(Y)} u_\eps \lp_g u_\eps \, d\mu_g}{ \Big( \int_{\mathcal{C}(Y)} u_{\eps}^{\frac{2n}{n-2}} \, d\mu_g \Big)^{\frac{n-2}{n}}},
\end{equation}
where we also used the simple identity $(F_\eps)_{*} \mu_{g_\eps} = \eps^{-n} \mu_g$ concerning 
push-forwards.

\subsection{Construction of the correction $\Psi$} \label{ss:c-psi}

As we will see in Section \ref{s:exp-II}, the conformal Laplacian 
expands as 
\begin{equation}
    \label{eq:expansion-01}
    \lp_{g_\eps} = \lp_0 + \eps \lp_1 +  o(\eps),
\end{equation}
and we are interested in finding a solution to the approximate Yamabe equation 
\[
\lp_{g_\eps} (U + \eps \Psi) = \mathcal{Y}_P (U + \eps \Psi)^{\frac{n+2}{n-2}} + o(\eps). 
\]
Therefore on $\Psi$ we will require that 
\begin{align}
    \label{eq:equation-Psi}
    \widetilde{\lp} \Psi & := -a \Delta_{g_0} \Psi - \mathcal{Y}_P \frac{n+2}{n-2} U^{\frac{4}{n-2}} \Psi = F, \\
    F & = - \lp_1 U \label{eq:FU}
\end{align}

From asymptotic expansions performed later, see Lemma \ref{l:errori-precisi}, we have that 
\begin{equation}
      \label{eq:L1copy}
    \lp_1 U = -\frac{a}{2} \big( \mathrm{tr}_{h_0} \xi \big) U' - \Big( \Delta_{h_0} \mathrm{tr}_{h_0} \xi - \delta^2_{h_0} \xi + 2(n-1) \mathrm{tr}_{h_0} \xi \Big) \frac{U}{r}.
\end{equation}

\begin{remark} \label{r:annull-phi1}
We notice that from \eqref{eq:L1copy} we can simply take $\Psi \equiv 0$ when both 
$\mathrm{tr}_{h_0} \xi = 0$ and $\mathsf{d} R_{h_0}[\xi] =0$, see~\eqref{eq:scalar-differential}. 
\end{remark}

    \begin{lemma}\label{l:Psi}
        Let $n \ge 3$ and $(Y,h_0)$ be a closed Einstein $(n-1)$-manifold with Einstein constant equal to $n-2$, which is not isometric to the standard sphere.      
        Let $F \in L^{\frac{2n}{n+2}} (\mathcal{C}(Y), d\mu_{g_0})$ be a function such that 
            \begin{equation}\label{eq:orth}
                \int_{\mathcal{C}(Y)} F V\, d \mu_{g_0}  = 0,
            \end{equation}
        where $V$ is the function defined by~\eqref{eq:V-function}.
        Then, there exists a unique solution $\Psi \in \mathscr{D}^{1,2}_{g_0}$ to  equation~\eqref{eq:equation-Psi} that satisfies also the condition~\eqref{eq:orth}. Moreover, by elliptic regularity the function~$\Psi$ is of class $W^{2,\frac{2n}{n+2}}_{\mathsf{loc}}$ on the regular part of $\mathcal{C}(Y)$.
    \end{lemma}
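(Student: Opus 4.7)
My plan is to solve equation~\eqref{eq:equation-Psi} through a Fredholm-alternative argument in the Hilbert space $\mathscr{D}^{1,2}_{g_0}$, endowed with the inner product $\langle \Psi, \Phi \rangle := a \int_{\mathcal{C}(Y)} \nabla \Psi \cdot \nabla \Phi \, d\mu_{g_0}$. A function $\Psi \in \mathscr{D}^{1,2}_{g_0}$ is a weak solution of $\widetilde{\lp} \Psi = F$ precisely when
\[
    \langle \Psi, \Phi \rangle - \mathcal{Y}_P \tfrac{n+2}{n-2} \int_{\mathcal{C}(Y)} U^{\frac{4}{n-2}} \Psi \Phi \, d\mu_{g_0} = \int_{\mathcal{C}(Y)} F \Phi \, d\mu_{g_0} \qquad \text{for every } \Phi \in \mathscr{D}^{1,2}_{g_0}.
\]
The right-hand side defines a continuous linear functional on $\mathscr{D}^{1,2}_{g_0}$ by Hölder's inequality together with the Sobolev embedding $\mathscr{D}^{1,2}_{g_0} \hookrightarrow L^{\frac{2n}{n-2}}(\mathcal{C}(Y), d\mu_{g_0})$, using that $F \in L^{\frac{2n}{n+2}}$. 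I would then rewrite the problem via Riesz representation as $(\mathrm{Id} - K)\Psi = G$, where $K \colon \mathscr{D}^{1,2}_{g_0} \to \mathscr{D}^{1,2}_{g_0}$ is the bounded operator representing the quadratic form $(\Psi, \Phi) \mapsto \mathcal{Y}_P \frac{n+2}{n-2}\int U^{\frac{4}{n-2}} \Psi \Phi \, d\mu_{g_0}$, and $G$ is the Riesz representative of the right-hand side functional.

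The central analytic step is to verify that $K$ is compact. Using the explicit form $U^{\frac{4}{n-2}} = c_Y^{\frac{4}{n-2}}(1+r^2)^{-2}$, one checks directly that $U^{\frac{4}{n-2}} \in L^{n/2}(\mathcal{C}(Y), d\mu_{g_0})$, so that Hölder combined with the Sobolev embedding yields the boundedness estimate. For compactness, given a sequence $\Psi_j \rightharpoonup 0$ in $\mathscr{D}^{1,2}_{g_0}$, I would truncate the weight $U^{\frac{4}{n-2}}$ to the annulus $\{r \in [\delta, R]\}$: on this region local Rellich--Kondrachov compactness gives strong $L^2$-convergence of the $\Psi_j$, while the cut-away contributions are bounded by the smallness of the $L^{n/2}$-mass of $U^{\frac{4}{n-2}}$ near the apex and near infinity together with the uniform Sobolev bound on the sequence. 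This exhibits $K$ as an operator-norm limit of compact operators, hence compact, so $\mathrm{Id} - K$ is Fredholm of index zero.

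The Fredholm alternative then reduces solvability to the orthogonality of $F$ to $\ker \widetilde{\lp}$ in the $L^2$-pairing. Since $\widetilde{\lp}$ is formally self-adjoint, this kernel coincides with the kernel of $\widetilde{\lp}$ inside $\mathscr{D}^{1,2}_{g_0}$, which by Lemma~\ref{lemma:kernel-linearized} (invoking the hypothesis that $(Y,h_0)$ is not the round sphere) equals $\mathbb{R} V$. Hypothesis~\eqref{eq:orth} is precisely this orthogonality condition, so a solution exists; any two solutions differ by a multiple of $V$, and the further orthogonality requirement in the lemma fixes the representative uniquely. The final regularity assertion follows by rewriting $-a \Delta_{g_0} \Psi = F + \mathcal{Y}_P \frac{n+2}{n-2} U^{\frac{4}{n-2}} \Psi$ with the right-hand side in $L^{\frac{2n}{n+2}}_{\mathrm{loc}}$ on the regular part of $\mathcal{C}(Y)$, where the coefficients of $\widetilde{\lp}$ are smooth, and applying local Calderón--Zygmund estimates.

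I expect the compactness of $K$ to be the main obstacle, since $\mathcal{C}(Y)$ is non-compact and singular at the apex, so the global Sobolev embedding $\mathscr{D}^{1,2}_{g_0} \hookrightarrow L^{\frac{2n}{n-2}}$ is only continuous, not compact. Gaining compactness requires the precise decay $U^{\frac{4}{n-2}} \sim r^{-4}$ at infinity and the boundedness near the apex, exploited through the $L^{n/2}$-tightness of the weight combined with localized Rellich-type arguments on intermediate annuli.
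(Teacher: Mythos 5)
Your argument is correct and follows essentially the same route as the paper: the weak formulation in $\mathscr{D}^{1,2}_{g_0}$, Riesz representation to recast the problem as $(\mathrm{Id}-K)\Psi = G$ with $K$ compact and self-adjoint, Fredholm's alternative combined with Lemma~\ref{lemma:kernel-linearized} to identify the kernel with $\mathbb{R}V$, and local elliptic regularity at the end. The only difference is that you spell out the compactness of $K$ (truncation to annuli plus Rellich--Kondrachov plus $L^{n/2}$-tightness of $U^{\frac{4}{n-2}}$), a step the paper asserts without proof; your justification is sound.
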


\begin{remark}
    If $F(r,y) = q(r) H(y)$ for a function $H$ with zero average on the link, then any solution of equation~\eqref{eq:equation-Psi} satisfies 
        \begin{equation}\label{eq:ortho-Psi}
            \int_{\mathcal{C}(Y)} \Psi U^{\frac{n+2}{n-2}} \, d \mu_{g_0}  = 0.
        \end{equation}
    Indeed, using the identity $\widetilde{\lp} U = - 4 \mathcal{Y}_P U^{\frac{n+2}{n-2}} / (n-2)$ and the fact that $\widetilde{\lp}$ is self-adjoint with respect to $L^2(\mathcal{C}(Y),d\mu_{g_0})$ we obtain
        \begin{align*}
            - \frac{4 \mathcal{Y}_P}{n-2}\int_{\mathcal{C}(Y)} \Psi U^{\frac{n+2}{n-2}} \, d\mu_{g_0}
            = \int_{\mathcal{C}(Y)} U \widetilde{\lp} \Psi \, d\mu_{g_0}
            = \int_0^{+\infty} U(r) q(r) r^{n-1} \, dr \int_Y H(y)  \, d\mu_{h_0} = 0.
        \end{align*}
\end{remark}

\begin{proof}
    We preliminarily notice that the function $V$ defined by~\eqref{eq:V-function} belongs to $L^{\frac{2n}{n-2}}(\mathcal{C}(Y), d\mu_{g_0})$, in particular if $F$ belongs to $L^{\frac{2n}{n+2}}(\mathcal{C}(Y), d\mu_{g_0})$, then the product between $F$ and $V$ is integrable and  the condition~\eqref{eq:orth} makes sense.
    A function $\Psi \in \mathscr{D}^{1,2}_{g_0}$ is a (weak) solution of~\eqref{eq:equation-Psi} if and only if
        \begin{equation} \label{eq:weak-sol-linearized}
            \int_{\mathcal{C}(Y)} a \langle \nabla_{g_0} \Psi, \nabla_{g_0} \Phi \rangle \, d \mu_{g_0}
            - \mathcal{Y}_P \frac{n+2}{n-2}
            \int_{\mathcal{C}(Y)} U^{\frac{4}{n-2}} \Psi \Phi \, d \mu_{g_0}
            =
            \int_{\mathcal{C}(Y)} F  \Phi \, d \mu_{g_0},
        \quad
        \forall \Phi \in \mathscr{D}^{1,2}_{g_0}.
        \end{equation}
    Now, the function
        \begin{align}
            \notag
            \mathscr{D}^{1,2}_{g_0} \times \mathscr{D}^{1,2}_{g_0} & \to \mathbb{R} \\
            \label{eq:quadratic-form-T}
            (\Psi, \Phi) & \mapsto \mathcal{Y}_P \frac{n+2}{n-2} \int_{\mathcal{C}(Y)} U^{\frac{4}{n-2}} \Psi \Phi \, d \mu_{g_0},
        \end{align}
    defines a positive, continuous and symmetric bilinear form thanks to the fact that $\mathscr{D}^{1,2}_{g_0}$ embeds in $L^2(U^{\frac{4}{n-2}} d \mu_{g_0} )$.
    Moreover, the previous embedding is compact, combining this fact with the Riesz representation theorem we deduce that there exists a positive, continuous, self-adjoint (with respect to the standard scalar product of $\mathscr{D}^{1,2}_{g_0}$), compact and linear operator $T$ from $\mathscr{D}^{1,2}_{g_0}$ to itself such that
        \[
            \langle T \Psi, \Phi \rangle_{\mathscr{D}^{1,2}_{g_0}}
            = \mathcal{Y}_P \frac{n+2}{n-2}
            \int_{\mathcal{C}(Y)} U^{\frac{4}{n-2}} \Psi \Phi \, d \mu_{g_0},
            \quad
            \forall \Psi, \Phi \in \mathscr{D}^{1,2}_{g_0}.
        \]
    Similarly, there exists $Q \in \mathscr{D}^{1,2}_{g_0}$ such that
        \[
            \langle Q, \Phi \rangle_{\mathscr{D}^{1,2}_{g_0}}
            =
            \int_{\mathcal{C}(Y)} F \Phi \, d \mu_{g_0},
            \qquad
            \forall \Phi \in \mathscr{D}^{1,2}_{g_0}.
        \]
    In particular, there exists a solution $\Psi$ to~\eqref{eq:weak-sol-linearized} if and only if there exists $\Psi \in \mathscr{D}^{1,2}_{g_0}$ such that
        \begin{equation} \label{eq:Fredholm-1}
            a \Psi - T \Psi = Q.
        \end{equation}
    
    By Fredholm's alternative (see e.g. Theorem~6.6 in~\cite{Brezis}) there exists a solution to~\eqref{eq:Fredholm-1} if and only if $Q$ is orthogonal to $\mathrm{Ker} ( a \mathrm{Id} - T)$ in $\mathscr{D}^{1,2}_{g_0}$. It is clear from the definition of $T$ that $\Phi \in \mathrm{Ker} ( a \mathrm{Id} - T)$ if and only if $\widetilde{\lp} \Phi = 0$ in $\mathscr{D}^{1,2}_{g_0}$. Therefore, thanks to Lemma~\ref{lemma:kernel-linearized} we have only to check that $Q$ is orthogonal to $V$ in $\mathscr{D}^{1,2}_{g_0}$, and this follows immediately from assumption~\eqref{eq:orth} and the definition of $Q$.
\end{proof}

We notice that if $n \ge 5$ the right-hand side of~\eqref{eq:L1copy} 
satisfies the integrability assumptions of the previous lemma. Moreover, 
it is always possible via a conformal change of metric to obtain 
condition \eqref{eq:orth}, see the comments after \eqref{eq:xi-zero-mean}. 
Therefore, with this extra property we can find a solution of $\widetilde{\lp} \Psi = -\lp_1 U$. At this point, with this choice of $\Psi$, we claim that $I(\eps)$ defined by~\eqref{eq:Yamabe-quotient-ev} extends to a regular function up to $\eps=0$ with the following expansion near zero: this will be rigorously proved in the next section. 

    \begin{proposition}\label{p:exp-full}
        Let $n \ge 5$: then we have that $I(0) = \mathcal{Y}_P$, $I'(0)=0$, and that
            \begin{equation} \label{eq:expansion-5}
                \frac{1}{2} I''(0) = - \int_{\mathcal{C}(Y)} \Psi \widetilde{\lp} \Psi \, d\mu_{g_0}
                + \frac{\omega}{2} \mathrm{Vol}_{h_0}(Y)^{\frac{n-3}{n-1}} \mathcal{R}''(h_0)[\xi,\xi]
                + \frac{\omega}{4} \int_Y \Big( (\mathrm{tr} \xi )^2 - \abs{\xi}^2 \Big) \, d \mu_{h_0},
            \end{equation}
        where $\Psi$ is as in~\eqref{eq:equation-Psi} (whose existence is ensured by Lemma~\ref{l:Psi}) and $\omega$ is the constant given by
            \begin{equation} \label{eq:cost-omega}
                \omega:=\int_0^{+\infty} U(r)^2 r^{n-1} \, dr.
            \end{equation}
\end{proposition}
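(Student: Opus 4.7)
The plan is to expand the numerator $N(\eps):=\int_{\mathcal{C}(Y)} w_\eps \lp_{g_\eps} w_\eps\, d\mu_{g_\eps}$ and denominator $D(\eps):=\int_{\mathcal{C}(Y)} w_\eps^{\frac{2n}{n-2}}\, d\mu_{g_\eps}$ of $I(\eps)=N(\eps)/D(\eps)^{\frac{n-2}{n}}$ in Taylor series up to order $\eps^2$, with $w_\eps=U+\eps\Psi$. Applying Propositions \ref{prop:espansione-operatori-1} and \ref{prop:espansione-operatori-2} to the family $s\mapsto h(\eps s)$ yields
\[
d\mu_{g_\eps}=(1+\eps A_1+\eps^2 A_2)\,d\mu_{g_0}+o(\eps^2),\qquad \lp_{g_\eps}=\lp_0+\eps\lp_1+\eps^2\lp_2+o(\eps^2),
\]
with $\lp_0=-a\Delta_{g_0}$ (since $R_{g_0}\equiv 0$ under the Einstein normalization of $h_0$), $A_1=\tfrac{r}{2}\mathrm{tr}_{h_0}\xi$, $A_2$ involving $(\mathrm{tr}\xi)^2$, $\abs{\xi}^2$ and $\mathrm{tr}\eta$, and $\lp_1 U$ as in \eqref{eq:L1copy}. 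Since $\int U^{\frac{2n}{n-2}}d\mu_{g_0}=1$ by \eqref{eq:bolla-std} and $\lp_0 U=\mathcal{Y}_P U^{(n+2)/(n-2)}$ by \eqref{eq:lagrange-mult-U}, one reads off $I(0)=\mathcal{Y}_P$.

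For $I'(0)$, self-adjointness of $\lp_0$ in $L^2(d\mu_{g_0})$ makes the $\Psi$-dependent parts of $N'(0)$ cancel against those of $\tfrac{n-2}{n}\mathcal{Y}_P D'(0)$, leaving $I'(0)=\int U\lp_1 U\,d\mu_{g_0}+\tfrac{2\mathcal{Y}_P}{n}\int A_1 U^{\frac{2n}{n-2}}d\mu_{g_0}$. Integrating first over the closed link kills the divergences $\Delta\mathrm{tr}\xi$ and $\delta^2\xi$, so both terms become proportional to $\int_Y\mathrm{tr}_{h_0}\xi\,d\mu_{h_0}$; a short radial computation using the identity $\int UU'r^n\,dr=-\tfrac{n}{2}\int U^2 r^{n-1}dr$ from \eqref{eq:relazioni-integrali-bolla-2} checks their exact cancellation once $\int_Y\mathrm{tr}_{h_0}\xi=0$, the Fredholm compatibility for Lemma \ref{l:Psi}, arranged by a preliminary conformal gauge (cf.~the discussion before \eqref{s:xi-conf}). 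Combined with \eqref{eq:ortho-Psi} this also gives $D'(0)=0$, reducing the second-order identity to $\tfrac{1}{2}I''(0)=\tfrac{1}{2}N''(0)-\tfrac{n-2}{2n}\mathcal{Y}_P D''(0)$.

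Collecting the $\eps^2$-coefficient of $N(\eps)-\mathcal{Y}_P D(\eps)^{\frac{n-2}{n}}$ and separating a $\Psi$-dependent block from a purely geometric block $\mathcal{J}[\xi,\eta]$, the $\Psi$-block rearranges via $\lp_0 U=\mathcal{Y}_P U^{(n+2)/(n-2)}$ to
\[
\int\Psi\lp_0\Psi\,d\mu_{g_0}-\mathcal{Y}_P\tfrac{n+2}{n-2}\int U^{\frac{4}{n-2}}\Psi^2 d\mu_{g_0}+2\int\Psi\lp_1 U\,d\mu_{g_0}=\int\Psi\widetilde{\lp}\Psi\,d\mu_{g_0}+2\int\Psi\lp_1 U\,d\mu_{g_0},
\]
which by the defining equation $\widetilde{\lp}\Psi=-\lp_1 U$ collapses to $-\int\Psi\widetilde{\lp}\Psi\,d\mu_{g_0}$, producing the first summand of \eqref{eq:expansion-5}.

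Identifying $\mathcal{J}[\xi,\eta]$ with the remaining two summands of \eqref{eq:expansion-5} is the main technical obstacle. Plugging in the scalar-curvature expansion \eqref{eq:curv-scal-1} together with \eqref{eq:der-scalar-2} (with $\sigma=1$), the tensor $\eta$ appears in the order-$\eps^2$ piece of $R_{g_\eps}$ (through $\mathsf{d}R_{h_0}[\eta]$ and through the $\mathrm{tr}\eta$ term arising from $F'(0)=-\abs{\xi}^2+\mathrm{tr}\eta$ in \eqref{eq:measure-3}), in the corresponding order-$\eps^2$ piece of $\Delta_{g_\eps} U$, and in $A_2$ through $\mathrm{tr}\eta$. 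After integrating on $Y$ (which eliminates the divergence terms $\Delta\mathrm{tr}\eta$, $\delta^2\eta$ and $\delta\beta$) and using the radial identity \eqref{eq:relazioni-integrali-bolla-1} to evaluate $\int U^{\frac{2n}{n-2}}r^{n+1}dr$, the total coefficient of $\int_Y\mathrm{tr}_{h_0}\eta\,d\mu_{h_0}$ in $\tfrac{1}{2}I''(0)$ vanishes identically, so $\eta$ drops out of the answer as it must. The surviving $\xi$-quadratic pieces, once all radial moments are normalized to $\omega$ via Proposition \ref{prop:int-iden}, reassemble into $\tfrac{\omega}{2}\mathrm{Vol}_{h_0}(Y)^{(n-3)/(n-1)}\mathcal{R}''(h_0)[\xi,\xi]$ (matching the formula \eqref{eq:second-var-EH-simplified} for the Einstein-Hilbert second variation, with $\langle A_{h_0}(\xi),\xi\rangle$ entering through $B_{h_0}(\xi)$ in \eqref{eq:B-op}) plus the residual term $\tfrac{\omega}{4}\int_Y((\mathrm{tr}\xi)^2-\abs{\xi}^2)\,d\mu_{h_0}$, originating from the cone-geometric pieces $\tfrac{3}{4}\abs{h'}^2_h-\tfrac{1}{4}(\mathrm{tr}_h h')^2$ in \eqref{eq:curv-scal-1} combined with the $\abs{\xi}^2$ and $(\mathrm{tr}\xi)^2$ contributions to $A_2$, completing the expansion \eqref{eq:expansion-5}.
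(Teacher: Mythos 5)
Your proposal is correct and follows essentially the same route as the paper's proof, which is carried out rigorously in Section \ref{s:exp-II} (Lemmas \ref{l:errori-precisi}, \ref{l:den}, \ref{l:num} and the proof of Proposition \ref{p:upper-bd-Y}): expand numerator and denominator to order $\eps^2$, kill the first-order terms via the zero average of $\mathrm{tr}_{h_0}\xi$ and \eqref{eq:ortho-Psi}, collapse the $\Psi$-block using $\widetilde{\lp}\Psi=-\lp_1 U$, and reassemble the remaining geometric terms. The one step you describe rather than execute --- the arithmetic identifying the geometric block with the last two summands of \eqref{eq:expansion-5}, including the cancellation of all $\eta$-terms --- is exactly what the paper performs in \eqref{eq:III-simplified}--\eqref{eq:II-simplified} combined with Proposition \ref{prop:second-var-EH}, and your account of where each contribution originates and why $\eta$ must drop out is accurate.
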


\begin{remark}
    The integral in \eqref{eq:cost-omega} is not finite when $n=4$. 
    As we explained, this case will be treated separately in  Subsection \ref{ss:n=4}. 
\end{remark}

\smallskip

For later purposes we prove the following quantitative result on the solution of $\widetilde{\lp} \Psi = -\lp_1 U$ given by Lemma~\ref{l:Psi}.

\begin{lemma}\label{l:bd-Psi}
    Let $\Psi$ as above, then there exist $C, r_0 > 0$ such that for any $r \ge r_0$ we have
    \begin{equation}\label{eq:est-Psi}
    \abs{\Psi} \leq C r^{-(n-3)}, \quad \abs{\nabla_{g_0} \Psi}_{g_0} \leq 
    C r^{-(n-2)} \quad
    \text{and} \quad \abs{\nabla^2_{g_0} \Psi}_{g_0} \leq 
    C r^{-(n-1)}. 
    \end{equation}
\end{lemma}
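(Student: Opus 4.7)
The plan is to first establish the pointwise decay $|\Psi(r,y)| \leq C r^{-(n-3)}$ for $r$ large, and then derive the gradient and Hessian bounds via rescaled interior elliptic regularity on dyadic annuli. As a preliminary observation, the source $F = -\lp_1 U$ decays like $r^{-(n-1)}$ at infinity: from \eqref{eq:L1copy} together with $U(r) = O(r^{-(n-2)})$ and $U'(r) = O(r^{-(n-1)})$ as $r \to \infty$, one has $|F(r,y)| \leq C r^{-(n-1)}$ uniformly in $y$. Moreover, the zeroth-order coefficient of $\widetilde{\lp}$, namely $\mathcal{Y}_P \tfrac{n+2}{n-2} U^{4/(n-2)} = O(r^{-4})$, is subleading compared to $-a \Delta_{g_0}$ at infinity and acts as a perturbation. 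The expected decay $r^{-(n-3)}$ for $\Psi$ is then the one predicted by the heuristic $-\Delta u \sim r^{-(n-1)} \Rightarrow u \sim r^{-(n-3)}$.

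To make this decay rigorous, I would decompose in the $L^2(Y)$-eigenbasis of $-\Delta_{h_0}$ as in Remark~\ref{rmk:fourier-D}, writing $\Psi(r,y) = \sum_k \psi_k(r) u_k(y)$ and $F(r,y) = \sum_k f_k(r) u_k(y)$. Each coefficient satisfies an ODE whose leading part is the Euler operator $L_k := -a\bigl(\partial_r^2 + \tfrac{n-1}{r}\partial_r - \tfrac{\lambda_k}{r^2}\bigr)$, with Frobenius indices $\alpha_\pm(\lambda_k) = \tfrac{-(n-2)\pm\sqrt{(n-2)^2+4\lambda_k}}{2}$. A direct computation gives $L_k(r^{-(n-3)}) = a\bigl((n-3) + \lambda_k\bigr) r^{-(n-1)}$, so for $n \geq 4$ and every $\lambda_k \geq 0$ the function $r^{-(n-3)}$ is a positive supersolution of the leading operator; choosing $C_k$ proportional to $\|r^{n-1} f_k\|_\infty$ and applying the maximum principle on $\{r_0 \leq r \leq R\}$, together with the fact that $\psi_k$ vanishes at infinity (inherited from $\Psi \in \mathscr{D}^{1,2}_{g_0}$), yields $|\psi_k(r)| \leq C_k r^{-(n-3)}$, and then one lets $R \to \infty$. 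Uniformity in $k$, needed to sum the series, is obtained by recasting the problem in weighted spaces $r^{-(n-3)} L^\infty$, where $\widetilde{\lp}$ is an isomorphism onto $r^{-(n-1)}L^\infty$ after absorbing the $O(r^{-4})$ potential as a compact perturbation.

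For the derivative bounds I would exploit the dilation invariance of $g_0 = dr^2 + r^2 h_0$. Setting $\tilde\Psi_R(r,y) := R^{n-3}\Psi(Rr,y)$ on the fixed annulus $A = \{1/4 \leq r \leq 4\}$, the rescaled function satisfies an elliptic equation whose coefficients are uniformly bounded in $R$ and whose right-hand side $R^{n-1} F(Rr,y)/a$ is uniformly bounded by the decay of $F$; moreover $\tilde\Psi_R$ itself is uniformly bounded on $A$ by the previous step. Standard interior $W^{2,p}$ estimates and Sobolev embedding then bound $\tilde\Psi_R$ in $C^1(\{1/2\leq r \leq 2\})$ uniformly in $R$, and scaling back produces $|\nabla_{g_0}\Psi|_{g_0} \leq C R^{-(n-2)}$ and $|\nabla^2_{g_0}\Psi|_{g_0} \leq C R^{-(n-1)}$ for $r \sim R$.

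The main obstacle is obtaining the initial pointwise decay uniformly in the Fourier mode index $k$, since as $\lambda_k \to \infty$ the Frobenius exponents $\alpha_\pm(\lambda_k)$ move off to $\pm\infty$ and mode-by-mode barrier constants can in principle degenerate. This is resolved either by the weighted-space formulation outlined above, which treats the transversal Laplacian wholesale, or by a direct Moser iteration on dyadic annuli combined with the finite-energy property $\Psi \in \mathscr{D}^{1,2}_{g_0}$ and the decay of $F$, both of which produce the required bound without any explicit mode-by-mode analysis.
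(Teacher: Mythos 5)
Your proof is correct in substance but reaches the first estimate in \eqref{eq:est-Psi} by a genuinely different route. The paper performs a Kelvin inversion $\bar{\Psi}(\rho,y)=\rho^{-(n-2)}\Psi(\rho^{-1},y)$ and exploits the conformal covariance of $\lp_{g_0}$ on the scalar-flat cone: the decay problem at infinity becomes a local regularity problem near the origin, where $|\Delta_{g_0}\bar\Psi|\le C(\rho^{-3}+|\bar\Psi|)$ and one application of elliptic regularity yields $|\bar\Psi|\le C\rho^{-1}$, i.e.\ $|\Psi|\le Cr^{-(n-3)}$, in one stroke and with no mode-by-mode analysis. You instead work directly at infinity with the supersolution $r^{-(n-3)}$ of each Fourier-mode operator (your computation $L_k(r^{-(n-3)})=a((n-3)+\lambda_k)r^{-(n-1)}$ is right, and the constant improves as $\lambda_k$ grows, so the barrier is uniform in $k$ once $r_0$ is large enough to make the $O(r^{-4})$ potential subleading). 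This is more robust — it does not use that the cone is scalar-flat — at the cost of having to control the sum over modes. For the gradient and Hessian bounds your rescaling to a fixed annulus is exactly what the paper does.

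Two loose ends in your write-up are worth fixing. First, $\widetilde{\lp}$ is \emph{not} an isomorphism from $r^{-(n-3)}L^\infty$ onto $r^{-(n-1)}L^\infty$: the kernel element $V$ of \eqref{eq:V-function} decays like $r^{-(n-2)}$ and therefore lies in the source space. This is harmless for your purposes — $\Psi$ is already constructed by Lemma~\ref{l:Psi}, so you only need an a priori decay estimate, not solvability — but the statement as written is false. Relatedly, a single Moser pass on a dyadic annulus starting only from $\Psi\in\mathscr{D}^{1,2}_{g_0}$ gives $\sup_{A_R}|\Psi|=o(R^{-(n-2)/2})$, which for $n\ge 5$ is strictly weaker than $R^{-(n-3)}$; Moser iteration alone does not replace the barrier, it only supplies the qualitative sup-norm decay needed to run the maximum-principle comparison on $\{r\ge r_0\}$. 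Second, interior $W^{2,p}$ estimates plus Sobolev give $C^{1}$ control of $\tilde\Psi_R$, which suffices for the gradient bound but not for the pointwise Hessian bound; since the right-hand side of \eqref{eq:L1copy} is smooth on the regular part, you should invoke Schauder estimates on the fixed annulus to reach $C^{2}$ before scaling back.
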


\begin{proof}
To understand the behaviour of $\Psi$ near infinity it is convenient to use a Kelvin inversion, defining 
    \[
        \bar{\Psi}(\rho,y) = \rho^{-(n-2)} \Psi(\rho^{-1}, y).
    \]
Then it is a standard fact, due to the conformal covariance of $\lp_{g_0}$ on $\mathcal{C}(Y)$ (which is scalar-flat), that 
\begin{equation*}
\Delta_{g_0} \bar{\Psi}(\rho,y) = \rho^{-(n+2)} (\Delta_{g_0} \Psi)(\rho^{-1}, y).  
\end{equation*}
From~\eqref{eq:L1copy} and the compactness of $Y$ we easily deduce that there exist $C_0, r_0 > 0$ such that 
\[
    \abs{\Delta_{g_0} \Psi (r,y)} \le C_0 \big( r^{-(n-1)} + r^{-4} \abs{\Psi(r,y)} \big), \ \text{for any $r \ge r_0$}.
\]
Therefore $\bar{\Psi}$ satisfies 
\begin{align*}
    \abs{\Delta_{g_0} \bar{\Psi}(\rho, y)} & \le C_0
    \big( \rho^{-3} + \rho^{-(n-2)} \abs{\Psi(\rho^{-1},y)} \big) \\[1ex]
    & \le C_0
    \big( \rho^{-3} + \abs{\bar{\Psi}(\rho^{-1},y)} \big), \ \text{for any $\rho \le r_0^{-1}$}.     
\end{align*}
Since $\Psi$ is of class $\mathscr{D}^{1,2}_{g_0}$, then $\bar{\Psi}$ is in this class too. Using standard elliptic regularity theory 
it follows that there exists $C > 0$ such that 
\[
    \abs{\bar{\Psi}(\rho,y)} \leq C \rho^{-1}, \ \text{for any $\rho \le r_0^{-1}$},
\]
which corresponds to the first estimate in~\eqref{eq:est-Psi}. To get the second and the third ones, it is 
possible for example to scale variables in the equation for $\Psi$ from $B_{2R} \setminus B_R$ to $B_2 \setminus B_1$ for 
$R$ large, use elliptic regularity in this 
fixed annulus, and  scale the variables back. 
\end{proof}

\begin{remark} \label{rmk:continuita-Psi}
    Near the origin the right-hand side of~\eqref{eq:L1copy} is of class $L^p$ for any $p < n$. This implies that $\Psi$ is continuous on $\mathcal{C}(Y)$, see e.g. Proposition 2.7 in \cite{Behrndt}. Combining this fact with the first estimate in~\eqref{eq:est-Psi} we deduce that there exists $C > 0$ such that
    \begin{equation} \label{eq:cont-Psi-est}
        \abs{\Psi(r,y)} \le C (1+r) U(r), \ \text{for every $r > 0$}.
    \end{equation}
\end{remark}

\subsection{Role of the correction $\Psi$ in the expansion}\label{ss:role}

The goal of this section is to prove that the  expression  in the right-hand side of \eqref{eq:expansion-5} is negative-definite 
under the assumptions on $(Y,h_0)$ and on $\xi$ in Theorem~\ref{t:main}.


\smallskip 

Using the decomposition~\eqref{eq:decomposition-xi} and Proposition~\ref{prop:div-hess} we can rewrite the right-hand side of equation~\eqref{eq:L1copy} in the following way:
    \begin{equation} \label{eq:L1-2}
        \lp_1 U 
        =
        -\frac{2(n-1)}{n-2} U' \big( \Delta G + (n-1) f \big) - \frac{U}{r} \Big( (n-2) \Delta (f - G) + 2(n-1) \big( \Delta G + (n-1) f \big) \Big). 
    \end{equation}

As anticipated in the introduction, we want to split the solution of the equation $\widetilde{\lp} \Psi = - \lp_1 U$ in the sum of two terms, one of which explicit.

In order to find  a proper splitting, let us first consider the case $G = f$. In this situation the right-hand side of~\eqref{eq:L1-2} and the equation for $\Psi$ simplify to
    \begin{equation} \label{eq:eq-Psi-simplified}
        \widetilde{\lp} \Psi = 2(n-1) \bigg( \frac{U'}{n-2} + \frac{U}{r} \bigg) \Big( \Delta f + (n-1) f \Big),
    \end{equation}
which admits, by a direct computation, the explicit solution
    \begin{equation} \label{eq:explicit-Psi1}
    \Psi_1(r,y) := -\frac{n-2}{2} \bigg( \frac{U'(r)}{n-2} + \frac{U(r)}{r} \bigg) r^2 f(y), \quad \Psi_1 \in \mathscr{D}^{1,2}_{g_0}.
    \end{equation}

Now, we turn back to the general case and we write $\Psi = \Psi_1 + \Psi_2$, where $\Psi_1$ is the function defined by~\eqref{eq:explicit-Psi1} and $\Psi_2$ is unknown. Combining the identity~\eqref{eq:L1-2} with the fact that $\Psi_1$ is a solution of equation~\eqref{eq:eq-Psi-simplified} we obtained that $\Psi_2$ must solve the equation 
    \begin{equation} \label{eq:eq-Psi2}
         \widetilde{\lp} \Psi_2 = \bigg( \frac{2(n-1)}{n-2} U' + n \frac{U}{r} \bigg) \Delta (G - f).
    \end{equation}

For convenience we introduce a notation for the radial part of the functions appearing on the right-hand sides of equations~\eqref{eq:eq-Psi-simplified} and~\eqref{eq:eq-Psi2}:
\begin{equation}
    \label{eq:parti-radiali}
    q_1(r):=\frac{U'}{n-2} + \frac{U}{r} \quad \text{and} \quad q_2(r) := \frac{2(n-1)}{n-2} U' + n \frac{U}{r}.
\end{equation}

We want to estimate the first addend in the right-hand side of the identity~\eqref{eq:expansion-5}. Starting from~\eqref{eq:eq-Psi-simplified} and~\eqref{eq:explicit-Psi1}, it is easy to compute
    \begin{equation} \label{eq:contributo-Psi1}
        \int_{\mathcal{C}(Y)} \Psi_1 \widetilde{\lp} \Psi_1 \, d\mu_{g_0} = (n-1)(n-2) \int_0^{+\infty} q_1(r)^2 r^{n+1} \, dr \int_Y \Big( \abs{\nabla f}^2 - (n-1) f^2 \Big) \, d \mu_{h_0}.
    \end{equation}
In particular, using the identities stated in Proposition~\ref{prop:int-iden} we have
    \begin{equation} \label{eq:coefficiente-q1}
        \int_0^{+\infty} q_1(r)^2 r^{n+1}
        =
        \int_0^{+\infty} \bigg( \frac{\big(U'\big)^2 r^2}{(n-2)^2} + \frac{2 UU' r}{n-2} + U^2 \bigg) r^{n-1} \, dr = \frac{n-4}{4(n-1)} \omega,
    \end{equation}
where $\omega$ is the constant defined by~\eqref{eq:cost-omega}.

Moreover, $\Psi_1$ and $\Psi_2$ are orthogonal with respect to $\widetilde{\lp}$, indeed we have
\begin{equation}
    \label{eq:Psi1-Psi2}
    \int_{\mathcal{C}(Y)} \Psi_1 \widetilde{\lp} \Psi_2 \, d\mu_{g_0} = \frac{n-2}{2} \int_0^{+\infty} q_1(r) q_2(r) r^{n+1} \, dr \int_Y \Big( \abs{\nabla f}^2 - \nabla f \cdot \nabla G \Big) \, d \mu_{h_0},
\end{equation}
and turning our attention to the radial factor we notice that
\begin{align}
    \notag
    \int_0^{+\infty} q_1(r) q_2(r) r^{n+1} \, dr & = \int_0^{+\infty} \bigg( n U^2 + \frac{2(n-1)}{(n-2)^2} \big( U' \big)^2 r^2 + \Big( 1 + \frac{2n}{n-2} \Big) U U' r \bigg) r^{n-1} \, dr \\[1.5ex]
    \label{eq:radial}
    & = \underbrace{\bigg( n + \frac{2(n-1)}{(n-2)^2} \frac{n(n^2-4)}{4(n-1)} - \Big( 1 + \frac{2n}{n-2} \Big) \frac{n}{2}  \bigg)}_{=0} \omega = 0,
\end{align}
where we used again the integral identities in Proposition~\ref{prop:int-iden}.

Therefore, from~\eqref{eq:Psi1-Psi2} and~\eqref{eq:radial} we obtain
    \begin{equation} \label{eq:decomp-ortho-Psi}
        \int_{\mathcal{C}(Y)} \Psi \widetilde{\lp} \Psi \, d\mu_{g_0}
        =
        \int_{\mathcal{C}(Y)} \Psi_1 \widetilde{\lp} \Psi_1 \, d\mu_{g_0} + \int_{\mathcal{C}(Y)} \Psi_2 \widetilde{\lp} \Psi_2 \, d\mu_{g_0}.
    \end{equation}
Now, we want to estimate the contribution of the second addend in the right-hand side of the previous identity. Contrary to what happened for $\Psi_1$ there is not a simple and explicit formula for $\Psi_2$. However, combining Remark~\ref{rmk:fourier-D} with the fact that $\Psi_2$ is a solution of equation~\eqref{eq:eq-Psi2} it is easy to deduce that we can write
    \begin{equation} \label{eq:def-Psi2}
        \Psi_2(r,y) := \sum_{k=0}^{+\infty} c_k \psi_{\lambda_k}(r) u_k(y),
    \end{equation}
where $\{ u_k \}_{k \in \mathbb{N}}$ is an orthonormal basis of $L^2(Y, d\mu_{h_0})$ such that $-\Delta_{h_0} u_k = \lambda_k u_k$, for every $k \ge 1$ the function $\psi_{\lambda_k}$ is a solution of the following second order ordinary differential equation
    \begin{equation} \label{eq:psi-k}
        \mathcal{L}_{\lambda} \psi = \lambda a^{-1} q_2(r), \quad \lambda=\lambda_k,
    \end{equation}
where
    \begin{equation} \label{eq:psi-k-2}
        \mathcal{L}_{\lambda} \psi := - \psi''(r) - \frac{n-1}{r} \psi'(r) + \frac{\lambda}{r^2} \psi(r) - \frac{n(n+2)}{(1+r^2)^2} \psi(r),
    \end{equation}
and the coefficients $c_k$ are such that 
    \begin{equation} \label{eq:decomp-f-G}
        f - G = \sum_{k = 0}^{+\infty} c_k u_k.
    \end{equation}
We notice that $c_0 = 0$ because $f$ and $G$ have zero average, see \eqref{eq:decomposition-xi}, \eqref{eq:xi-zero-mean} and the lines after these formulas. 

Moreover, it is clear from the definition of $\Psi_2$ given by equation~\eqref{eq:eq-Psi2} and the decomposition~\eqref{eq:def-Psi2} that
    \begin{equation} \label{eq:key-estimate-Psi-2}
        \int_{\mathcal{C}(Y)} \Psi_2 \widetilde{\lp} \Psi_2 \, d\mu_{g_0}
        =
        \int_{\mathcal{C}(Y)} \Psi_2 q_2 \Delta (G-f) \, d\mu_{g_0} = \sum_{k = 1}^{+\infty} \lambda_k c_k^2 \underbrace{ \int_0^{+\infty} \psi_{\lambda_k}(r) q_2(r) r^{n-1} \, d r}_{=: \beta_k}.
    \end{equation}

At this point the following proposition is useful.
    \begin{proposition} \label{prop:monotonicity-beta}
    Let $n \ge 5$ and let $(Y,h_0)$ be a closed Einstein $(n-1)$-manifold with Einstein constant equal to $n-2$.
    For every integer number $k \ge 1$ we define the quantity
        \begin{equation} \label{eq:def-beta-k}
            \beta_k := \int_0^{+\infty} \psi_{\lambda_k}(r) q_2(r) r^{n-1} \, d r,
        \end{equation}
    where $\{ \lambda_k \}$ is the sequence of eigenvalues of $-\Delta_{h_0}$ and $\psi_{\lambda_k}$ and $q_2$ are defined respectively by~\eqref{eq:psi-k-2} and by~\eqref{eq:parti-radiali}.
    Then, $\{ \beta_k \}_{k \ge 1}$ is a strictly increasing sequence and
        \begin{equation} \label{eq:beta-1-ineq}
            \beta_1 \ge \frac{n-2}{4} \omega,
        \end{equation}
    where $\omega$ is the constant defined by~\eqref{eq:cost-omega}.
    
    Moreover, the equality case in~\eqref{eq:beta-1-ineq} holds if and only if $(Y,h_0)$ is isometric to the $(n-1)$-dimensional sphere endowed with its standard metric. 
    \end{proposition}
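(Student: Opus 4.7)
The key observation is that $\beta_\lambda$, regarded as a function of the continuous parameter $\lambda > 0$, is determined solely by the universal one-dimensional ODE $\mathcal{L}_\lambda \psi_\lambda = \lambda a^{-1} q_2$ and by the explicit profile $q_2$; it is independent of the particular link $(Y,h_0)$. Both assertions of the proposition will follow from studying this universal family combined with the Lichnerowicz-Obata bound $\lambda_1 \geq n-1$ (with equality iff sphere). I would first differentiate the defining equation in $\lambda$ to obtain $\mathcal{L}_\lambda \dot\psi_\lambda + \psi_\lambda / r^2 = a^{-1} q_2$. Pairing with $\psi_\lambda$ in $L^2(r^{n-1}dr)$, using self-adjointness of $\mathcal{L}_\lambda$ and substituting back the equation for $\psi_\lambda$, yields the identity
\begin{equation*}
\lambda^2 \dot\beta_\lambda \,=\, a \int_0^{+\infty}\Bigl[(\psi_\lambda')^2 - n(n+2)\,\psi_\lambda^2 (1+r^2)^{-2}\Bigr] r^{n-1}\, dr.
\end{equation*}

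To extract strict positivity of the right-hand side, I would interpret $\Psi(r,y) := \psi_\lambda(r)$ as a radial function on $\mathcal{C}(Y)$; using $\mathcal{Y}_P\tfrac{n+2}{n-2}U^{\frac{4}{n-2}} = a\,n(n+2)(1+r^2)^{-2}$ one recognizes the displayed integral as $\int_{\mathcal{C}(Y)}\Psi\,\widetilde{\lp}\,\Psi\,d\mu_{g_0}/(a\,\mathrm{Vol}_{h_0}(Y))$. Since $U$ minimizes the Yamabe functional on the cone, $\widetilde{\lp}$ is positive semi-definite on $\mathscr{D}^{1,2}_{g_0}$, and by Lemma~\ref{lemma:kernel-linearized} its radial kernel is spanned by $V$ (the extra radial kernel element appearing only for $\lambda=0$). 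A pointwise check at $r=1$, where $V(1)=0$ while $q_2(1)=c_Y\,2^{-(n-2)/2}\neq 0$, rules out $\psi_\lambda\propto V$ for $\lambda>0$ (a proportionality would force $cV/r^2 = a^{-1}q_2$, impossible at $r=1$). Hence $\dot\beta_\lambda>0$, and since $\{\lambda_k\}$ is strictly increasing, so is $\{\beta_k\}_{k\geq 1}$.

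Next I would evaluate $\beta_{n-1}$. By Remark~\ref{rmk:U'-w-eing} we have $\mathcal{L}_{n-1} U' = 0$ universally, and a short integration by parts using the identities of Proposition~\ref{prop:int-iden} gives $\int_0^{+\infty} q_2 U' r^{n-1}dr = 0$; Fredholm's alternative then supplies $\psi_{n-1}\in\mathscr{D}^{1,2}_{g_0}$ solving $\mathcal{L}_{n-1}\psi_{n-1} = \tfrac{n-2}{4}q_2$, and the value $\beta_{n-1}:=\int q_2 \psi_{n-1} r^{n-1}dr$ is unambiguous since any kernel ambiguity integrates to zero. To pin the value down to $\tfrac{n-2}{4}\omega$, I would conformally identify $(\mathcal{C}(Y),g_0)$ with (an open half of) the spherical suspension $(S(Y),d\theta^2 + \sin^2\theta\,h_0)$ via $r=\tan(\theta/2)$. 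Under this identification $U$ becomes a constant, $\widetilde{\lp}$ becomes a constant multiple of $-\Delta_{S(Y)} - n$, and on sections with a fixed first link harmonic the problem reduces to a resonant one-dimensional Gegenbauer-type ODE in $\theta$ that can be solved by quadrature; feeding the solution back into $\int q_2 \psi_{n-1} r^{n-1}dr$ and reducing the remaining integrals by Proposition~\ref{prop:int-iden} produces exactly the coefficient $\tfrac{n-2}{4}$.

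Finally, combining the Lichnerowicz-Obata bound with the strict monotonicity of Step 2 and the value from Step 3 gives $\beta_1 = \beta_{\lambda_1} \geq \beta_{n-1} = \tfrac{n-2}{4}\omega$, with equality iff $\lambda_1 = n-1$, that is iff $(Y,h_0)=(S^{n-1},g_{S^{n-1}})$. \textbf{The hardest step is the third}: after the conformal reduction to the suspension one still has to solve the mildly resonant 1D inhomogeneous ODE explicitly enough to propagate all the constants through the various changes of variable and land precisely on $\tfrac{n-2}{4}$, rather than on a nearby but incorrect multiple.
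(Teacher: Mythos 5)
Your overall architecture (differentiate $\beta$ in $\lambda$, obtain $\lambda^2\dot\beta_\lambda = a\int_0^{+\infty}\bigl[(\psi_\lambda')^2 - n(n+2)\psi_\lambda^2(1+r^2)^{-2}\bigr]r^{n-1}\,dr$, evaluate at $\lambda=n-1$, and invoke Lichnerowicz--Obata) matches the paper's. But your justification of $\dot\beta_\lambda>0$ has a genuine gap: the operator $\widetilde{\lp}$ is \emph{not} positive semi-definite on $\mathscr{D}^{1,2}_{g_0}$. Indeed $\widetilde{\lp}U = -\tfrac{4}{n-2}\mathcal{Y}_P U^{\frac{n+2}{n-2}}$, so $\int_{\mathcal{C}(Y)} U\widetilde{\lp}U\,d\mu_{g_0}<0$, and since $U$ is radial the quadratic form $\int\bigl[(\psi')^2 - n(n+2)\psi^2(1+r^2)^{-2}\bigr]r^{n-1}\,dr$ is genuinely indefinite even on radial functions (it is also negative on $\psi=U'$, which satisfies $r^2\mathcal{L}_0 U' = -(n-1)U'$). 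Minimality of $U$ for the Yamabe quotient only gives the weaker Hardy-type constant $n(n-2)$ on all of $\mathscr{D}^{1,2}_{g_0}$, as the paper notes in the remark after Lemma~\ref{lemma:better-constant}. The missing ingredient is the normalization $\int_0^{+\infty}\psi_\lambda U' r^{n-3}\,dr=0$ (Lemma~\ref{lemma:exist-ortho}), which places $\psi_\lambda$ in the space $\mathscr{E}_{\mathsf{rad}}$ where the sharpened constant $n(n+2)$ does hold (Lemma~\ref{lemma:better-constant}, proved by Sturm--Liouville: $U'$ is the ground state of $r^2\mathcal{L}_0$ in $L^2(r^{n-3}dr)$ with eigenvalue $-(n-1)$, and $V$ is the second eigenfunction with eigenvalue $0$). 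Your equality-case observation ($\psi_\lambda$ cannot be proportional to $V$) is correct and is the same device the paper uses, but it only becomes relevant once nonnegativity is established on the right subspace.

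Your Step 3 is also only a plan, not a proof: you propose a conformal identification with the spherical suspension and a Gegenbauer-type ODE solved ``by quadrature,'' and you explicitly flag that the constant $\tfrac{n-2}{4}$ has not actually been propagated through. The paper avoids all of this by exhibiting the closed-form solution $\widehat{\psi}(r)=\bar{U}'(r)\tfrac{r^2+2\ln r}{8}$ of $\mathcal{L}_{n-1}\psi=(n-1)a^{-1}q_2$ (Remark~\ref{remark:psi-n-1}), after which $\beta(n-1)=\tfrac{n-2}{4}\omega$ is a direct computation; note also that at the resonant value $\lambda=n-1$ the solution is only unique after imposing the same orthogonality to $U'$, which is what makes $\beta(n-1)$ well defined. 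To repair your argument you would need to (i) impose and verify the weighted orthogonality of $\psi_\lambda$ to $U'$, (ii) prove the improved Poincar\'e inequality on that subspace, and (iii) actually carry out the computation of $\beta(n-1)$.
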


In order to prove the previous proposition we need the following two lemmas.
    \begin{lemma} \label{lemma:exist-ortho}
        Let $n \ge 5$: then for every $\lambda \ge n-1$ there exists a unique solution $\psi_{\lambda} \in \mathscr{D}^{1,2}_{g_0}$ of equation~\eqref{eq:psi-k} such that
            \begin{equation} \label{eq:ortho-weighted}
                \int_0^{+\infty} \psi_{\lambda}(r) U'(r) r^{n-3} \, dr = 0.
            \end{equation}
    \end{lemma}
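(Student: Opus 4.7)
The plan is to frame~\eqref{eq:psi-k} as a Fredholm problem on the weighted radial Sobolev space
\[
    X := \Big\{ \psi \in L^2_{\mathrm{loc}}(0,+\infty) \, : \, \int_0^{+\infty} \big((\psi')^2 + \psi^2/r^2\big) r^{n-1}\, dr < \infty \Big\},
\]
which coincides (up to the factor $\mathrm{Vol}_{h_0}(Y)$) with the restriction of $\mathscr{D}^{1,2}_{g_0}$ to the $\lambda$-angular mode of $-\Delta_{h_0}$. On $X$ the bilinear form associated to $\mathcal{L}_\lambda$ splits as the coercive Hardy part $\int_0^{+\infty}\big((\psi')^2 + \lambda \psi^2/r^2\big) r^{n-1}\, dr$ (coercive since $\lambda \ge n-1 > 0$) minus the $L^2$-pairing against the bounded, rapidly decaying weight $n(n+2)(1+r^2)^{-2}$. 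Because the embedding $X \hookrightarrow L^2\big((1+r^2)^{-2} r^{n-1} dr\big)$ is compact, $\mathcal{L}_\lambda$ is a self-adjoint Fredholm operator of index zero on $X$ and Fredholm's alternative applies.

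Next I would identify $\ker \mathcal{L}_\lambda |_X$. By Remark~\ref{rmk:U'-w-eing} one has $\mathcal{L}_{n-1}(U') = 0$, and a direct check gives $U' \in X$. For general $\lambda$ the indicial roots at $r=0$ are $\alpha_{\pm}(\lambda) = \tfrac{1}{2}\big(-(n-2) \pm \sqrt{(n-2)^2 + 4\lambda}\big)$, and a Kelvin-type change of variable yields the analogous roots at infinity; in each end only a one-parameter family of homogeneous solutions belongs to $X$. A Sturm-oscillation comparison in $\lambda$, anchored at the explicit solution $U'$ available at $\lambda = n-1$, then shows $\ker \mathcal{L}_\lambda |_X = \{0\}$ for every $\lambda > n-1$ and $\ker \mathcal{L}_{n-1}|_X = \mathbb{R} \cdot U'$.

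Third, I would exploit the algebraic identity $\mathcal{L}_\lambda U' = \mathcal{L}_{n-1} U' + (\lambda - (n-1)) U'/r^2 = (\lambda-(n-1))\, U'/r^2$. Testing the equation $\mathcal{L}_\lambda \psi = \lambda a^{-1} q_2$ against $U'$ with weight $r^{n-1}$ (boundary terms vanish by the $X$-decay of $U'$ and $\psi$) gives the balance
\[
    (\lambda - (n-1)) \int_0^{+\infty} \psi\, U'\, r^{n-3}\, dr \;=\; \lambda\, a^{-1} \int_0^{+\infty} q_2(r) U'(r) r^{n-1}\, dr.
\]
The right-hand side vanishes: inserting $U(r) = c_Y(1+r^2)^{-(n-2)/2}$ rewrites $\int q_2 U' r^{n-1} dr$ as a linear combination of two Beta integrals that, via the ratio $B(\tfrac{n+2}{2},\tfrac{n-2}{2}) = \tfrac{n}{2(n-1)}\, B(\tfrac{n}{2},\tfrac{n-2}{2})$, cancel identically. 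For $\lambda > n-1$ Fredholm produces a unique $\psi_\lambda \in X$ solving~\eqref{eq:psi-k}, and the displayed balance then forces~\eqref{eq:ortho-weighted} automatically. At the resonant value $\lambda = n-1$ the same cancellation is precisely the Fredholm solvability condition; the solution is unique modulo $\mathbb{R} \cdot U'$, and~\eqref{eq:ortho-weighted} (now an independent constraint) pins down a unique representative.

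The main obstacle I anticipate is the kernel triviality for generic $\lambda > n-1$ in the second step: the ODE is not explicitly solvable away from $\lambda = n - 1$, and one cannot simply invoke Lemma~\ref{lemma:kernel-linearized}, which concerns only genuine Laplacian eigenvalues on specific links. The Sturm-comparison argument combined with the indicial-root analysis at both ends is what overcomes this. A smaller but essential technical check is the Beta-integral cancellation producing $\int_0^{+\infty} q_2 U' r^{n-1}\, dr = 0$, which simultaneously underpins the automatic orthogonality for $\lambda > n-1$ and the Fredholm solvability at the threshold $\lambda = n-1$.
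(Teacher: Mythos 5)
Your proposal is correct and follows essentially the same strategy as the paper: a Fredholm/compactness argument on the radial space $\mathscr{D}^{1,2}_{g_0,\mathsf{rad}}$, identification of the kernel as $\mathbb{R}\cdot U'$ at the threshold $\lambda=n-1$ and as trivial for $\lambda>n-1$, and then the orthogonality \eqref{eq:ortho-weighted} obtained by testing against $U'$ via $\mathcal{L}_\lambda U'=(\lambda-n+1)U'/r^2$ together with the cancellation $\int_0^{+\infty}q_2U'r^{n-1}\,dr=0$ (your Beta-function ratio $B(\tfrac{n+2}{2},\tfrac{n-2}{2})=\tfrac{n}{2(n-1)}B(\tfrac{n}{2},\tfrac{n-2}{2})$ checks out). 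The one sub-step you handle differently is kernel triviality for $\lambda>n-1$: you propose indicial roots at the two ends plus a Sturm oscillation comparison anchored at $U'$, whereas the paper argues variationally, introducing the $\lambda$-dependent inner product $\langle\cdot,\cdot\rangle_{\mathscr{D}_\lambda}$ and the compact operator $T_\lambda$, showing $\norm{T_{n-1}}_{\mathscr{D}_{n-1}}=1$ is attained by the sign-definite $U'$ (hence a simple, largest eigenvalue) and that $\norm{T_\lambda}_{\mathscr{D}_\lambda}$ decreases strictly in $\lambda$, which gives coercivity of $\mathrm{Id}-T_\lambda$ at once. Both routes work; the paper's avoids the ODE asymptotics entirely and yields invertibility (not just a trivial kernel plus Fredholm index zero) in one stroke, so if you pursue your version you should spell out the Sturm comparison carefully, since it is the only place where your argument is not yet a complete proof.
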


    \begin{proof}
        If $\lambda=\lambda_k$ and $(Y,h_0)$ is different from the standard sphere, then the existence and uniqueness of a solution $\psi_{\lambda} \in \mathscr{D}^{1,2}_{g_0}$ of equation~\eqref{eq:psi-k} is an immediate consequence of Remark~\ref{rmk:fourier-D} and Lemma~\ref{l:Psi} applied with $F(r,y) = q_2(r) u_k(y)$. We notice that by Lichnerowicz's eigenvalue estimate we have $\lambda_k \ge \lambda_1 > n-1$.

        For a generic $\lambda \ge n-1$ the argument to prove the existence of a solution is very similar to that used in Lemma~\ref{l:Psi}, so we will only sketch it.
        Let $\mathscr{D}^{1,2}_{g_0, \mathsf{rad}}$ be the subspace of $\mathscr{D}^{1,2}_{g_0}$ consisting of functions with radial symmetry.
        We consider the scalar product $\langle \cdot, \cdot \rangle_{\mathscr{D}_\lambda}$ on~$\mathscr{D}^{1,2}_{g_0, \mathsf{rad}}$ defined as follows:
            \begin{equation} \label{eq:equival-lambda}
                \langle \psi , \phi \rangle_{\mathscr{D}_\lambda} := \int_{0}^{+\infty} \psi'(r) \phi'(r) r^{n-1} \, dr + \lambda \int_{0}^{+\infty} \psi(r) \phi(r) r^{n-3} \, dr,
                \qquad
                \forall \psi, \phi \in \mathscr{D}^{1,2}_{g_0, \mathsf{rad}}
            \end{equation}
        which is equivalent to the standard scalar product of $\mathscr{D}^{1,2}_{g_0, \mathsf{rad}}$ thanks to Hardy's inequality.
        
         In the same way as in Lemma~\ref{l:Psi}, for every $\lambda \ge n-1$, there exists a positive continuous self-adjoint compact linear operator $T_{\lambda}$ from $\mathscr{D}^{1,2}_{g_0, \mathsf{rad}}$ to itself such that
            \[
                \langle T_{\lambda} \psi, \phi \rangle_{\mathscr{D}_\lambda}
                =
                n(n+2) \int_0^{+\infty} \frac {\psi(r) \phi(r)}{(1+r^2)^2} r^{n-1} \, dr,
                \qquad
                \forall \psi, \phi \in \mathscr{D}^{1,2}_{g_0, \mathsf{rad}}.
            \]

        It is clear by the definition~\eqref{eq:equival-lambda} that $\norm{T_{\lambda}}_{\mathscr{D}_\lambda} < \norm{T_{n-1}}_{\mathscr{D}_{n-1}}$ for every $\lambda > n-1$, where $\norm{ \, \cdot \,}_{\mathscr{D}_\lambda}$ denotes the operator norm induced by $\langle \cdot, \cdot \rangle_{\mathscr{D}_\lambda}$. Moreover, we claim that $\norm{T_{n-1}}_{\mathscr{D}_{n-1}} = 1$, indeed 
        from Remark~\ref{rmk:U'-w-eing} it is easy to check that $T_{n-1} U' = U'$, and combining this fact with the property of $U'$ having constant sign we deduce that
            \[
                \norm{T_{n-1}}_{\mathscr{D}_{n-1}} = \max_{\substack{\psi \in \mathscr{D}^{1,2}_{g_0, \mathsf{rad}} \\ \psi \neq 0}} \frac{\langle T_{n-1} \psi, \psi \rangle_{\mathscr{D}_{n-1}}}{\norm{\psi}_{\mathscr{D}_{n-1}}^2} = \frac{\langle T_{n-1} U', U' \rangle_{\mathscr{D}_{n-1}}}{\norm{U'}_{\mathscr{D}_{n-1}}^2} = 1.
            \]
        We remark here that the fact that $U'$ has constant sign in $(0,+\infty)$ implies that $\mu = 1$ is a simple (and also the largest) eigenvalue of $T_{n-1}$, in particular the kernel of the operator $\mathrm{Id} - T_{n-1}$ is spanned by $U'$. 

        The previous discussion implies that for every $\lambda > n-1$ the operator $\mathrm{Id}-T_{\lambda}$ is self-adjoint and coercive in $\mathscr{D}^{1,2}_{g_0, \mathsf{rad}}$ and thus invertible. This is equivalent to say that for every $\lambda > n-1$ there exists a unique solution $\psi_{\lambda} \in \mathscr{D}^{1,2}_{g_0, \mathsf{rad}}$ of equation~\eqref{eq:psi-k}. 

        Now, we prove that property~\eqref{eq:ortho-weighted} holds for $\lambda > n-1$. From Remark~\ref{rmk:U'-w-eing} it is clear that
            \[
                \mathcal{L}_{\lambda} U' = (\lambda - n+1 ) \frac{U'}{r^2}
            \]
        and combining this with the fact that $\mathcal{L}_{\lambda}$ is self-adjoint in $L^2(r^{n-1} dr)$ we obtain
            \[
                \int_0^{+\infty} \psi_{\lambda} U' r^{n-3} \, dr = \frac{1}{\lambda - n+1 } \int_0^{+\infty} \mathcal{L}_{\lambda} \psi_{\lambda} U' r^{n-1} \, dr
                =
                \frac{\lambda}{a(\lambda - n+1)} \int_0^{+\infty} q_2 U' r^{n-1} \, dr = 0,
            \]
        where we used that $\psi_{\lambda}$ is the solution of~\eqref{eq:psi-k} and a direct computation for the last identity.

        We are left with the case $\lambda = n-1$. Let $p \in \mathscr{D}^{1,2}_{g_0, \mathsf{rad}}$ be such that
            \[
                \langle p, \phi \rangle_{\mathscr{D}_{n-1}}
                =
                \frac{n-2}{4} \int_0^{+\infty} q_2 \phi r^{n-1} \, dr,
                \qquad
                \forall \phi \in \mathscr{D}^{1,2}_{g_0, \mathsf{rad}},
            \]
        then by Fredholm's alternative there exists a solution of $\psi - T_{n-1} \psi = p$ if and only if $p$ is orthogonal to the kernel of $\mathrm{Id}-T_{n-1}$ in $\mathscr{D}^{1,2}_{g_0, \mathsf{rad}}$. Therefore, for what we have already seen we have only to check that $p$ is orthogonal to $U'$ in $\mathscr{D}^{1,2}_{g_0, \mathsf{rad}}$, and this follows immediately from the definition of $p$ and a direct computation. Clearly, in this case the solution is not unique, but it is unique if we want property~\eqref{eq:ortho-weighted} to hold.
    \end{proof}

\begin{remark} \label{remark:psi-n-1} 
    Let $\widehat{\psi}$ be the function defined by
        \begin{equation} \label{eq:psi-n-1}
            \widehat{\psi}(r)
            :=
            \bar{U}'(r) \frac{r^2 + 2 \ln r}{8},
        \end{equation}
    where $\bar{U}$ is the function introduced in~\eqref{eq:U-def}.
    Then, by a direct computation $\widehat{\psi}$ is a solution of equation~\eqref{eq:psi-k} with $\lambda = n-1$. In particular, this implies that
    $\psi_{n-1} = \widehat{\psi} + c\bar{U}'$ for a certain real number $c$.
\end{remark}

The condition~\eqref{eq:ortho-weighted} suggests to consider the following space of functions
    \begin{equation} \label{eq:def-E}
        \mathscr{E}_{\mathsf{rad}} = \bigg\{ \psi \in \mathscr{D}^{1,2}_{g_0, \mathsf{rad}} : \int_0^{+\infty} \psi(r) U'(r) r^{n-3} \, dr = 0 \bigg\}.
    \end{equation}
A remarkable property of the space $\mathscr{E}_{\mathsf{rad}}$ is that it enjoys a weighted Poincaré inequality, with respect to the weight $(1+r^2)^{-2}$, with a slightly better constant compared to that of $\mathscr{D}^{1,2}_{g_0}$. More precisely, we have the following lemma.
    \begin{lemma} \label{lemma:better-constant}
        For every $\psi \in \mathscr{E}_{\mathsf{rad}}$ it holds
            \begin{equation} \label{eq:better-constant}
                n(n+2) \int_0^{+\infty} \frac{\psi(r)^2}{(1+r^2)^2} r^{n-1} \, dr \le \int_0^{+\infty} (\psi'(r))^2 r^{n-1} \, dr.
            \end{equation}
        Moreover, the equality case in~\eqref{eq:better-constant} holds if and only if $\psi$ is a multiple of the function $V$ defined by~\eqref{eq:V-function}.
    \end{lemma}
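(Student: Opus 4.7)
The plan is to view the inequality as the non-negativity of the quadratic form $\int_0^\infty \psi \mathcal{L}_0 \psi\, r^{n-1}\, dr$, where $\mathcal{L}_0$ is the operator~\eqref{eq:psi-k-2} with $\lambda = 0$ (coinciding with $\widetilde{\lp}/a$ on radial functions). Since $\widetilde{\lp} V = 0$, equality at $\psi = V$ is immediate, so the issue is coercivity on $\mathscr{E}_{\mathsf{rad}}$. I will analyse the generalized eigenvalue problem $\int \psi'\phi'\, r^{n-1}\, dr = \mu\, n(n+2) \int \psi\phi\, r^{n-1}/(1+r^2)^2\, dr$ on $\mathscr{D}^{1,2}_{g_0,\mathsf{rad}}$ equipped with the weighted scalar product $\langle \cdot, \cdot\rangle_{\mathcal{H}} := n(n+2)\int \cdot \, r^{n-1}/(1+r^2)^2\, dr$.

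First, from $-\Delta_{g_0} U = n(n-2)U/(1+r^2)^2$ and $-\Delta_{g_0} V = n(n+2)V/(1+r^2)^2$, I identify $U$ and $V$ as eigenfunctions with eigenvalues $\mu_0 = (n-2)/(n+2) < 1$ and $\mu_1 = 1$, respectively. By Sturm–Liouville oscillation theory in $r$, since $U$ is strictly positive and $V$ (from its explicit form $V = c_Y(n-2)(1-r^2)/(2(1+r^2)^{n/2})$) has precisely one interior zero at $r = 1$, these are the first two eigenvalues, and the remaining eigenvalues satisfy $\mu_k > 1$. This produces a positive spectral gap above $\mu_1 = 1$.

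Next, I verify by a direct Beta-function computation that $\int_0^\infty V U' r^{n-3}\, dr = 0$: it reduces to a multiple of $\int_0^\infty r^{n-2}(1-r^2)/(1+r^2)^n\, dr$, which vanishes by the symmetry $B((n-1)/2, (n+1)/2) = B((n+1)/2, (n-1)/2)$. Meanwhile $\alpha := \int_0^\infty U U' r^{n-3}\, dr = -\tfrac{n-3}{2}\int_0^\infty U^2 r^{n-4}\, dr \neq 0$ for $n \ge 5$, so $V \in \mathscr{E}_{\mathsf{rad}}$ while $U \notin \mathscr{E}_{\mathsf{rad}}$. Decomposing $\psi = c_0 U + c_1 V + \tilde\psi$ with $\tilde\psi$ $\mathcal{H}$-orthogonal to $\mathrm{span}(U,V)$, the defining constraint of $\mathscr{E}_{\mathsf{rad}}$ becomes $c_0 \alpha + \int \tilde\psi\, U'\, r^{n-3}\, dr = 0$, so $c_0$ is controlled by a weighted dual pairing with $\tilde\psi$.

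Finally, I combine the unfavourable term $(\mu_0 - 1)c_0^2\|U\|^2_{\mathcal{H}}$ with the gained positivity $\mathcal{B}(\tilde\psi,\tilde\psi) - \|\tilde\psi\|^2_{\mathcal{H}} \ge (\mu_2 - 1)\|\tilde\psi\|^2_{\mathcal{H}}$, via a Cauchy–Schwarz bound on $c_0$ in terms of the Riesz representative $g = U'(1+r^2)^2/(n(n+2)\,r^2)$ of the functional $\psi \mapsto \int \psi U' r^{n-3}\, dr$, to obtain the inequality, with equality if and only if $c_0 = 0$ and $\tilde\psi = 0$, i.e., $\psi$ is a constant multiple of $V$. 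The main obstacle I anticipate is the quantitative balance in this last step: the naive Cauchy–Schwarz estimate $|c_0| \le \alpha^{-1}\|Pg\|_{\mathcal{H}}\|\tilde\psi\|_{\mathcal{H}}$ must combine favourably with the spectral gap, and an alternative route (which I would attempt if needed) is to exploit the conformal equivalence between the cone and a round sphere to reduce the radial inequality to the standard Poincaré inequality $\int|\nabla\phi|^2 \ge n\int \phi^2$ on $S^n$ orthogonal to an appropriate one-dimensional subspace.
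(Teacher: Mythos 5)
Your proposal contains a genuine gap, and it is exactly the one you flag yourself at the end. You set up the generalized eigenvalue problem with the weight $(1+r^2)^{-2}$, for which $U$ and $V$ are eigenfunctions with eigenvalues $\tfrac{n-2}{n+2}$ and $1$. But the constraint defining $\mathscr{E}_{\mathsf{rad}}$ is orthogonality to $U'$ in $L^2(r^{n-3}\,dr)$, which is \emph{not} orthogonality to the ground state $U$ in your weighted inner product $\mathcal{H}$. Consequently your decomposition $\psi = c_0 U + c_1 V + \tilde\psi$ retains a nonzero ground-state component $c_0$, whose contribution to the quadratic form is strictly negative (since $\mu_0<1$), and you must show that this loss is dominated by the gain $(\mu_2-1)\|\tilde\psi\|_{\mathcal{H}}^2$ coming from the spectral gap above $V$. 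The Cauchy--Schwarz route you propose for this is lossy (the extremizer of Cauchy--Schwarz is not the extremizer of the spectral bound), the resulting numerical inequality is neither verified nor obviously true, and you do not carry out the computation; the alternative via conformal equivalence with $S^n$ is likewise only mentioned. As written, the argument does not close.

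The fix is to choose the weight so that the constraint \emph{is} orthogonality to the ground state: consider the eigenvalue problem for $r^2\mathcal{L}_0$ in $L^2(r^{n-3}\,dr)$, i.e.\ with weight $r^{-2}$ rather than $(1+r^2)^{-2}$. By Remark~\ref{rmk:U'-w-eing}, $r^2\mathcal{L}_0 U' = -(n-1)U'$, and since $U'$ has constant sign it is the first eigenfunction in $L^2(r^{n-3}\,dr)$; the condition $\int_0^{+\infty}\psi\,U'\,r^{n-3}\,dr=0$ defining $\mathscr{E}_{\mathsf{rad}}$ is then precisely orthogonality to the ground state, so the min--max principle gives
\begin{equation}
\int_0^{+\infty}\Bigl((\psi')^2 - \frac{n(n+2)}{(1+r^2)^2}\psi^2\Bigr)r^{n-1}\,dr \;\ge\; \mu_2\int_0^{+\infty}\psi^2\,r^{n-3}\,dr
\end{equation}
with no balancing step at all. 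Since $r^2\mathcal{L}_0 V=0$ and $V$ has exactly two nodal domains, Sturm--Liouville theory identifies $V$ as the second eigenfunction, so $\mu_2=0$ and the inequality follows; equality forces $\widetilde{\lp}\psi=0$, whence $\psi$ is a multiple of $V$ by Lemma~\ref{lemma:kernel-linearized}. Your spectral identifications of $U$, $V$ and the nodal-counting argument are correct and are the same ingredients the paper uses; the only, but essential, difference is the choice of weight aligning the eigenvalue problem with the orthogonality constraint.
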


    \begin{remark}
    In general for functions $\psi \in \mathscr{D}^{1,2}_{g_0}$ an inequality of the type of~\eqref{eq:better-constant} still holds but with the worse constant $n(n-2)$, see e.g. Proposition~2 in~\cite{BBDGV} or Lemma~A1 in~\cite{Bianchi-Egnell}.
    \end{remark}

    \begin{proof}
        Let $\mathcal{L}_0$ be the operator defined by~\eqref{eq:psi-k-2} with $\lambda = 0$, which coincides with the radial part of the linearized operator $ a^{-1} \widetilde{\lp}$ defined by~\eqref{eq:V-ker-L}. We know from Remark~\ref{rmk:U'-w-eing} that $U'$ is an eigenfunction of $r^2 \mathcal{L}_0$ with corresponding eigenvalue equal to $-(n-1)$. Moreover, the fact that $U'$ does not change sign in $(0,+\infty)$ implies that it is the first eigenfunction in $L^2(r^{n-3} dr)$. In particular, from the definition of $\mathscr{E}_{\mathsf{rad}}$ it follows that for every $\psi \in \mathscr{E}_{\mathsf{rad}}$ we have
            \begin{equation*} 
                \int_0^{+\infty} \bigg( (\psi'(r))^2 - 
                n(n+2) \frac{\psi(r)^2}{(1+r^2)^2} \bigg)
                r^{n-1} \, dr \ge \mu_2 \int_0^{+\infty} \psi(r)^2 r^{n-3} \, dr,
            \end{equation*}
        where $\mu_2$ is the second eigenvalue of $r^2 \mathcal{L}_0$.

        At this point we claim that $\mu_2 = 0$, from which the conclusion will follow. We observe that $r^2 \mathcal{L}_0 V = 0$, where $V$ is the function defined by~\eqref{eq:V-function}, and also that the set $\{ V \neq 0 \}$ has exactly two connected components. From the Sturm-Liouville theory (see for example~\cite{Courant-Hilbert}, pages 454-455, whose 
        argument can be easily adapted to our case) this implies that $V$ is an eigenfunction corresponding to the second eigenvalue $\mu_2$, and therefore $\mu_2 = 0$. 

        Moreover, if $\psi \in \mathscr{E}_{\mathsf{rad}}$ is a function that satisfies the equality case in~\eqref{eq:better-constant}, then necessarily $r^2 \mathcal{L}_0 \psi = 0$ which is equivalent to $\widetilde{\lp} \psi = 0$. Finally, from Lemma~\ref{lemma:kernel-linearized} and the discussion above it we know that $\psi$ must be a multiple of $V$.
    \end{proof}


\begin{proof}[Proof of Proposition~\ref{prop:monotonicity-beta}]
    For every $\lambda \ge n-1$ we define the quantity
    \begin{equation} \label{eq:def-beta-lambda}
            \beta(\lambda) := \int_0^{+\infty} \psi_{\lambda}(r) q_2(r) r^{n-1} \, d r,
    \end{equation}
    where $\psi_{\lambda}$ is the function given by Lemma~\ref{lemma:exist-ortho}.
    In this way we have that $\beta_k = \beta(\lambda_k)$. Now, we prove that the function $\beta$ is strictly increasing.
    
    It is well known that from the existence and uniqueness of $\psi_{\lambda}$ it follows that 
    the map $\lambda \mapsto \psi_{\lambda}$ is differentiable with respect to $\lambda$; we will denote its derivative with the symbol $v_{\lambda}$. Now, differentiating both sides of equation~\eqref{eq:psi-k} with respect to $\lambda$ we obtain that $v_{\lambda}$ is a solution of the following equation:
        \begin{equation} \label{eq:v-lambda}
            \mathcal{L}_{\lambda} v_{\lambda} = a^{-1} q_2 - \frac{\psi_{\lambda}}{r^2} =
            \frac{\mathcal{L}_{\lambda} \psi_{\lambda}}{\lambda}
            - \frac{\psi_{\lambda}}{r^2}.
        \end{equation}

    Therefore, the function $\beta$ is also differentiable with respect to $\lambda$ and we can compute its derivative, finding 
        \begin{equation} \label{eq:mono-1}
            \beta'(\lambda) = \int_0^{+\infty} v_{\lambda} q_2 r^{n-1} \, d r 
            =
            \frac{a}{\lambda} \int_0^{+\infty} v_{\lambda} \mathcal{L}_{\lambda} \psi_{\lambda} r^{n-1} \, d r
            =
            \frac{a}{\lambda} \int_0^{+\infty} \psi_{\lambda} \mathcal{L}_{\lambda} v_{\lambda} r^{n-1} \, d r,
        \end{equation}
    where to deduce the last identity we used that $\mathcal{L}_{\lambda}$ is self-adjoint in $L^2(r^{n-1} dr)$.
        
    From~\eqref{eq:v-lambda} we derive
        \begin{align} \notag
            \int_0^{+\infty} \psi_{\lambda} \mathcal{L}_{\lambda} v_{\lambda} r^{n-1} \, d r
            & =
            \frac{1}{\lambda} \int_0^{+\infty} \psi_{\lambda} \mathcal{L}_{\lambda} \psi_{\lambda} r^{n-1} \, dr -
            \int_0^{+\infty} \psi_{\lambda}^2 r^{n-3} \, dr \\[1ex]
            \label{eq:mono-2}
            & =
            \frac{1}{\lambda} \int_0^{+\infty} \! \bigg( ( \psi_{\lambda}' )^2 - \frac{n(n+2)}{(1+r^2)^2} \psi_{\lambda}^2 \bigg) r^{n-1} \, dr.
        \end{align}
    Combining~\eqref{eq:mono-1},~\eqref{eq:mono-2} and Lemma~\ref{lemma:better-constant} we deduce that $\beta'(\lambda) \ge 0$ with equality if and only if $\psi_{\lambda}$ is a multiple of $V$, but this is never the case since $\mathcal{L}_{\lambda} V$ is not a multiple of $q_2$. Therefore, $\beta'(\lambda) > 0$ and~$\beta$ is strictly increasing.

    Finally, using the explicit expression of $\psi_{n-1}$ given in Remark~\ref{remark:psi-n-1} it is easy to check that
        \[
            \beta(n-1) = \frac{n-2}{4} \omega.
        \]
    Moreover, if $\beta_1 = \beta(n-1)$, then by the strict monotonicity of $\beta$ we deduce that $\lambda_1 = n-1$ and by the Lichnerowicz-Obata theorem  this implies that $(Y,h_0)$ is isometric to the $(n-1)$-dimensional sphere endowed with its standard metric.
\end{proof}

\smallskip

We now comment on the condition 
\begin{equation} \label{eq:xi-zero-mean}
    \int_Y \mathrm{tr} \xi \, d\mu_{h_0} = 0,
\end{equation}
which has been used so far in Proposition \ref{prop:second-var-EH} and in 
\eqref{eq:R-parte-conforme}. Via a conformal change of the form 
$g \mapsto \Tilde{g} = \mathfrak{u} g$ with 
$\mathfrak{u} (s, y) = 1 + 2 s f(y)$ it is possible to show, 
re-defining the geodesic distance for $\tilde{g}$, that $\xi$ 
transforms as 
\begin{equation} \label{eq:transf-xi}
\xi \longmapsto \xi + \nabla^2 f + f h_0.
\end{equation}
In this way, taking as $f$ a proper constant on $Y$, we 
can always reduce ourselves to condition \eqref{eq:xi-zero-mean}.

Finally, using Proposition~\ref{prop:monotonicity-beta} we are able to conclude the proof of our claim:

    \begin{proposition} \label{p:neg-def}
        If $n \ge 5$ and if we are under the assumptions of Theorem~\ref{t:main}, then the quadratic form defined by the right-hand side of~\eqref{eq:expansion-5} is negative-definite. 
    \end{proposition}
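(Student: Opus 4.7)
The plan is to substitute the standard decomposition $\xi = f h_0 + \nabla^2 G + \delta^*\alpha + \eta$ from \eqref{eq:decomposition-xi} into each of the three terms on the right-hand side of \eqref{eq:expansion-5} and show that the total splits into three decoupled contributions---one involving only $(f,G)$, one only $\alpha$, one only $\eta$---each non-positive, with at least one strictly negative under $(\xi_P)$.

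For the Einstein-Hilbert term I would use the orthogonality in Lemma~\ref{lemma:decomposition-tensor} (available since $(Y,h_0) \neq (S^{n-1}, g_{S^{n-1}})$): \eqref{eq:R-parte-Lie} kills the $\delta^*\alpha$-piece, \eqref{eq:R-parte-tt} combined with strict stability makes the $\eta$-piece non-positive and zero iff $\eta \equiv 0$, and \eqref{eq:R-parte-conforme} handles the conformal piece explicitly. For the $\Psi$-term I would invoke the orthogonal splitting \eqref{eq:decomp-ortho-Psi}: by \eqref{eq:contributo-Psi1} and \eqref{eq:coefficiente-q1}, $\int \Psi_1 \widetilde{\lp}\Psi_1 \,d\mu_{g_0} = \frac{(n-2)(n-4)}{4}\omega\int_Y(|\nabla f|^2 - (n-1)f^2)\, d\mu_{h_0}$, and by \eqref{eq:key-estimate-Psi-2}, $\int \Psi_2 \widetilde{\lp}\Psi_2 \,d\mu_{g_0} = \sum_{k \geq 1}\lambda_k c_k^2 \beta_k$, where $\{c_k\}$ are the Fourier coefficients of $f - G$ on the eigenbasis $\{u_k\}$ of $-\Delta_{h_0}$. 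For the algebraic term I would expand $|\xi|^2$ directly: the pointwise cross terms $\langle fh_0, \delta^*\alpha\rangle$, $\langle fh_0, \eta\rangle$ and $\langle \delta^*\alpha, \eta\rangle$ vanish (using $\mathrm{tr}\,\delta^*\alpha = -\delta\alpha = 0$ and $\mathrm{tr}\,\eta = 0$), while $\int_Y\langle \nabla^2 G, \delta^*\alpha\rangle \,d\mu_{h_0}$ and $\int_Y\langle\nabla^2 G, \eta\rangle \,d\mu_{h_0}$ vanish after integration by parts thanks to Proposition~\ref{prop:div-hess}, $\delta\alpha = 0$ and $\delta\eta = 0$. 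Bochner's identity on the Einstein manifold $(Y,h_0)$ then yields $\int_Y[(\Delta G)^2 - |\nabla^2 G|^2]\,d\mu_{h_0} = (n-2)\int_Y|\nabla G|^2 \,d\mu_{h_0}$.

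Collecting everything, the $\eta$-contribution $\frac{\omega}{2}\mathrm{Vol}_{h_0}(Y)^{\frac{n-3}{n-1}}\mathcal{R}''(h_0)[\eta,\eta] - \frac{\omega}{4}\int_Y|\eta|^2 \,d\mu_{h_0}$ is strictly negative when $\eta \neq 0$ by strict stability; the $\alpha$-contribution reduces to $-\frac{\omega}{4}\int_Y|\delta^*\alpha|^2\, d\mu_{h_0}$, strictly negative when $\delta^*\alpha \not\equiv 0$; and the $f$-only terms from $\Psi_1$ and from the conformal piece of $\mathcal{R}''(h_0)$ exactly cancel the $(n-1)f^2$-piece of the algebraic term, leaving the $(f,G)$-contribution
\[
\frac{\omega(n-2)}{4}\int_Y |\nabla(f-G)|^2\,d\mu_{h_0} - \sum_{k \geq 1}\lambda_k c_k^2 \beta_k \;=\; \sum_{k\geq 1}\lambda_k c_k^2\left(\frac{\omega(n-2)}{4} - \beta_k\right).
\]

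The main obstacle is controlling this last sum, and this is precisely what Proposition~\ref{prop:monotonicity-beta} is designed for: since $(Y,h_0) \neq (S^{n-1}, g_{S^{n-1}})$, one has $\beta_1 > \frac{(n-2)\omega}{4}$ strictly, together with the monotonicity of $\{\beta_k\}$ and the Lichnerowicz-Obata inequality $\lambda_k \geq \lambda_1 > n-1$. Hence the $(f,G)$-contribution is $\leq 0$, with equality iff all $c_k$ vanish, i.e.\ $f = G$. Finally, condition $(\xi_P)$ precisely forbids $\xi = \nabla^2\phi + \phi h_0$ for any $\phi$, that is, it excludes the simultaneous occurrence of $f = G$, $\delta^*\alpha \equiv 0$ and $\eta \equiv 0$. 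Consequently at least one of the three contributions is strictly negative, yielding the claimed negative-definiteness.
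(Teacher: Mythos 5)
Your proposal is correct and follows essentially the same route as the paper: the same decomposition \eqref{eq:decomposition-xi}, the orthogonality of $\mathcal{R}''$ from Lemma~\ref{lemma:decomposition-tensor} together with \eqref{eq:R-parte-tt}--\eqref{eq:R-parte-Lie}, the splitting \eqref{eq:decomp-ortho-Psi} with the explicit computation \eqref{eq:contributo-Psi1}--\eqref{eq:coefficiente-q1} for $\Psi_1$, Bochner's identity for the algebraic term, and the Fourier/monotonicity estimate of $\int_{\mathcal{C}(Y)}\Psi_2\widetilde{\lp}\Psi_2\,d\mu_{g_0}$ via \eqref{eq:key-estimate-Psi-2} and Proposition~\ref{prop:monotonicity-beta}. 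Your explicit final case analysis, using $(\xi_P)$ to exclude the simultaneous vanishing of $f-G$, $\delta^*\alpha$ and $\eta$, is just a slightly more careful rendering of the paper's ``with equality if and only if'' bookkeeping around \eqref{eq:simplify-quadratic-1}, not a different argument.
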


    \begin{proof}
Taking into account the decomposition given in~\eqref{eq:decomposition-xi}, we can rewrite the third addend in~\eqref{eq:expansion-5} as
\begin{align}
    \notag
    \int_Y \Big( (\mathrm{tr} \xi )^2 - \abs{\xi}^2 \Big) \, d \mu_{h_0}  = & \int_Y \bigg( (\Delta G)^2 + (n-1)(n-2) f^2 - 2(n-2) \nabla G \cdot \nabla f \bigg) \, d \mu_{h_0} \\[1ex]
    \notag
    & - \int_Y \bigg( \abs{\nabla^2 G}^2 + \abs{\delta^* \alpha}^2 + \abs{\eta}^2 \bigg) \, d \mu_{h_0} \\[1ex]
    \notag
    = & (n-2) \int_Y \bigg( \abs{\nabla (f - G)}^2 - \abs{\nabla f}^2 + (n-1) f^2 \bigg) \, d \mu_{h_0} \\[1ex]
    \label{eq:third-summand-dec}
     & -  \int_Y \bigg( \abs{ \delta^* \alpha }^2 + \abs{\eta}^2 \bigg) \, d \mu_{h_0},
\end{align}
where to derive the last equality we used  Bochner's identity and the elementary formula $\abs{\nabla (G - f)}^2 = \abs{\nabla G}^2 + \abs{\nabla f}^2 - 2 \nabla G \cdot \nabla f$.

        Next, combining~\eqref{eq:contributo-Psi1},~\eqref{eq:coefficiente-q1}, 
        the fact that $\mathcal{R}''$ is orthogonal with respect to the decomposition 
        in Lemma~\ref{lemma:decomposition-tensor},~\eqref{eq:R-parte-conforme} and~\eqref{eq:third-summand-dec} we deduce
        \begin{multline} \label{eq:simplify-quadratic-1}
                - \int_{\mathcal{C}(Y)} \Psi_1 \widetilde{\lp} \Psi_1 \, d\mu_{g_0}
                + \frac{\omega}{2} \mathrm{Vol}_{h_0}(Y)^{\frac{n-3}{n-1}} \mathcal{R}''(h_0)[\xi,\xi]
                + \frac{\omega}{4} \int_Y \Big( (\mathrm{tr} \xi )^2 - \abs{\xi}^2 \Big) \, d \mu_{h_0} \\[1ex]
                \le
                \frac{n-2}{4} \omega \int_Y \abs{\nabla (f-G)}^2 \, d \mu_{h_0},
        \end{multline}
with equality if and only if we have both $\delta^* \alpha = 0$ and $\eta = 0$.

        Therefore, if we prove that
        \begin{equation*}
            \label{eq:key-estimate-Psi}
            \int_{\mathcal{C}(Y)} \Psi_2 \widetilde{\lp} \Psi_2 \, d\mu_{g_0} > \frac{n-2}{4} \omega \int_Y \abs{\nabla (f - G)}^2 \, d \mu_{h_0},
        \end{equation*} 
        then the conclusion will follow thanks to identity~\eqref{eq:decomp-ortho-Psi}.

        For the last inequality we notice that, from~\eqref{eq:key-estimate-Psi-2} and Proposition~\ref{prop:monotonicity-beta} one has 
        \begin{equation*}
            \int_{\mathcal{C}(Y)} \Psi_2 \widetilde{\lp} \Psi_2 \, d\mu_{g_0} \ge \beta_1 \sum_{k = 1}^{+\infty} \lambda_k c_k^2 = \beta_1 \int_Y \abs{\nabla (f - G)}^2 \, d \mu_{h_0} > \frac{n-2}{4} \omega \int_Y \abs{\nabla (f - G)}^2 \, d \mu_{h_0},
        \end{equation*}
        where we used~\eqref{eq:beta-1-ineq} and the fact that $(Y,h_0)$ is not isometric to the standard sphere.
        The proof is therefore concluded. 
    \end{proof}

\section{Expansion of the Yamabe quotient} \label{s:exp-II}

In this section, using the estimates of the previous one, we will expand 
the Yamabe quotient on suitable test functions, in order to prove inequality 
\eqref{eq:sing-quot} starting with $n \geq 5$. The case $n = 4$ will be treated 
in a separate subsection, for technical reasons.


We start by considering the metric $g_\eps$ on the cone $\mathcal{C}(Y)$, as in the beginning of Section~\ref{s:exp-I}. This metric is a perturbation in $\eps$ of the pure conical metric $g_0$ near the conical point. We will write, for the conformal Laplacian and the volume form with respect to this metric:
\begin{equation}
    \label{eq:expansion-1}
    \lp_{g_\eps} = \lp_0 + \eps \lp_1 + \eps^2 \lp_2 + o(\eps^2), \quad \mu_{g_\eps} = \big(1+\eps m_1 + \eps^2 m_2+o(\eps^2)\big) \mu_{g_0}.
\end{equation}
The operators $\lp_i$ have an explicit form, which we will 
 only need for radial functions, For general functions, we will limit ourselves to display some 
 characterizations of the operators in terms of their coefficients. 
We collect some useful properties in the next lemma, together with rigorous estimates 
on the error terms. These estimates will be crucial to expand the numerator and the denominator in the Yamabe quotient of $(M,\bar{g})$ on suitable test functions in powers of $\eps$.

\begin{lemma}\label{l:errori-precisi}
Let $h(s)$ be a family of metrics as in Proposition~\ref{prop:espansione-operatori-2} and let $h_0=h(0)$,~$\xi=h'(0)$ and $\eta=h''(0)$. Let $g_\eps$ be the metric on $\mathcal{C}(Y)$ defined as $dr^2 + r^2 h(\eps r)$. Then, for any $\delta > 0$ there exists a constant $C=C(n,h,\delta)$ such that in the set $\{ r \leq 2 \delta / \eps \}$ the following expansions hold 
    \begin{equation}\label{eq:exp-mu}
     \mu_{g_\eps} = \Big( 1+ \eps m_1 + \eps^2 m_2 + \eps^3 m_3 \Big) r^{n-1} \mu_{h_0}, 
    \end{equation}
    where $m_1, m_2$ are explicit and given by 
\begin{equation} \label{eq:m1}
    m_1(r,y) = \frac{r}{2} \mathrm{tr}_{h_0} \xi,
\end{equation}
\begin{equation} \label{eq:m2}
    m_2(r,y) = \frac{r^2}{8} \Big( (\mathrm{tr}_{h_0} \xi)^2 - 2 \abs{\xi}^2_{h_0} + 2 \mathrm{tr}_{h_0} \eta  \Big),
\end{equation}
and where $m_3$ satisfies
    \begin{equation} \label{eq:m3}
        \abs{m_3(r,y)} \le C r^3.
    \end{equation}

Moreover, for every $u \in C^2 \big(\mathcal{C}(Y) \setminus \{ P \} \big)$ we have 
    \begin{equation}\label{eq:exp-Lapl}
     \lp_{g_\eps} u = \lp_0 u + \eps \lp_1 u + \eps^2 \lp_2 u 
    + \eps^3 \lp_3 u. 
    \end{equation}
    Here, the operators $\lp_i$ satisfy the following properties 
\begin{gather}
    \label{eq:L0}
    \lp_0 = -a \Delta_{g_0} = -a \Big( \partial_{rr} + \frac{(n-1)}{r} \partial_r + \frac{\Delta_{h_0}}{r^2} \Big), \\[1.5ex]
    \label{eq:L1-auto}
    \lp_1 + m_1 \lp_0 \ \text{is formally self-adjoint in $L^2(\mathcal{C}(Y),d\mu_{g_0})$}, \\[1.5ex]
   \label{eq:L3}
    \abs{\lp_i u} \le C r^{i-2} \Big( \abs{u} + r \abs{\nabla_{g_0} u}_{g_0}
     +  r^2 \abs{\nabla_{g_0}^2 u}_{g_0} \Big), \quad \text{for $i=1,2,3$}.
\end{gather}
Furthermore, we also have the following identities
\begin{gather}
    \label{eq:L1}
    \lp_1 U = -\frac{a}{2} \big( \mathrm{tr} _{h_0} \xi \big) U' - \Big( \Delta_{h_0} \mathrm{tr}_{h_0} \xi - \delta^2_{h_0} \xi + 2(n-1) \mathrm{tr}_{h_0} \xi \Big) \frac{U}{r}, \\[1.5ex]
    \label{eq:L2U}
    \lp_2 U = -a \Delta_2 U + \widehat{R}_2(\xi, \eta) U,
\end{gather}
where 
\begin{gather*}
    \Delta_2 U = \frac{1}{2} \Big( \mathrm{tr}_{h_0} \eta - \abs{\xi}_{h_0}^2 \Big) r U', \\[1.5ex]
    \widehat{R}_2(\xi, \eta) = \frac{R_2(\xi, \eta)}{2} + \bigg( \! n + \frac{3}{4} \bigg) \abs{\xi}_{h_0}^2 - \frac{1}{4} \big( \mathrm{tr}_{h_0} \xi \big)^2 - (n+1) \mathrm{tr}_{h_0} \eta,
\end{gather*}
and where $R_2(\xi, \eta)$ is defined as the right-hand side of the identity~\eqref{eq:der-scalar-2}.
\end{lemma}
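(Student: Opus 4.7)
The plan is to view $g_\eps$ on the cone $\mathcal{C}(Y)$ as the conical metric with link family $\tilde h(r):=h(\eps r)$, whose $r$-derivatives satisfy $\tilde h'(r)=\eps\,h'(\eps r)$ and $\tilde h''(r)=\eps^2 h''(\eps r)$. Then Proposition~\ref{prop:espansione-operatori-1} applied to $\tilde h$ gives closed-form expressions for $\mu_{g_\eps}$, $\Delta_{g_\eps}$ and $R_{g_\eps}$ as polynomial combinations of $\tilde h$, $\tilde h'$, $\tilde h''$, $\Delta_{h(\eps r)}$ and $R_{h(\eps r)}$. All the required expansions then reduce to Taylor-expanding $s\mapsto h(s)$ to third order around $s=0$ and substituting $s=\eps r$; the hypothesis that $h$ is of class $C^3$ with values in the $C^2$-metrics provides uniform bounds on all remainders in the regime $\{\eps r\le 2\delta\}$.

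For the volume form, I would iterate \eqref{eq:measure-2} using \eqref{eq:measure-3} to compute the first two $s$-derivatives of $s\mapsto \mu_{h(s)}/\mu_{h_0}$ at $s=0$, obtaining $\tfrac12\mathrm{tr}_{h_0}\xi$ and $\tfrac14(\mathrm{tr}_{h_0}\xi)^2+\tfrac12(\mathrm{tr}_{h_0}\eta-|\xi|_{h_0}^2)$; multiplying by $r^{n-1}$ after substitution yields \eqref{eq:m1}--\eqref{eq:m2}, and Taylor's theorem applied to the $C^3$ map $s\mapsto\sqrt{\det h(s)}$ produces the remainder bound $|m_3|\le Cr^3$. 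For the conformal Laplacian, the Einstein condition $R_{h_0}=(n-1)(n-2)$ makes the $\eps^0$ part of $R_{g_\eps}$ vanish, so $\lp_0=-a\Delta_{g_0}$, establishing \eqref{eq:L0}. The formulas \eqref{eq:L1} and \eqref{eq:L2U} for radial $U$ follow by direct term-by-term extraction: the angular piece $\tfrac{1}{r^2}\Delta_{h(\eps r)}U$ drops out, the cross term $\tfrac\eps2\mathrm{tr}_{\tilde h}\tilde h'\partial_r$ gives the $U'$ contributions, and the scalar curvature expansion, via \eqref{eq:scal-derivata-prima} at first order and \eqref{eq:der-scalar-2} at second order, combined with the $-n\eps r^{-1}\mathrm{tr}_{\tilde h}\tilde h'$ term from \eqref{eq:curv-scal-1}, produces the $U/r$ factor $-(\Delta\mathrm{tr}\xi-\delta^2\xi+2(n-1)\mathrm{tr}\xi)$ in \eqref{eq:L1}, and the more elaborate $\widehat R_2(\xi,\eta)$ in \eqref{eq:L2U} by the same bookkeeping at order $\eps^2$.

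The self-adjointness claim \eqref{eq:L1-auto} I would not check by hand but deduce formally from the self-adjointness of $\lp_{g_\eps}$ in $L^2(d\mu_{g_\eps})$: inserting $d\mu_{g_\eps}=(1+\eps m_1+O(\eps^2))d\mu_{g_0}$ and $\lp_{g_\eps}=\lp_0+\eps\lp_1+O(\eps^2)$ into $\int u\lp_{g_\eps}v\,d\mu_{g_\eps}=\int v\lp_{g_\eps}u\,d\mu_{g_\eps}$ and equating the $\eps^1$-coefficients gives exactly $\int u(\lp_1+m_1\lp_0)v\,d\mu_{g_0}=\int v(\lp_1+m_1\lp_0)u\,d\mu_{g_0}$. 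The pointwise bound \eqref{eq:L3} is essentially bookkeeping on the explicit form of each $\lp_i$: at $\eps^i$-level the scalar curvature contributes a coefficient of size $r^{i-2}$ multiplying $u$, the cross term contributes $r^{i-1}$ multiplying $\partial_r u$, and the expansion of $r^{-2}\Delta_{h(\eps r)}$ contributes operators of size $r^{i-2}$ acting on up to two angular derivatives, all controlled by $r^{i-2}(|u|+r|\nabla_{g_0}u|+r^2|\nabla_{g_0}^2u|)$ since $r^2|\nabla_{g_0}^2 u|_{g_0}$ dominates the angular Hessian on $\mathcal{C}(Y)$.

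The one technical point requiring care — and what I would regard as the main obstacle — is the expansion of the angular Laplacian $\Delta_{h(\eps r)}$ and of $R_{h(\eps r)}$ in $\eps r$ uniformly in $y\in Y$ up to third order: this is where the $C^3$-in-$C^2$ regularity of $h$ is essential, because in local coordinates the Christoffel symbols of $h(\eps r)$ involve first derivatives of $h$, and the Ricci/scalar curvature involves second derivatives, so one needs the pointwise Taylor expansion of each coefficient up to order three together with uniform control of the order-three remainder. Once this uniform control is in place, all the remaining identities are algebraic manipulations on the already-derived formulas of Propositions~\ref{prop:espansione-operatori-2} and~\ref{prop:espansione-operatori-1}.
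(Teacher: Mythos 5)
Your proposal is correct and follows essentially the same route as the paper: apply Proposition~\ref{prop:espansione-operatori-1} to the rescaled link family $h(\eps r)$, Taylor-expand $\mu_{h(s)}$, $\Delta_{h(s)}$ and $R_{h(s)}$ in $s$ to third order using the $C^3$ regularity for uniform remainder bounds on $\{\eps r\le 2\delta\}$, control the angular derivatives by the cone derivatives, and deduce \eqref{eq:L1-auto} by equating $\eps$-coefficients in the self-adjointness identity for $\lp_{g_\eps}$ in $L^2(d\mu_{g_\eps})$. The technical point you flag as the main obstacle (uniform third-order control of $\Delta_{h(\eps r)}$ and $R_{h(\eps r)}$) is exactly what the paper handles via its estimates \eqref{eq:resto-Delta}, \eqref{eq:stima-hessiano-s} and \eqref{eq:resto-scal}.
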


\begin{proof}
    We start by proving  properties~\eqref{eq:exp-mu}-\eqref{eq:m3}. The compactness of $Y$ ensures that for every $s \ge 0$ there exists a positive function $\kappa_s$ on $Y$ such that $\mu_{h(s)} = \kappa_s \mu_{h_0}$. For any $s \le 2 \delta$, by the regularity of $h(s)$ with respect to $s$, we have
        \begin{equation} \label{eq:resto-m3}
            \abs*{ \kappa_s - 1 - c_1 s - \frac{c_2}{2} s^2 } \le \frac{s^3}{6} \max_{s \in [0,2\delta]} \, \abs*{ \frac{d^3 \kappa_s}{ds^3}(s)},
        \end{equation}
    where $c_1$ and $c_2$ are easily obtain from~\eqref{eq:measure-2} and~\eqref{eq:measure-3} and are given by
        \begin{equation*} 
            c_1 = \frac{1}{2} \mathrm{tr}_{h_0} \xi , \qquad
            c_2 = \frac{1}{4} \Big( ( \mathrm{tr}_{h_0} \xi )^2 
            - 2\abs{\xi}_{h_0}^2 + 2\mathrm{tr}_{h_0} \eta \Big).
        \end{equation*}
    The conclusion follows from~\eqref{eq:resto-m3} after combining the change of variable $s = \eps r$ with the identity~\eqref{eq:measure-1}.

    In a similar way as before, for any $u \in C^2(Y)$ and for any $s \le 2 \delta$ we have 
        \begin{equation} \label{eq:resto-Delta}
            \abs*{ \Delta_{h(s)} u - \Delta_{h_0} u - D_1(u) s - \frac{D_2(u)}{2} s^2 } \le \frac{s^3}{6} \max_{s \in [0,2\delta]} \, \abs*{ \frac{d^3 \Delta_{h} u}{ds^3}(s)},
        \end{equation}
    where
        \[
            D_1(u) = \frac{d \Delta_{h}u}{ds} |_{s=0}, \qquad D_2(u) = \frac{d^2 \Delta_{h}u}{ds^2} |_{s=0}.
        \]
    
    Moreover, the fact that $h(s)$ is $C^3$ with respect to $s$ clearly implies that there exist positive constants $C_1$ and $C_2$ depending only on the dimension $n$, on $\delta$ and on the restriction to $[0,2\delta]$ of $h$ and its derivatives such that for any $i=1,2,3$ it holds
    \begin{equation} \label{eq:stima-hessiano-s}
    \max_{s \in [0,2\delta]} \, \abs*{ \frac{d^i \Delta_{h} u}{ds^i}(s)} \le C_1 \max_{s \in [0,2\delta]} \Big( \abs{\nabla_{h(s)}^2 u}_{h(s)} + \abs{\nabla_{h(s)} u}_{h(s)} \Big) \le C_1 C_2 \Big( \abs{\nabla_{h_0}^2 u}_{h_0} + \abs{\nabla_{h_0} u}_{h_0} \Big),
    \end{equation}
    where in last inequality we used the fact that the difference of two connections is always a tensor and that on a compact manifold two Riemannian metrics are always equivalent. Using the change of variable $s = \eps r$ with the estimates~\eqref{eq:resto-Delta} and~\eqref{eq:stima-hessiano-s} we obtain
    \begin{equation} \label{eq:est-lap-ang}
        \abs*{ \Delta_{h(\eps r)} u - \Delta_{h_0} u - D_1(u) \eps r - \frac{D_2(u)}{2} \eps^2 r^2 } \le \frac{C_1 C_2}{6} \eps^3 r^3 \Big( \abs{\nabla_{h_0}^2 u}_{h_0} + \abs{\nabla_{h_0} u}_{h_0} \Big),
    \end{equation}
    which is valid for $r \le 2 \delta / \eps$. 

    We consider now a generic function $u \in C^2\big( \mathcal{C}(Y) \setminus \{ P\} \big)$, and from~\eqref{eq:Laplacian-1}, \eqref{eq:measure-3} and~\eqref{eq:est-lap-ang} we deduce that
    \begin{equation}
        \label{eq:eps-Delta}
        \Delta_{g_\eps}u = \Delta_{g_0}u + \eps \Delta_1 u + \eps^2 \Delta_2 u + \eps^3 \Delta_3 u,
    \end{equation}
    where
    \begin{equation}
        \label{eq:Delta-1-2}
        \Delta_1 u = \frac{1}{2} ( \mathrm{tr}_{h_0} \xi ) \partial_r u + \frac{D_1(u(r,\cdot))}{r}, \qquad 
        \Delta_2 u 
        =
        \frac{1}{2} \Big( \big(  \mathrm{tr}_{h_0} \eta -\abs{\xi}_{h_0}^2 \big) r \partial_r u + D_2(u(r, \cdot)) \Big), 
    \end{equation}
    and
    \begin{equation} \label{eq:resto-Delta-2}
        \abs{\Delta_3 u} \le C r \Big( r \abs{\nabla_{g_0} u}_{g_0}
     +  r^2 \abs{\nabla_{g_0}^2 u}_{g_0} \Big), 
    \end{equation}
    where to derive the previous estimate we used the following elementary inequalities:
    \begin{equation} \label{eq:link-to-conic}
        \abs{\partial_r u} \le \abs{\nabla_{g_0} u}_{g_0}, \qquad
        \abs{\nabla_{h_0} u(r, \cdot)}_{h_0} \le r \abs{\nabla_{g_0} u}_{g_0},
        \qquad
        \abs{\nabla_{h_0}^2 u(r, \cdot)}_{h_0} \le r^2 \abs{\nabla_{g_0}^2 u}_{g_0}.
    \end{equation}

    Similarly, for the scalar curvature and for any $s \le 2 \delta$ we have
    \begin{equation} \label{eq:resto-scal}
            \abs*{ R_{h(s)} - (n-1)(n-2) - R_1 s - \frac{R_2}{2} s^2 } \le \frac{s^3}{6} \max_{s \in [0,2\delta]} \, \abs*{ \frac{d^3 R_h}{ds^3}(s)},
        \end{equation}
    where $R_1=R_1(\xi)$ and $R_2=R_2(\xi, \eta)$ coincide respectively with the right-hand side of~\eqref{eq:scal-derivata-prima} evaluated at $s=0$ and with the right-hand side of~\eqref{eq:der-scalar-2}. From the estimate~\eqref{eq:resto-scal} and the formula~\eqref{eq:curv-scal-1} it follows that
    \begin{equation}
        \label{eq:eps-Scal}
        R_{g_\eps} = \eps \widehat{R}_1 + \eps^2 \widehat{R}_2 + \eps^3 \widehat{R}_3,
    \end{equation}
    where
    \begin{equation}
        \label{eq:hat-scal}
        \widehat{R}_1 = \frac{R_1 - n \mathrm{tr}_{h_0} \xi}{r}, \qquad
    \widehat{R}_2 =
    \frac{R_2}{2} +
    \bigg( \! n + \frac{3}{4} \bigg) \abs{\xi}_{h_0}^2 - \frac{1}{4} \big( \mathrm{tr}_{h_0} \xi \big)^2 - (n+1) \mathrm{tr}_{h_0} \eta,
    \end{equation}
    and with $\abs{\widehat{R}_3} \le C r$, for every $r \le 2 \delta / \eps$.

    At this point, formulas~\eqref{eq:exp-Lapl} and~\eqref{eq:L0} are an easy consequence of~\eqref{eq:eps-Delta} and~\eqref{eq:eps-Scal}. 
    Combining~\eqref{eq:stima-hessiano-s},~\eqref{eq:Delta-1-2},~\eqref{eq:link-to-conic} and~\eqref{eq:hat-scal} we deduce the estimates in~\eqref{eq:L3} for $i=1,2$, similarly the case $i=3$ follows from~\eqref{eq:resto-Delta-2} and the bound on $\widehat{R}_3$. Using the identities~\eqref{eq:Delta-1-2}, \eqref{eq:hat-scal} and the fact that $D_1 = D_2 = 0$ for radial functions we obtain the formulas in~\eqref{eq:L1} and~\eqref{eq:L2U}.

    We are left with the proof of  property~\eqref{eq:L1-auto}. Let $u$ and $v$ be smooth functions with compact support on $\mathcal{C}(Y) \setminus \{P \}$: then
        \[
        \int_{\mathcal{C}(Y)} u \lp_{g_\eps} v \, d \mu_{g_\eps} =
        \int_{\mathcal{C}(Y)} v \lp_{g_\eps} u \, d \mu_{g_\eps},
        \]
    as a consequence of the fact that $\lp_{g_\eps}$ is self-adjoint in $L^2(\mathcal{C}(Y), d \mu_{g_\eps})$, for every $\eps \ge 0$. On the other hand, from~\eqref{eq:exp-mu} and~\eqref{eq:exp-Lapl} we deduce that
        \[
        \int_{\mathcal{C}(Y)} u \lp_{g_\eps} v \, d \mu_{g_\eps} =
        \int_{\mathcal{C}(Y)} u \lp_0 v \, d \mu_{g_0}
        +
        \eps 
        \int_{\mathcal{C}(Y)} u \big( \lp_1 + m_1 \lp_0 \big) v \, d \mu_{g_0} + o(\eps),
        \]
    and the same is true exchanging $u$ and $v$, the conclusion follows.
\end{proof}

\smallskip

We now pass to the expansion of the Yamabe quotient on the conical manifold $(M,\bar{g})$, starting with 
the denominator in \eqref{eq:Qg}. To this aim, we introduce a cut-off function $\chi_\delta(s,y)$ defined on the cone $(\mathcal{C}(Y), g)$ and depending only on the $g$-distance from its vertex, which in particular coincides with $s$. This function will allow us to define global test functions on the singular manifold $M$ starting from the functions $u_\eps$ defined by~\eqref{eq:def-test-functions}. More precisely, we choose $\chi_{\delta}(s)$ with the following properties
\begin{equation}\label{eq:cut-off}
    \begin{cases}
    \displaystyle
        \chi_{\delta}(s) = 1 & \text{for $s \le \delta$}, \\[0.5ex]
        \chi_{\delta}(s) = 0 & \text{for $s \ge 2 \delta$}, \\[0.5ex]
        \abs{\nabla_g^k \hspace{0.02cm} \chi_{\delta}}_g (s) \leq C \delta^{-k}, 
        & \text{for $ \delta \le s \le 2\delta$, and $k=1,2$},
    \end{cases}
\end{equation}
for some positive constant $C$.

Let $\mathcal{U}$ be a neighborhood of the conical point $P$ in $M$ with the property that there exists a diffeomorphism $\sigma$ between $\mathcal{U} \setminus \{ P \}$ and $(0,R) \times Y$, for a certain $R > 0$, such that
    \begin{equation} \label{eq:def-sigma}
        \sigma_{*} \bar{g} = g = ds^2 + s^2 h(s),
    \end{equation}
see \eqref{eq:diffeo}.    
Finally, we can define our family of test functions on $M$, for $\delta < R/2$ and $\eps > 0$ we set
    \begin{equation} \label{eq:global-test-functions}
    \widehat{u}_{\eps, \delta}(x) =
        \begin{cases}
        \displaystyle
            \chi_\delta (\sigma(x)) u_\eps (\sigma(x)) & \text{if $x \in \mathcal{U}$} \\[0.5ex]
            0 & \text{otherwise in $M$}.
        \end{cases}
    \end{equation}

In order to find proper cancellations,
we will crucially use the property of $\Psi$ of being a solution of 
\[
\widetilde{\lp} \Psi = - \lp_1 U,
\]
see~\eqref{eq:equation-Psi},~\eqref{eq:FU} and the discussion at the beginning of Subsection~\ref{ss:c-psi}.

\begin{lemma}\label{l:den}
In the above notation, for $n \geq 5$, we have that 
\[
\int_{M }  \abs{\widehat{u}_{\eps, \delta}}^{\frac{2n}{n-2}} \, d\mu_{\bar{g}} = 1 + \eps^2 \int_{\mathcal{C}(Y)} \bigg( \frac{n(n+2)}{(n-2)^2} \Psi^2 U^{\frac{4}{n-2}} + \frac{2n}{n-2} \Psi U^{\frac{n+2}{n-2}} m_1 + U^{\frac{2n}{n-2}} m_2 \bigg) \, d\mu_{g_0} + o(\eps^2),
\]
where $m_1$ and $m_2$ are defined respectively by~\eqref{eq:m1} and by~\eqref{eq:m2}.
\end{lemma}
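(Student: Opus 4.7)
The plan is to change variables, expand in powers of $\eps$, and match coefficients.

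\textbf{Setup.} First, I would pull back the integrand to the cone $\mathcal{C}(Y)$ via the diffeomorphism $\sigma$ introduced in~\eqref{eq:def-sigma}, then apply the dilation $s = \eps r$ as in Section~\ref{s:exp-I}. With $w_\eps = U + \eps \Psi$ and $\widetilde{\chi}_\delta(r) := \chi_\delta(\eps r)$, the computation~\eqref{eq:change.variable-1} applied to the $L^{2n/(n-2)}$-type integral yields
\[
\int_{M} |\widehat{u}_{\eps,\delta}|^{\frac{2n}{n-2}} \, d\mu_{\bar{g}}
=
\int_{\mathcal{C}(Y)} \widetilde{\chi}_\delta^{\frac{2n}{n-2}} w_\eps^{\frac{2n}{n-2}} \, d\mu_{g_\eps},
\]
and the cutoff is supported in $r \le 2\delta/\eps$, precisely the range where Lemma~\ref{l:errori-precisi} applies.

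\textbf{Expansion and term matching.} Next, apply Taylor's theorem to $w_\eps^{2n/(n-2)}$ as a function of $\eps \Psi$ around $U$:
\begin{equation*}
w_\eps^{\frac{2n}{n-2}} = U^{\frac{2n}{n-2}} + \eps \, \frac{2n}{n-2} \, \Psi U^{\frac{n+2}{n-2}} + \eps^2 \, \frac{n(n+2)}{(n-2)^2} \, \Psi^2 U^{\frac{4}{n-2}} + R_\eps.
\end{equation*}
Multiplying by $(1 + \eps m_1 + \eps^2 m_2 + \eps^3 m_3) r^{n-1} d\mu_{h_0}$ and collecting powers of $\eps$, the $\eps^0$-contribution gives $\int_{\mathcal{C}(Y)} U^{\frac{2n}{n-2}} d\mu_{g_0} = 1$ by the normalization in Proposition~\ref{p:Petean}. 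The $\eps^1$-contribution equals $\int_{\mathcal{C}(Y)} \big( \frac{2n}{n-2} \Psi U^{\frac{n+2}{n-2}} + U^{\frac{2n}{n-2}} m_1 \big) \, d\mu_{g_0}$, and both summands vanish: the second because $\int_Y m_1 \, d\mu_{h_0} = \tfrac{r}{2} \int_Y \mathrm{tr}\,\xi \, d\mu_{h_0} = 0$ by the normalization~\eqref{eq:xi-zero-mean}, and the first by the orthogonality~\eqref{eq:ortho-Psi}, which applies because each angular factor appearing in $-\lp_1 U$ as displayed in~\eqref{eq:L1copy} (namely $\mathrm{tr}\,\xi$ and $\Delta_{h_0} \mathrm{tr}\,\xi - \delta^2_{h_0} \xi + 2(n-1) \mathrm{tr}\,\xi$) has zero mean on $(Y,h_0)$. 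The $\eps^2$-contribution is then precisely the displayed expression in the statement.

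\textbf{Error control.} It remains to absorb three sources of error into $o(\eps^2)$: the tail on $\{r \ge \delta/\eps\}$ where $\widetilde{\chi}_\delta \neq 1$; the Taylor remainder $R_\eps$; and the cubic-in-$\eps$ contributions coming from the product of $m_1$ or $m_2$ with higher-order terms in $\Psi$, and from $m_3$. The $m_3$-term is $O(\eps^3)$ via $|m_3| \le Cr^3$ together with the integrability of $r^{n+2} U^{2n/(n-2)}$ for $n \ge 4$, and the tail contributions are controlled by $U \le C r^{-(n-2)}$ and $|\Psi| \le C r^{-(n-3)}$ from Lemma~\ref{l:bd-Psi}, producing an error of order $\eps^n$. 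I expect the hardest step to be the estimate on $R_\eps$: the naive bound $|R_\eps| \le C \eps^3 |\Psi|^3 U^{\frac{6-n}{n-2}}$ only applies where $\eps |\Psi| \le \tfrac{1}{2} U$, a condition that degenerates near $r \sim 1/\eps$ because $|\Psi|/U \lesssim 1+r$ by~\eqref{eq:cont-Psi-est} and Lemma~\ref{l:bd-Psi}. To handle this, I would split the integration at a large but fixed $R > 0$: for $r \le R$ the Taylor bound holds uniformly for $\eps$ small and the resulting integral is $O(\eps^3)$; for $R \le r \le 2\delta/\eps$ I would instead directly bound $w_\eps \le C(1+\eps r) r^{-(n-2)}$ and show that the full integral of $w_\eps^{2n/(n-2)} r^{n-1}$ on this range is $O(\eps^n)$, which also dominates each of the leading-order terms in the claimed expansion restricted to this region. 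The assumption $n \ge 5$ is what ensures convergence of all the integrals appearing in the $\eps^2$-coefficient, and is precisely what keeps these error terms strictly of smaller order.
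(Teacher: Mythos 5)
Your overall architecture (change of variables, Taylor expansion of $w_\eps^{2n/(n-2)}$, vanishing of the $\eps$-coefficient via~\eqref{eq:xi-zero-mean} and~\eqref{eq:ortho-Psi}, and the $O(\eps^n)$ bound on the annulus where the cutoff varies) is the same as the paper's. The gap is in your treatment of the Taylor remainder $R_\eps$. First, the difficulty you flag is not actually present: on the whole region $\{r \le 2\delta/\eps\}$ the bound~\eqref{eq:cont-Psi-est} gives $\eps\abs{\Psi}/U \le C\eps(1+r) \le C(\eps + 2\delta)$, which is uniformly smaller than $1/2$ once $\delta$ is fixed small enough and $\eps \le \eps_0$; the condition $\eps\abs{\Psi}\le U/2$ only degenerates at $r \sim 1/\eps$, which lies outside the support of the cutoff. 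This is exactly how the paper proceeds: the cubic remainder bound $\abs{R_\eps} \le C(\delta)(1+r)\Psi^2 U^{4/(n-2)}\eps^3$ holds on all of $\{r \le \delta/\eps\}$, and since $(1+r)\Psi^2 U^{4/(n-2)} r^{n-1} \sim r^{2-n}$ is integrable at infinity for $n \ge 4$, this term contributes $O(\eps^3)$.

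Second, and more seriously, the workaround you propose does not work. You claim that the full integral of $w_\eps^{2n/(n-2)} r^{n-1}$ over $\{R \le r \le 2\delta/\eps\}$ is $O(\eps^n)$. It is not: since $w_\eps \ge U/2 \ge c\, r^{-(n-2)}$ there, that integral is bounded below by $c\int_R^{2\delta/\eps} r^{-n-1}\,dr$, which tends to $c\,R^{-n}/n$ as $\eps \to 0$ --- a constant independent of $\eps$. The $O(\eps^n)$ bound is correct only on the annulus $\{\delta/\eps \le r \le 2\delta/\eps\}$, whose \emph{inner} radius scales like $1/\eps$. Consequently you cannot discard the region $\{R \le r \le 2\delta/\eps\}$ as an error: it carries an $O(1)$ portion of the leading term $\int_{\mathcal{C}(Y)} U^{2n/(n-2)}\,d\mu_{g_0}=1$ and a nonvanishing portion of the $\eps^2$-coefficient, so the expansion would no longer close. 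The fix is simply to drop the splitting at $R$ and use the uniform Taylor bound described above. A minor additional point: the hypothesis $n \ge 5$ is not what makes the $\eps^2$-coefficient integrals converge (they do for $n=4$ as well, cf.\ Remark~\ref{r:den-4}); the dimension restriction enters elsewhere, e.g.\ through the divergence of $\omega$ in the numerator estimates.
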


\begin{proof}

From the definition of $\widehat{u}_{\eps, \delta}$ given by~\eqref{eq:global-test-functions} it is clear that 
    \begin{equation*} 
        \int_{M}  \abs{\widehat{u}_{\eps, \delta}}^{\frac{2n}{n-2}} \, d\mu_{\bar{g}} 
        =
        \int_{\mathcal{U}}  \abs{\widehat{u}_{\eps, \delta}}^{\frac{2n}{n-2}} \, d\mu_{\bar{g}}
        =
        \int_{\{s \le 2 \delta \}}  \abs{\chi_\delta (s) u_\eps(s,y) }^{\frac{2n}{n-2}} \, d\mu_{g}.
    \end{equation*}
Moreover, from the discussion at the beginning of Section~\ref{s:exp-I} it follows that
    \begin{equation*} 
        \int_{\{s \le 2 \delta \}}  \abs{\chi_\delta (s) u_\eps(s,y) }^{\frac{2n}{n-2}} \, d\mu_{g}
        =
        \int_{\{r \le 2 \delta / \eps \}}  \abs{\chi_\delta (\eps r) w_\eps(r,y) }^{\frac{2n}{n-2}} \, d\mu_{g_{\eps}},
    \end{equation*}
where $w_\eps$ is defined by~\eqref{eq:w-eps}.

We will first show that the estimate holds for the right-hand side of the previous identity in the region $\{ r \le \delta / \eps \}$, where the cut-off function $\chi_\delta(\eps r)$ is equal to one, and then we prove that the contribution from the region $\{ \delta \le s \le 2\delta\}$ is negligible.

We focus our attention on the set $\{r \leq \delta/\eps\}$. From~\eqref{eq:cont-Psi-est} there exists $C > 0$ such that
    \[
        \eps \abs{\Psi} \leq C (\eps + \delta) U, \ \text{in $\{r \le \delta/ \eps \}$.} 
    \]
This shows that $w_\eps = U + \eps \Psi > 0$ in this region 
if $\delta$ is taken sufficiently small. Using again~\eqref{eq:cont-Psi-est} together with the elementary formula 
 $(1+t)^{\alpha} = 1 + \alpha t + \alpha(\alpha - 1) t^2/2 + O(t^3)$ for $t \to 0$, where $\alpha = 2n/(n-2)$, we deduce that there exist positive constants $\eps_0$ and $C=C(\delta)$ such that for any $\eps \le \eps_0$, for any $r \le \delta / \eps$ and for any $y \in Y$ it holds
\begin{equation*} 
    \abs*{w_\eps^{\frac{2n}{n-2}} - U^{\frac{2n}{n-2}} - \frac{2n}{n-2} \eps \Psi U^{\frac{n+2}{n-2}} - \frac{n(n+2)}{(n-2)^2} \eps^2 \Psi^2 U^{\frac{4}{n-2}}} \le C (1+r) \Psi^2 U^{\frac{4}{n-2}} \eps^3.
\end{equation*} 
Integrating this quantity and recalling that $U$ has $L^{\frac{2n}{n-2}}$-norm  equal to one, we deduce that
\begin{align*}
    \notag
    \int_{\{r \leq \delta/\eps\}} w_{\eps}^{\frac{2n}{n-2}} \, d\mu_{g_\eps} = & 1 
    + \eps \int_{\mathcal{C}(Y)} \bigg( \frac{2n}{n-2} \Psi U^{\frac{n+2}{n-2}} + U^{\frac{2n}{n-2}} m_1 \bigg) \, d\mu_{g_0} \\[1.5ex]
    & + \eps^2 \int_{\mathcal{C}(Y)} \bigg( \frac{n(n+2)}{(n-2)^2} \Psi^2 U^{\frac{4}{n-2}} + \frac{2n}{n-2} \Psi U^{\frac{n+2}{n-2}} m_1 + U^{\frac{2n}{n-2}} m_2 \bigg) \, d\mu_{g_0} + o(\eps^2), 
\end{align*}
where we used~\eqref{eq:exp-mu} and the fact that the terms integrated over $\mathcal{C}(Y)$ are of class $L^1$. 
We next notice that the coefficient of $\eps$ in the above formula vanishes 
because $U$ is radial, $m_1$ has zero angular average and because of~\eqref{eq:ortho-Psi}. 

For $\delta$ small (but fixed), similarly to before we have that $\eps \abs{\Psi} \leq C(\eps + \delta) U$ in $\{ \delta /\eps \le r \le 2 \delta / \eps \}$, 
which by~\eqref{eq:exp-mu}-\eqref{eq:m2} implies
\[
    \int_{\{ \delta / \eps \le r \le 2 \delta / \eps \}}  \abs{\chi_\delta (\eps r) w_\eps(r,y) }^{\frac{2n}{n-2}} \, d\mu_{g_{\eps}}
    \le
    C 
    \int_{\delta/\eps}^{2 \delta/\eps} U(r)^{\frac{2n}{n-2}} r^{n-1} \, dr
    \le C \delta^{-n} \eps^n.
\]
This concludes the proof.
\end{proof}

\begin{remark}\label{r:den-4}
    The estimates in the previous lemma also work when $n = 4$: however in this    dimension we will use a different choice of $\Psi$, so it will be convenient to prove their counterpart separately. 
\end{remark}

\smallskip

We now turn to the estimate of the numerator in \eqref{eq:Qg}.

\begin{lemma}\label{l:num}
     In the above notation, for $n \geq 5$, we have that  
    \begin{align*}
    \notag
    \int_{M} \Big( a \abs{\nabla_{\bar{g}} \hspace{0.02cm} \widehat{u}_{\eps,\delta}}^2_{\bar{g}} + R_{\bar{g}} \hspace{0.02cm} \widehat{u}_{\eps,\delta}^2 \Big) \, d\mu_{\bar{g}} = \mathcal{Y}_P 
    & + \eps^2 \bigg( \int_{\mathcal{C}(Y)} \Psi \lp_0 \Psi \, d\mu_{g_0} + 2 \int_{\mathcal{C}(Y)} \Psi (\lp_1 + m_1 \lp_0) U \, d\mu_{g_0} \bigg) \\[1.5ex]
    & + \eps^2 \bigg( \int_{\mathcal{C}(Y)} U (\lp_2 + m_1 \lp_1 + m_2 \lp_0) U \, d\mu_{g_0} \bigg) + o(\eps^2),
\end{align*} 
where the operators $\lp_i$ and the functions $m_i$ are defined as in Lemma~\ref{l:errori-precisi}.
\end{lemma}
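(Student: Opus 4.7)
\medskip

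\noindent\textbf{Plan of proof.} The plan is to transport the integral to the rescaled cone $(\mathcal{C}(Y),g_\eps)$ via the diffeomorphism $\sigma$ and the change of variables $F_\eps(r,y)=(\eps r,y)$, then expand it in powers of $\eps$ using Lemma~\ref{l:errori-precisi}. Arguing exactly as in the derivation of~\eqref{eq:change.variable-1}, and noting that $\chi_\delta(\sigma(x))$ pulls back to $\widetilde{\chi}_\delta(r):=\chi_\delta(\eps r)$, one obtains
\[
\int_M \widehat{u}_{\eps,\delta}\, \lp_{\bar g} \widehat{u}_{\eps,\delta}\, d\mu_{\bar g} = \int_{\mathcal{C}(Y)} (\widetilde{\chi}_\delta w_\eps)\, \lp_{g_\eps}(\widetilde{\chi}_\delta w_\eps)\, d\mu_{g_\eps}.
\]
I would split the integration domain into the inner region $\{r\le \delta/\eps\}$, where $\widetilde{\chi}_\delta \equiv 1$, and the transitional annulus $\{\delta/\eps \le r \le 2\delta/\eps\}$. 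In the first region the cut-off disappears and one is reduced to $\int w_\eps\, \lp_{g_\eps} w_\eps\, d\mu_{g_\eps}$.

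On the inner region, I would expand the integrand in powers of $\eps$ using Lemma~\ref{l:errori-precisi}, together with $w_\eps = U+\eps\Psi$. The $\eps^0$ term gives $\int U\lp_0 U\, d\mu_{g_0} = \mathcal{Y}_P$ by~\eqref{eq:lagrange-mult-U} and the normalization of $U$. The $\eps^1$ coefficient reduces, using self-adjointness of $\lp_0$, to
\[
2\int \Psi\, \lp_0 U\, d\mu_{g_0} + \int U\, \lp_1 U\, d\mu_{g_0} + \int m_1\, U\, \lp_0 U\, d\mu_{g_0},
\]
which vanishes thanks to: the orthogonality~\eqref{eq:ortho-Psi} combined with $\lp_0 U \propto U^{(n+2)/(n-2)}$ for the first term; the zero-mean condition~\eqref{eq:xi-zero-mean} applied to~\eqref{eq:L1}, after observing that the angular integrals of $\Delta_{h_0}\mathrm{tr}\xi$ and $\delta_{h_0}^2\xi$ on the closed link vanish, for the second; and the same zero-mean condition for the third, since $m_1 \propto \mathrm{tr}\xi$ by~\eqref{eq:m1}.

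For the $\eps^2$ coefficient, collecting every cross term yields
\[
\int \bigl[\Psi\lp_0\Psi + \Psi\lp_1 U + U\lp_1\Psi + U\lp_2 U + m_1(\Psi\lp_0 U + U\lp_0\Psi + U\lp_1 U) + m_2\, U\lp_0 U\bigr] d\mu_{g_0}.
\]
The key algebraic step --- and what I expect to be the main point of the proof --- is to invoke~\eqref{eq:L1-auto}, which gives $\int U(\lp_1+m_1\lp_0)\Psi\, d\mu_{g_0} = \int \Psi(\lp_1+m_1\lp_0)U\, d\mu_{g_0}$; combined with the self-adjointness of $\lp_0$, this symmetrizes the previous expression into precisely the claimed
\[
\int \Psi\lp_0\Psi\, d\mu_{g_0} + 2\int \Psi(\lp_1+m_1\lp_0)U\, d\mu_{g_0} + \int U(\lp_2+m_1\lp_1+m_2\lp_0)U\, d\mu_{g_0}.
\]
Each pairing is absolutely integrable on $\mathcal{C}(Y)$ when $n\ge 5$ thanks to the decay of $U$ and the bounds~\eqref{eq:est-Psi} on $\Psi,\nabla\Psi,\nabla^2\Psi$.

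It remains to control three error contributions. First, the $\eps^3$ remainder from $\lp_3$ and $m_3$ on the inner region: combining~\eqref{eq:L3} with the decay of $U$ and~\eqref{eq:est-Psi}, the resulting integrand is integrable on $\mathcal{C}(Y)$ when $n\ge 5$, so this term is $O(\eps^3)=o(\eps^2)$. Second, extending the leading integrals from $\{r\le\delta/\eps\}$ to all of $\mathcal{C}(Y)$ costs at most $O(\eps^{n-2}) = o(\eps^2)$ by the same decay estimates. Third, on the transitional annulus one has $|\nabla_{g_\eps}\widetilde{\chi}_\delta|_{g_\eps}^2 \le C\eps^2/\delta^2$ by~\eqref{eq:cut-off}, while $|w_\eps|$, $|\nabla w_\eps|$ and $R_{g_\eps}$ are easily estimated via Lemma~\ref{l:bd-Psi} and the decay of $U$; the Dirichlet, scalar curvature, and cut-off cross contributions are then all $O(\eps^{n-2})=o(\eps^2)$ for $n\ge 5$, concluding the proof.
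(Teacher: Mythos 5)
Your proposal follows essentially the same route as the paper's proof: reduce to $\int_M \widehat{u}_{\eps,\delta}\,\lp_{\bar g}\widehat{u}_{\eps,\delta}\,d\mu_{\bar g}$, rescale to $(\mathcal{C}(Y),g_\eps)$, split into the inner ball $\{r\le\delta/\eps\}$ and the cut-off annulus, kill the order-$\eps$ terms via \eqref{eq:ortho-Psi}, \eqref{eq:xi-zero-mean} and angular cancellation, symmetrize the $\eps^2$ cross terms with \eqref{eq:L1-auto}, and bound the annulus by $O(\eps^{n-2})$. The only point you pass over silently is that the identity $\int_M\bigl(a\abs{\nabla_{\bar g}\widehat{u}_{\eps,\delta}}^2_{\bar g}+R_{\bar g}\widehat{u}_{\eps,\delta}^2\bigr)\,d\mu_{\bar g}=\int_M\widehat{u}_{\eps,\delta}\,\lp_{\bar g}\widehat{u}_{\eps,\delta}\,d\mu_{\bar g}$ requires justification on a manifold with a conical singularity (the test function concentrates at $P$); the paper invokes Proposition~3.2 of \cite{AKM15} for this.
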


\begin{proof}
As for the case of a smooth closed manifold we have that
    \[
        \int_{M} \Big( a \abs{\nabla_{\bar{g}} \hspace{0.02cm} \widehat{u}_{\eps,\delta}}^2_{\bar{g}} + R_{\bar{g}} \hspace{0.02cm} \widehat{u}_{\eps,\delta}^2 \Big) \, d\mu_{\bar{g}}
        =
        \int_{M} \widehat{u}_{\eps,\delta} \lp_{\bar{g}} \widehat{u}_{\eps,\delta} \, d\mu_{\bar{g}},
    \]
where $\lp_{\bar{g}}$ is the conformal Laplacian on the conical manifold $(M,\bar{g})$, for a proof see Proposition~3.2 in~\cite{AKM15}.
As before, from the definition of $\widehat{u}_{\eps,\delta}$ and the discussion at the beginning of Section~\ref{s:exp-I} we obtain
    \begin{equation} \label{eq:L44-1}
        \int_{M} \widehat{u}_{\eps,\delta} \lp_{\bar{g}} \widehat{u}_{\eps,\delta} \, d\mu_{\bar{g}} = 
        \int_{\{ r \le \delta / \eps \}} w_{\eps} \lp_{g_{\eps}} w_{\eps} \, d\mu_{g_{\eps}}
        +
        \int_{\{ \delta / \eps \le r \le 2\delta / \eps \}} \check{w}_{\eps,\delta} \lp_{g_{\eps}} \check{w}_{\eps,\delta} \, d\mu_{g_{\eps}},
    \end{equation}
where for notation convenience we set $\check{w}_{\eps,\delta}(r,y) = \chi_{\delta}(\eps r) w_{\eps}(r,y)$.

We start from the region $\{r \leq \delta/\eps\}$. Recalling the definition of $w_\eps$ in~\eqref{eq:w-eps}, using Lemma~\ref{l:bd-Psi} and Lemma~\ref{l:errori-precisi} we find that 
\begin{align}
    \notag
    \int_{\{r \leq \delta/\eps\}} w_\eps \lp_{g_\eps} w_\eps \, d\mu_{g_\eps} = & \int_{\mathcal{C}(Y)}  U \lp_0 U \, d\mu_{g_0}
    +
    \eps \bigg( \int_{\mathcal{C}(Y)} \Psi \lp_0 U \, d\mu_{g_0} + \int_{\mathcal{C}(Y)} U \lp_0 \Psi \, d\mu_{g_0} \bigg)
    \\[1.5ex]
    \notag
    & + \eps \bigg( \int_{\mathcal{C}(Y)} U (\lp_1 + m_1 \lp_0) U \, d\mu_{g_0} \bigg)
    + \eps^2 \bigg( \int_{\mathcal{C}(Y)} \Psi \lp_0 \Psi \, d\mu_{g_0} \bigg)
    \\[1.5ex]
    \notag
    & + \eps^2 \bigg( \int_{\mathcal{C}(Y)} \Psi (\lp_1 + m_1 \lp_0) U \, d\mu_{g_0}
    + \int_{\mathcal{C}(Y)} U (\lp_1 + m_1 \lp_0) \Psi \, d\mu_{g_0}
    \bigg) \\[1.5ex]
    \label{eq:expansion-numerator}
    & + \eps^2 \bigg( \int_{\mathcal{C}(Y)} U (\lp_2 + m_1 \lp_1 + m_2 \lp_0) U \, d\mu_{g_0} \bigg) + o(\eps^2),
\end{align}
where we used the fact that all integrands are of class $L^1$, from the 
dimensional assumption and the decay of $U$ and its derivatives. 

We claim now that each term in the coefficient of $\eps$ vanishes. 
From~\eqref{eq:ortho-Psi} we deduce the vanishing of $\int \Psi \lp_0 U \, d\mu_{g_0}$, and the same holds for $\int U \lp_0 \Psi \, d\mu_{g_0}$ because $\lp_0$ is self-adjoint in $L^2(d\mu_{g_0})$. By~\eqref{eq:L1copy} and~\eqref{eq:xi-zero-mean} we also get 
$\int U \lp_1  U \, d\mu_{g_0} = 0$. Moreover, 
the fact that $U$ is radial and that the angular average of $m_1$ vanishes implies that $\int m_1 U \lp_0  U \, d\mu_{g_0} = 0$. 

We also notice that 
\[
\int_{\mathcal{C}(Y)} \Psi (\lp_1 + m_1 \lp_0) U \, d\mu_{g_0}
    = \int_{\mathcal{C}(Y)} U (\lp_1 + m_1 \lp_0) \Psi \, d\mu_{g_0}, 
\]
because we know from~\eqref{eq:L1-auto} that the operator $\lp_1 + m_1 \lp_0$ is self-adjoint with respect to $L^2(\mathcal{C}(Y),\mu_{g_0})$. 

With the last considerations, we proved the desired estimate for the first addend in the right-hand side of~\eqref{eq:L44-1}. To estimate the second addend in the right-hand side of~\eqref{eq:L44-1}, we notice that by Lemma \ref{l:bd-Psi} there exists $C=C(\delta) > 0$ such that 
    \begin{equation*}
    \abs{w_\eps} \leq C \eps^{n-2}, \quad  \abs{\nabla_{g_0} w_\eps}_{g_0} \leq C \eps^{n-1}, \quad \abs{\nabla^2_{g_0} w_\eps}_{g_0} \leq C \eps^{n}, 
    \quad \text{in $\{ \delta / \eps \le r \le 2\delta / \eps \}$}.
    \end{equation*}
  Therefore, using the properties~\eqref{eq:cut-off} of $\chi_{\delta}$, the Leibnitz rule, and Lemma~\ref{l:errori-precisi}, we obtain  
\begin{equation} \label{eq:annulus}
\abs*{ \int_{\{ \delta / \eps \le r \le 2\delta / \eps \} } \check{w}_{\eps, \delta} \lp_{g_\eps} \check{w}_{\eps, \delta} \, d \mu_{g_\eps}} \leq C \hspace{0.03cm} \mathrm{Vol}_{g_\eps} (A_{\eps,\delta}) \eps^{n-2} \eps^{n} 
\leq C \eps^{n-2} = o(\eps^2),
\end{equation}
where we set $ A_{\eps, \delta} := \{ \delta / \eps \le r \le 2\delta / \eps \}$. This gives the desired assertion.
\end{proof}

We have then the following result. 

\begin{proposition} \label{p:upper-bd-Y}
 For $n \ge 5$, let $\widehat{u}_{\eps, \delta}$ be as above, and 
 suppose we are under the assumptions of Theorem \ref{t:main}. 
 Then, for $\eps > 0$ sufficiently small, we have that 
 \[
 Q_{\bar{g}}(\widehat{u}_{\eps, \delta}) < \mathcal{Y}_P. 
 \]
\end{proposition}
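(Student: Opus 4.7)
The plan is to combine the rigorous expansions of Lemmas \ref{l:num} and \ref{l:den} to derive
\[
    Q_{\bar g}(\widehat{u}_{\eps,\delta}) = \mathcal{Y}_P + \eps^2 \Lambda + o(\eps^2),
\]
identify $\Lambda$ with $\tfrac{1}{2}I''(0)$ from Proposition \ref{p:exp-full}, and invoke Proposition \ref{p:neg-def} to conclude $\Lambda < 0$.

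For the leading order, I would observe that the zeroth-order term in the numerator equals $\int_{\mathcal{C}(Y)} U\lp_0 U\,d\mu_{g_0} = \mathcal{Y}_P \|U\|_{L^{2n/(n-2)}}^{2n/(n-2)} = \mathcal{Y}_P$ by \eqref{eq:lagrange-mult-U} and the normalization of $U$, while the denominator has leading order $1$. Denoting by $\mathcal{N}(\eps)$ and $\mathcal{D}(\eps)$ numerator and denominator, Taylor-expanding $\mathcal{D}(\eps)^{-(n-2)/n} = 1 - \tfrac{n-2}{n}(\mathcal{D}(\eps)-1) + O((\mathcal{D}(\eps)-1)^2)$ and multiplying by $\mathcal{N}(\eps)$ produces the constant $\mathcal{Y}_P$ together with an explicit coefficient $\Lambda$ gathering all the $\eps^2$-terms from the two lemmas.

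Identifying $\Lambda$ with the quadratic form in \eqref{eq:expansion-5} rests on three algebraic cancellations. First, the purely $\Psi$-quadratic pieces $\int\Psi\lp_0\Psi\,d\mu_{g_0}$ (from the numerator) and $-\mathcal{Y}_P\tfrac{n+2}{n-2}\int\Psi^2 U^{4/(n-2)}\,d\mu_{g_0}$ (from the denominator, via the expansion of $\mathcal{D}^{-(n-2)/n}$) reconstruct $\int\Psi\widetilde{\lp}\Psi\,d\mu_{g_0}$. Combining this with the cross term $2\int\Psi\lp_1 U\,d\mu_{g_0} = -2\int\Psi\widetilde{\lp}\Psi\,d\mu_{g_0}$, produced by the defining equation \eqref{eq:equation-Psi} for $\Psi$, yields exactly the term $-\int\Psi\widetilde{\lp}\Psi\,d\mu_{g_0}$ of \eqref{eq:expansion-5}. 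Second, all $m_1$-cross contributions cancel between numerator and denominator by using $\lp_0 U = \mathcal{Y}_P U^{(n+2)/(n-2)}$ together with the fact that $m_1$ has zero angular average, see \eqref{eq:m1}. Third, the remaining geometric pieces---involving $\int U\lp_2 U\,d\mu_{g_0}$, $\int m_1 U\lp_1 U\,d\mu_{g_0}$ and $\int U^{2n/(n-2)}m_2\,d\mu_{g_0}$---become explicit through the formulas of Lemma \ref{l:errori-precisi}; after angular integration by parts on $(Y,h_0)$ using Proposition \ref{prop:second-var-EH} and radial evaluation via the identities of Proposition \ref{prop:int-iden}, they collapse to $\tfrac{\omega}{2}\mathrm{Vol}_{h_0}(Y)^{(n-3)/(n-1)}\mathcal{R}''(h_0)[\xi,\xi] + \tfrac{\omega}{4}\int_Y((\mathrm{tr}\xi)^2-|\xi|^2)\,d\mu_{h_0}$, as required.

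The main obstacle lies in the last step. The raw expression produced by \eqref{eq:L1}, \eqref{eq:L2U} and \eqref{eq:m2} depends on $\eta = h''(0)$ through $\mathrm{tr}\eta$-factors and a $\mathsf{d}R_{h_0}[\eta]$-piece, while \eqref{eq:expansion-5} involves only $\xi = h'(0)$. The $\eta$-cancellation should arise from a precise combination of the $(n+1)\mathrm{tr}\eta$ contribution inside $\widehat{R}_2(\xi,\eta)$, the $\mathrm{tr}\eta$ contribution in $m_2$ weighted by $\tfrac{2}{n}\mathcal{Y}_P$, and the radial factor arising from $\lp_2 U$, all reconciled via Proposition \ref{prop:int-iden}. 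Once this identification has been carried out, the conclusion is immediate: Proposition \ref{p:neg-def} gives $\Lambda<0$, so that $Q_{\bar g}(\widehat{u}_{\eps,\delta}) < \mathcal{Y}_P$ for all sufficiently small $\eps > 0$. The assumption $n\geq 5$ enters to ensure the convergence of every integral on $\mathcal{C}(Y)$ appearing in the expansion, in particular $\omega < +\infty$.
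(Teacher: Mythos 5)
Your proposal follows essentially the same route as the paper: combine Lemmas \ref{l:num} and \ref{l:den}, use the defining equation $\widetilde{\lp}\Psi = -\lp_1 U$ to produce the term $-\int_{\mathcal{C}(Y)}\Psi\widetilde{\lp}\Psi\,d\mu_{g_0}$, reduce the remaining geometric integrals via Lemma \ref{l:errori-precisi} and Propositions \ref{prop:int-iden} and \ref{prop:second-var-EH}, and conclude by Proposition \ref{p:neg-def}. The $\eta$-cancellation you flag as the main obstacle goes through exactly as you predict: all $\eta$-dependence reduces to multiples of $\int_Y\mathrm{tr}\,\eta\,d\mu_{h_0}$ (since $\int_Y\mathsf{d}R_{h_0}[\eta]\,d\mu_{h_0}=-(n-2)\int_Y\mathrm{tr}\,\eta\,d\mu_{h_0}$) and the coefficients sum to zero, as the paper verifies in passing from~\eqref{eq:III-simplified}--\eqref{eq:II-simplified} to~\eqref{eq:expansion-5}.
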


\begin{proof}
From Lemma~\ref{l:den} we have that 
 \begin{multline}
    \label{eq:expansion-denominator-2}
    \bigg( \int_{M }  \abs{\widehat{u}_{\eps, \delta}}^{\frac{2n}{n-2}} \, d\mu_{\bar{g}} \bigg)^{-\frac{n-2}{n}} \\
    = 1
    - \eps^2 \int_{\mathcal{C}(Y)} \bigg( \frac{n+2}{n-2} \Psi^2 U^{\frac{4}{n-2}} + 2 \Psi U^{\frac{n+2}{n-2}} m_1 + \frac{n-2}{n} U^{\frac{2n}{n-2}} m_2 \bigg) \, d\mu_{g_0} + o(\eps^2),
\end{multline}
where we used the elementary formula $(1+t)^{\alpha} = 1 + \alpha t + o(t)$ for $t \to 0$, with $\alpha = - (n-2)/n$. 

From Lemma~\ref{l:num} and~\eqref{eq:expansion-denominator-2} it follows immediately that the term of order $\eps$ in the expansion of $Q_{\bar{g}}(\widehat{u}_{\eps, \delta})$ vanishes, while the term of order $\eps^2$ 
is given by 
\begin{multline}
    \label{eq:expansion-3}
    \int_{\mathcal{C}(Y)} \Psi \lp_0 \Psi \, d\mu_{g_0} + 2 \int_{\mathcal{C}(Y)} \Psi \lp_1 U \, d\mu_{g_0}
    + \int_{\mathcal{C}(Y)} U (\lp_2 + m_1 \lp_1 ) U \, d\mu_{g_0} \\[1ex]
    - \mathcal{Y}_P \int_{\mathcal{C}(Y)} \bigg( \frac{n+2}{n-2} \Psi^2 U^{\frac{4}{n-2}} -\frac{2}{n} U^{\frac{2n}{n-2}} m_2 \bigg) \, d\mu_{g_0},
\end{multline}
where we also take into account the fact that $\lp_0 U = \mathcal{Y}_P U^{\frac{n+2}{n-2}}$.

Let $\Psi$ be as above, namely the  solution of~\eqref{eq:equation-Psi} and~\eqref{eq:FU}: with this choice, we can rewrite the quantity in~\eqref{eq:expansion-3} as follows
\begin{equation}
    \label{eq:expansion-4}
    - \int_{\mathcal{C}(Y)} \Psi \widetilde{\lp} \Psi \, d\mu_{g_0} + \int_{\mathcal{C}(Y)} U (\lp_2 + m_1 \lp_1 ) U \, d\mu_{g_0} 
    + \frac{2 \mathcal{Y}_P}{n} \int_{\mathcal{C}(Y)}  U^{\frac{2n}{n-2}} m_2 \, d\mu_{g_0}.
\end{equation}

From~\eqref{eq:m1}, \eqref{eq:L1}, \eqref{eq:relazioni-integrali-bolla-2} and integrating by parts we obtain
\begin{equation}
    \label{eq:III-simplified}
    \int_{\mathcal{C}(Y)} U m_1 \lp_1 U \, d\mu_{g_0} = \omega \int_Y \bigg( \frac{1}{2} \abs{\nabla \mathrm{tr}  \xi}^2 - \frac{(n-1)(n-4)}{2(n-2)} ( \mathrm{tr}  \xi )^2 - \frac{1}{2} \langle \delta \xi , d ( \mathrm{tr}  \xi ) \rangle \bigg) \, d \mu_{h_0},
\end{equation}
where $\omega$ is the constant given by \eqref{eq:cost-omega}. 
On the other hand, from~\eqref{eq:m2} and~\eqref{eq:relazioni-integrali-bolla-1} we have
\begin{equation}
    \label{eq:IV-simplified}
    \frac{2 \mathcal{Y}_P}{n} \int_{\mathcal{C}(Y)}  U^{\frac{2n}{n-2}} m_2 \, d\mu_{g_0} = \frac{n(n-4)}{n-2} \omega \int_Y \bigg( \frac{1}{4} ( \mathrm{tr}  \xi )^2 - \frac{1}{2} \abs{\xi}^2 + \frac{1}{2}  \mathrm{tr} \hspace{0.02cm} \eta \bigg) \, d \mu_{h_0}.
\end{equation}
From~\eqref{eq:L2U},~\eqref{eq:der-scalar-2} and~\eqref{eq:relazioni-integrali-bolla-2} we also find  
\begin{align}
    \notag
    \int_{\mathcal{C}(Y)} U \lp_2 U \, d\mu_{g_0} = & - \frac{n(n-1)}{n-2} \omega \int_Y \Big( \abs{\xi}^2 - \mathrm{tr} \hspace{0.02cm} \eta \Big) \, d \mu_{h_0} + n \omega \int_Y \Big( \abs{\xi}^2 - \mathrm{tr}  \hspace{0.02cm} \eta \Big) \, d \mu_{h_0} \\[1.5ex]
    \notag
    & + \omega \int_Y \bigg( \! - \frac{1}{4} \abs{\nabla \xi}^2 
        + \frac{1}{2} \abs{\delta \xi}^2
        - \frac{1}{4} \abs{\nabla \mathrm{tr} \xi}^2 
        + \frac{n-2}{2} \abs{\xi}^2
        + \frac{1}{2} \langle A_{h_0}(\xi), \xi \rangle
        \bigg) \, d \mu_{h_0} \\[1.5ex]
        \label{eq:II-simplified}
    & - \omega \int_Y \frac{n-2}{2} \mathrm{tr} \hspace{0.02cm} \eta \, d\mu_{h_0} + \omega \int_Y \bigg( \frac{3}{4} \abs{\xi}^2 - \frac{1}{4} ( \mathrm{tr}  \xi )^2 - \mathrm{tr} \hspace{0.02cm} \eta \bigg) \, d\mu_{h_0}.
\end{align}
Putting together~\eqref{eq:III-simplified}, \eqref{eq:IV-simplified}, \eqref{eq:II-simplified} and Proposition~\ref{prop:second-var-EH}, we obtain that the expression in 
\eqref{eq:expansion-3} becomes 
\[
- \int_{\mathcal{C}(Y)} \Psi \widetilde{\lp} \Psi \, d\mu_{g_0}
                + \frac{\omega}{2} \mathrm{Vol}_{h_0}(Y)^{\frac{n-3}{n-1}} \mathcal{R}''(h_0)[\xi,\xi]
                + \frac{\omega}{4} \int_Y \Big( (\mathrm{tr} \xi )^2 - \abs{\xi}^2 \Big) \, d \mu_{h_0},
\]
which by Proposition \ref{p:neg-def} is negative-definite. 
\end{proof}

Finally, as anticipated in the introduction, the proof of Theorem~\ref{t:main} follows by combining the previous proposition with Theorem~1.12 in~\cite{ACM}.

\subsection{The four-dimensional case} \label{ss:n=4}

This subsection is devoted to the four-dimensional case, that leads to 
diverging integral quantities and to an extra $\abs{\log \eps}$ factor in the second order term in the expansion of the Yamabe quotient on a suitable modification of the functions $\widehat{u}_{\eps,\delta}$ defined by~\eqref{eq:global-test-functions}.

Again, in order to better control the error terms, we need to construct a sufficiently accurate approximate solution to the Yamabe equation, which is done by linearizing  $\lp_{g_\eps} u = \mathcal{Y}_P u^{\frac{n+2}{n-2}}$  near the standard bubble $U$. The idea here is that, since the 
leading orders arise from large values of the geodesic distance $r$ from $P$ 
in the metric $g_\eps$,  
in the linearized operator $\widetilde{\lp}$ on $\Psi$, see \eqref{eq:equation-Psi}-\eqref{eq:L1copy}, we may discard the potential term given by $U^{\frac{4}{n-2}}$ (this would not be possible in dimension $n \geq 5$, as it would produce errors of order $O(\eps^2)$). We recall that $\Psi = \Psi_1 + \Psi_2$, with $\Psi_1$ given by \eqref{eq:explicit-Psi1} and $\Psi_2$ 
solving \eqref{eq:eq-Psi2}. Recalling also the functions $f$ and $G$ from \eqref{eq:decomposition-xi}, 
we set for convenience 
\[
K = f - G. 
\]
From the expression of $U$ one has that 
for $r$ large (see \eqref{eq:parti-radiali})
\[
    q_2(r) = - 2 c_Y r^{-3} + O(r^{-5}) = 2 c_Y \Delta_{g_0}(r^{-1}) + O(r^{-5}),
\]
where $c_Y$ is the constant defined by~\eqref{eq:bolla-std}.

Let us consider a smooth radial function $e(r)$ such that 
\[
    e(r) =
    \begin{cases}
    \displaystyle
    r^{-1} & \text{for $r \ge 2$}, \\[0.5ex]
   0 & \text{for $r \le 1$}. 
\end{cases}
\]
If we define 
\begin{equation}\label{eq:hat-Psi-2}
 \widehat{\Psi}_2(r,y) := e(r) H(y), \ 
\text{where} \ - \Delta_{h_0} H + H = 2 a^{-1} \Delta_{h_0} K \ \text{on $Y$},
\end{equation}
then by a direct computation we derive 
    \begin{equation}\label{eq:delta-hat-Psi}
        - a \Delta_{g_0} \widehat{\Psi}_2 =
        a r^{-3} \big( - \Delta_{h_0} H + H \big)
        =
        2 r^{-3} \Delta_{h_0} K, \qquad \forall r \ge 2, \forall y \in Y.
    \end{equation}
Moreover, if $n=4$, then $a=4(n-1)/(n-2) = 6$.

We would like to prove now the counterparts of Propositions \ref{p:exp-full},  
\ref{p:neg-def} and \ref{p:upper-bd-Y}. Concerning the former two, due to logarithmic divergences, 
we cannot integrate on $\mathcal{C}(Y)$ for $r \in [0,\infty)$, and hence we will only consider first a corresponding 
section over the link $Y$. 

\begin{lemma}\label{l:quadr-4}
If $n = 4$ and if we are under the assumptions of  Theorem \ref{t:main}, for $r \geq 2$ the quadratic 
form 
    \begin{equation}
    \label{eq:expansion-55}
    J(\xi) : = - a r^4 \int_Y \abs{\nabla_{g_0} \widehat{\Psi}_2}_{g_0}^2 \, d\mu_{h_0}
    + \frac{1}{2} \mathrm{Vol}_{h_0}(Y)^{\frac{1}{3}} \mathcal{R}''(h_0)[\xi,\xi]
    + \frac{1}{4} \int_Y \Big( (\mathrm{tr} \xi )^2 - \abs{\xi}^2 \Big) \, d \mu_{h_0},
\end{equation}
  is negative-definite.     
\end{lemma}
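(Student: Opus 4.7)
The plan is to follow the same strategy as Proposition~\ref{p:neg-def}, with two adaptations dictated by dimension $n=4$: the $\Psi_1$-type contribution no longer appears (its coefficient $\tfrac{n-4}{4(n-1)}$ in~\eqref{eq:coefficiente-q1} vanishes), and the $\Psi_2$-type contribution is replaced by the angular energy of $\widehat{\Psi}_2$ at a fixed radius $r \geq 2$, which is exactly the first term of $J(\xi)$.

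First, using the tensor decomposition~\eqref{eq:decomposition-xi}, I would write $\xi = f h_0 + \nabla^2 G + \delta^* \alpha + \eta$ and set $K := f - G$. Combining~\eqref{eq:R-parte-conforme},~\eqref{eq:R-parte-Lie}, the mutual orthogonality in Lemma~\ref{lemma:decomposition-tensor}, the strict stability~\eqref{eq:R-parte-tt}, and the computation~\eqref{eq:third-summand-dec} specialized to $n = 4$, the sum of the last two terms of $J(\xi)$ reduces to
\[
    \frac{1}{2} \int_Y |\nabla_{h_0} K|^2 \, d\mu_{h_0} + \frac{1}{2} \mathrm{Vol}_{h_0}(Y)^{1/3} \mathcal{R}''(h_0)[\eta, \eta] - \frac{1}{4} \int_Y \big(|\delta^* \alpha|^2 + |\eta|^2\big) \, d\mu_{h_0},
\]
with the last two summands non-positive and vanishing if and only if $\delta^*\alpha = 0 = \eta$.

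Second, since $\widehat{\Psi}_2(r,y) = r^{-1} H(y)$ for $r \geq 2$, a direct computation in the cone metric $g_0 = dr^2 + r^2 h_0$ gives $r^4 |\nabla_{g_0} \widehat{\Psi}_2|_{g_0}^2 = H^2 + |\nabla_{h_0} H|^2$. Integrating by parts on $Y$ and using the defining equation $-\Delta_{h_0} H + H = 2 a^{-1} \Delta_{h_0} K$ in~\eqref{eq:hat-Psi-2}, I then Fourier-expand $K = \sum_{k \geq 1} c_k u_k$ in an $L^2(Y,d\mu_{h_0})$-orthonormal basis $\{u_k\}$ of $-\Delta_{h_0}$-eigenfunctions, noting that $c_0 = 0$ since $f$ and $G$ have zero average by~\eqref{eq:xi-zero-mean}. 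The equation for $H$ yields $H_k = -\tfrac{2}{a} \tfrac{\lambda_k c_k}{1+\lambda_k}$, hence
\[
    -a r^4 \int_Y |\nabla_{g_0} \widehat{\Psi}_2|^2 \, d\mu_{h_0} \;=\; -\frac{4}{a} \sum_{k \geq 1} \frac{\lambda_k^2 c_k^2}{1 + \lambda_k}.
\]

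Finally, substituting $a = 6$ and adding the previous two displays gives
\[
    J(\xi) \;\leq\; \sum_{k \geq 1} \frac{\lambda_k (3 - \lambda_k)}{6(1 + \lambda_k)} \, c_k^2 \;+\; \big[\text{non-positive stable terms}\big].
\]
Since $(Y,h_0)$ is a $3$-dimensional Einstein manifold with $\mathrm{Ric}(h_0) = 2 h_0$, the Lichnerowicz-Obata theorem gives $\lambda_k \geq \lambda_1 > 3$ for every $k \geq 1$, where the strict inequality follows from the hypothesis of Theorem~\ref{t:main} that $(Y, h_0)$ is not isometric to $(S^3, g_{S^3})$. Every coefficient in the sum is therefore strictly negative, so $J(\xi) < 0$ unless $\xi \equiv 0$. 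The main conceptual difference from the higher-dimensional case is that the delicate Sturm-Liouville monotonicity of $\{\beta_k\}$ underlying Proposition~\ref{prop:monotonicity-beta} is here bypassed by the explicit Fourier computation, made possible by the simple asymptotic profile $\widehat{\Psi}_2 = r^{-1} H$ for large $r$.
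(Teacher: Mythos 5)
Your argument is correct and is essentially the paper's own proof: the same reduction of the last two terms of $J(\xi)$ to $\tfrac12\int_Y\abs{\nabla_{h_0}K}^2\,d\mu_{h_0}$ plus non-positive stable/Lie contributions via \eqref{eq:third-summand-dec}, and the same Fourier expansion of $H$ against that of $K$ combined with the Lichnerowicz--Obata bound $\lambda_1>3$ (the paper phrases the latter as the monotonicity of $t/(1+t)$ rather than as your single closed-form coefficient $\lambda_k(3-\lambda_k)/(6(1+\lambda_k))$, but this is the same estimate). The only caveat is your closing clause ``unless $\xi\equiv 0$'': the form $J$ actually vanishes on the pure-gauge directions $\xi=\nabla^2_{h_0}f+fh_0$ (where $K=0$, $\eta=0$, $\delta^*\alpha=0$), and strict negativity relies on these being excluded by hypothesis $(\xi_P)$ in Theorem~\ref{t:main}, exactly as in the paper.
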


The role of the quantity in \eqref{eq:expansion-55} will be clear from the 
proof of Proposition \ref{p:upper-bd-Y-4}, and we stress that the gradient 
of $\widehat{\Psi}_2$ is taken with respect to the full $g_0$-metric. We notice also that $J(\xi)$ is independent of $r$ as a consequence of the homogeneity of $\widehat{\Psi}_2$ when $r \geq 2$.

\begin{proof}
From~\eqref{eq:delta-hat-Psi}, for any $r \geq 2$ we have that 
    \begin{equation} \label{eq:L46-1}
    a r^4 \int_Y \abs{\nabla_{g_0} \widehat{\Psi}_2}_{g_0}^2 \, d \mu_{h_0}
    = a \int_Y \left( \abs{\nabla_{h_0} H}_{h_0}^2 + H^2 \right) \, d \mu_{h_0}. 
    \end{equation}
If, using Fourier decomposition on $Y$ with an orthonormal basis $\{ u_k \}_{k \in \mathbb{N}}$ of $L^2(Y, d\mu_{h_0})$ such that $-\Delta_{h_0} u_k = \lambda_k u_k$, we set
\[
H = \sum_{k=1}^{+\infty} d_k u_k, \quad K = \sum_{k=1}^{+\infty} c_k u_k,
\]
from the equation $- \Delta_{h_0} H + H =  2 a^{-1} \Delta_{h_0} K$ 
we must impose the relation 
\[ 
d_k = 2 a^{-1} \frac{\lambda_k}{1 + \lambda_k} c_k, 
\]
which implies 
\[
a \int_Y \big( \abs{\nabla_{h_0} H}_{h_0}^2 + H^2 \big) \, d\mu_{h_0} = 4 a^{-1} \sum_{k=1}^{+\infty} \frac{\lambda_k^2}{(1 + \lambda_k)} c_k^2.  
\]
Combining the Lichnerowicz's eigenvalue estimate ($\lambda_1 > 3$), which follows from our assumptions on $Y$, with the monotonicity of the function $t/(1+t)$ we find that
\[
4 a^{-1} \frac{\lambda_k^2}{(1 + \lambda_k)} c_k^2 
= \frac{2}{3} \frac{\lambda_k}{(1 + \lambda_k)} \lambda_k c_k^2 
>\frac 12 \lambda_k c_k^2,
\]
so, summing over $k$,  we deduce that 
\begin{equation} \label{eq:L46-2}
a \int_Y \left( \abs{\nabla_{h_0} H}_{h_0}^2 + H^2 \right) \, d\mu_{h_0} > \frac{1}2 \int_Y 
\abs{\nabla_{h_0} K}_{h_0}^2 \, d \mu_{h_0}. 
\end{equation}
We recall here that $c_0 = 0$ by \eqref{eq:xi-zero-mean}, and it follows from~\eqref{eq:hat-Psi-2} that also $d_0 = 0$. 

On the other hand, reasoning as for the proof of Proposition \ref{p:neg-def}, we have that 
\begin{equation} \label{eq:L46-3}
\frac{1}{2} \mathrm{Vol}_{h_0}(Y)^{\frac{1}{3}} \mathcal{R}''(h_0)[\xi,\xi]
    + \frac{1}{4} \int_Y \Big( (\mathrm{tr} \xi )^2 - \abs{\xi}^2 \Big) \, d \mu_{h_0}  
    \le \frac{1}2 \int_Y \abs{\nabla_{h_0} K}_{h_0}^2 \, d \mu_{h_0}. 
\end{equation}
Putting together~\eqref{eq:L46-1},~\eqref{eq:L46-2} and~\eqref{eq:L46-3} the conclusion follows. 
\end{proof}

We have next the counterpart of Proposition \ref{p:upper-bd-Y}. We define 
(see \eqref{eq:explicit-Psi1} and \eqref{eq:hat-Psi-2})
\[ 
\widehat{\Psi} = \Psi_1 + c_Y \widehat{\Psi}_2, \quad \widehat{w}_\eps(r,y) = U(r) + \eps \widehat{\Psi}(r,y), \quad \text{and} \quad \bar{u}_{\eps}(s,y) = \eps^{-\frac{n-2}{2}} \big( U(s/\eps) + \eps \widehat{\Psi}(s/ \eps, y) \big)
\]
and for a cut-off function $\chi_{\delta}$ as in \eqref{eq:cut-off}, 
we set also 
\begin{equation} \label{eq:global-test-functions-4}
    \widetilde{u}_{\eps, \delta}(x) =
        \begin{cases}
        \displaystyle
            \chi_\delta (\sigma(x)) \bar{u}_{\eps}(\sigma(x)) & \text{if $x \in \mathcal{U}$} \\[0.5ex]
            0 & \text{otherwise in $M$},
        \end{cases}
    \end{equation}
where $\mathcal{U}$ and $\sigma$ are defined as in~\eqref{eq:def-sigma}.

\begin{proposition} \label{p:upper-bd-Y-4}
 For $n \ge 4$, let $\widetilde{u}_{\eps, \delta}$ be as above, and 
 suppose we are under the assumptions of Theorem \ref{t:main}. 
 Then, for $\eps > 0$ sufficiently small, we have that 
 \[
 Q_{\bar{g}}(\widetilde{u}_{\eps, \delta}) < \mathcal{Y}_P. 
 \]
\end{proposition}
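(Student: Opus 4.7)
The plan is to carry over verbatim the strategy of Proposition \ref{p:upper-bd-Y}: change variables $s = \eps r$, localize via $\chi_\delta$, and expand both numerator and denominator of $Q_{\bar{g}}(\widetilde{u}_{\eps,\delta})$ in $\eps$ using Lemma \ref{l:errori-precisi}. Two features distinguish the four-dimensional case. First, $\Psi$ is replaced by $\widehat{\Psi} = \Psi_1 + c_Y \widehat{\Psi}_2$, since for $n = 4$ the equation $\widetilde{\lp} \Psi_2 = q_2 \Delta(G - f)$ admits no solution in $\mathscr{D}^{1,2}_{g_0}$; $\widehat{\Psi}_2$ instead solves only the leading Laplace equation on the cone away from the tip, as in \eqref{eq:delta-hat-Psi}. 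Second, several integrals that were finite on $\mathcal{C}(Y)$ in higher dimensions diverge logarithmically at infinity in $n = 4$, producing a $\log(\delta/\eps)$ factor after truncation at the natural scale $r \lesssim \delta/\eps$.

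For the denominator I would first note that $\widehat{\Psi}_2 \sim r^{-1}$, $\Psi_1 \sim -\tfrac{(n-2) c_Y}{2} r^{1-n} f$ which is $O(r^{-3})$ in $n = 4$, and $U \sim c_Y r^{-2}$, so every integrand appearing in the $\eps^2$-term of Lemma \ref{l:den} (namely $\widehat{\Psi}^2 U^2$, $\widehat{\Psi} U^3 m_1$ and $U^4 m_2$) decays integrably. Thus $\int_M \widetilde{u}_{\eps,\delta}^{4} \, d\mu_{\bar{g}} = 1 + O(\eps^2)$ with no log factor, and the $\eps$ term vanishes by the zero angular means of $\Psi_1$, $\widehat{\Psi}_2$ and $m_1$. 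For the numerator one mimics Lemma \ref{l:num}: the leading value is $\mathcal{Y}_P$, the $\eps$ term again vanishes, and the $\eps^2$ contribution splits into a quadratic part $\int \widehat{\Psi} \lp_0 \widehat{\Psi}$, a cross term $2 \int \widehat{\Psi}(\lp_1 + m_1 \lp_0) U$, and a pure-$U$ piece $\int U(\lp_2 + m_1 \lp_1) U$, all integrated on $\{r \le \delta/\eps\}$ modulo the boundary-annulus error from the cutoff, which is controlled exactly as in \eqref{eq:annulus}.

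The central step is to extract the coefficient of $\log(\delta/\eps)$ from these $\eps^2$-terms. Logarithmic growth enters only from (i) the $\widehat{\Psi}_2$-contributions, via $|\nabla_{g_0} \widehat{\Psi}_2|^2 \sim r^{-4}$ integrated against $r^3 \, dr$, and (ii) the pure-$U$ integrals, where the role of $\omega$ from $n \ge 5$ is played by $\omega(\eps) := \int_0^{\delta/\eps} U^2 r^3 \, dr = c_Y^2 \log(\delta/\eps) + O(1)$. All $\Psi_1$-only contributions and the cross terms between $\Psi_1$ and $c_Y\widehat{\Psi}_2$ stay bounded, thanks to the faster decay of $\Psi_1$. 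Using the identity $-a \Delta_{g_0} \widehat{\Psi}_2 = 2 r^{-3} \Delta_{h_0} K$ for $r \ge 2$, integration by parts (boundary terms at $r = \delta/\eps$ yield only $O(1)$), the integral identities of Proposition \ref{prop:int-iden} in their truncated form, and the algebraic simplifications \eqref{eq:III-simplified}--\eqref{eq:II-simplified}, the log-coefficient should reduce to exactly $c_Y^2 J(\xi)$, with $J$ the quadratic form introduced in \eqref{eq:expansion-55}.

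Once this is established, Lemma \ref{l:quadr-4} gives $J(\xi) < 0$, and hence
\[
Q_{\bar{g}}(\widetilde{u}_{\eps,\delta}) = \mathcal{Y}_P + \eps^2 \lvert \log \eps\rvert \, c_Y^2 J(\xi) + O(\eps^2) < \mathcal{Y}_P \qquad \text{for $\eps$ small enough.}
\]
The main obstacle I anticipate is the careful bookkeeping of which integrals are actually log-divergent (and with which leading asymptotics as $r \to \infty$), since in four dimensions practically every piece of the $\eps^2$-coefficient from the $n \ge 5$ expansion acquires a logarithm; one has to ensure that all these logs assemble cleanly into $c_Y^2 J(\xi)$ rather than leaving uncancelled subleading log terms.
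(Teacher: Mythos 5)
Your proposal follows essentially the same route as the paper's proof: the same test function $\widehat{\Psi}=\Psi_1+c_Y\widehat{\Psi}_2$, the same localization and $\eps^2$-expansion of numerator and denominator, the same identification of the logarithmically divergent pieces (the $\widehat{\Psi}_2$-quadratic term, the cross term with $\lp_1 U$, and the truncated pure-$U$ integrals whose $\omega$ becomes $c_Y^2\log(\delta/\eps)+O(1)$), and the same reduction of the log-coefficient to $c_Y^2 J(\xi)<0$ via Lemma \ref{l:quadr-4}. The bookkeeping you flag as the main obstacle is exactly what the paper carries out, using the asymptotics $\widehat{\Psi}=c_Y\widehat{\Psi}_2+O(r^{-3})$ and $\lp_1U=-2c_Yr^{-3}\Delta_{h_0}K+O(r^{-5})$ to collapse the cross term into $-2a\int|\nabla_{g_0}\widehat{\Psi}|_{g_0}^2+O(\eps^2)$.
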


\begin{proof}
As for the final part in the proof of Proposition~\ref{p:upper-bd-Y}, we will prove that 
$Q_{\bar{g}}(\widetilde{u}_{\eps, \delta})$, starting from the denominator in the Yamabe quotient. 

Exactly as in the proof of Lemma~\ref{l:den}, see also Remark \ref{r:den-4}, one finds that 
\[
\int_{M }  \abs{\widetilde{u}_{\eps, \delta}}^4 \, d\mu_{\bar{g}} = 1 + \eps^2 \int_{\mathcal{C}(Y)} \bigg( 6 \widehat{\Psi}^2 U^2 + 4 \widehat{\Psi} U^3 m_1 + U^4 m_2 \bigg) \, d\mu_{g_0} + o(\eps^2),
\]
using the fact that, by the explicit definition of $\widehat{\Psi}$, also here all the 
integrands are of class $L^1$, and the coefficient of $\eps$ vanishes for the same reasons as before.

Therefore, as for~\eqref{eq:expansion-denominator-2} 
we get that 
\begin{equation}
    \label{eq:expansion-denominator-2-4}
    \bigg( \int_{M }  \abs{\widetilde{u}_{\eps, \delta}}^4 \, d\mu_{\bar{g}} \bigg)^{-\frac{1}{2}} = 1 + O(\eps^2). 
\end{equation}
We next turn to the expression in the numerator of $Q_{\bar{g}}(\widetilde{u}_{\eps, \delta})$. Similarly to Lemma~\ref{l:num} we have
\[
        \int_{M} \Big( a \abs{\nabla_{\bar{g}} \hspace{0.03cm} \widetilde{u}_{\eps,\delta}}^2_{\bar{g}} + R_{\bar{g}} \hspace{0.03cm} \widetilde{u}_{\eps,\delta}^2 \Big) \, d\mu_{\bar{g}}
        =
        \int_{\{ r \le \delta / \eps \}} \widehat{w}_{\eps} \lp_{g_{\eps}} \widehat{w}_{\eps} \, d\mu_{g_{\eps}}
        +
        \int_{\{ \delta / \eps \le r \le 2\delta / \eps \}} \widetilde{w}_{\eps,\delta} \lp_{g_{\eps}} \widetilde{w}_{\eps,\delta} \, d\mu_{g_{\eps}},
\]
where for notation convenience we set $\widetilde{w}_{\eps,\delta}(r,y) = \chi_{\delta}(\eps r) \widehat{w}_{\eps}(r,y)$.

Integrating first 
in the region $B_{\delta/\eps} := \{r \leq \delta/\eps\}$, where $\widehat{w}_{\eps} = U + \eps \widehat{\Psi}$, the counterpart of~\eqref{eq:expansion-numerator} is 
\begin{align}
    \notag
    \int_{B_{\delta/\eps}} \widehat{w}_\eps \lp_{g_\eps} \widehat{w}_\eps \, d\mu_{g_\eps} = & \int_{B_{\delta/\eps}}  U \lp_0 U \, d\mu_{g_0}
    +
    \eps \bigg( \int_{B_{\delta/\eps}} \widehat{\Psi} \lp_0 U \, d\mu_{g_0} + \int_{B_{\delta/\eps}} U \lp_0 \widehat{\Psi} \, d\mu_{g_0} \bigg)
    \\[1.5ex]
    \notag
    & + \eps \bigg( \int_{B_{\delta/\eps}} U (\lp_1 + m_1 \lp_0) U \, d\mu_{g_0} \bigg)
    + \eps^2 \bigg( \int_{B_{\delta/\eps}} \widehat{\Psi} \lp_0 \widehat{\Psi} \, d\mu_{g_0} \bigg)
    \\[1.5ex]
    \notag
    & + \eps^2 \bigg( \int_{B_{\delta/\eps}} \widehat{\Psi} (\lp_1 + m_1 \lp_0) U \, d\mu_{g_0}
    + \int_{B_{\delta/\eps}} U (\lp_1 + m_1 \lp_0) \widehat{\Psi} \, d\mu_{g_0}
    \bigg) \\[1.5ex]
    \label{eq:expansion-numerator-4}
    & + \eps^2 \bigg( \int_{B_{\delta/\eps}} U (\lp_2 + m_1 \lp_1 + m_2 \lp_0) U \, d\mu_{g_0} \bigg) + O(\eps^2),
\end{align}
where the control $O(\eps^2)$ on the error term 
can be obtained from~\eqref{eq:exp-mu},~\eqref{eq:exp-Lapl} and the decay of $\widehat{\Psi}$. 

Contrarily to~\eqref{eq:expansion-numerator}, see the comments after it, we cannot integrate on the whole $\mathcal{C}(Y)$, since some terms in \eqref{eq:expansion-numerator-4} are not of class $L^1$.  In fact, looking for example at the integrands in the coefficient of $\eps^2$, scaling-wise they all decay like $r^{-4}$, which is borderline non-integrable in four dimensions, but those where $\lp_0 U$ appears have faster decay due to \eqref{eq:lagrange-mult-U}.

By the decay of $U$ we have that $\int_{B_{\delta/\eps}}  U \lp_0 U \, d\mu_{g_0} = \mathcal{Y}_P + o(\eps^2)$. Moreover, all the terms appearing 
in the coefficient of $\eps$ vanish by angular cancellation, as for the proof of Lemma~\ref{l:num}. 

We turn next to the terms in \eqref{eq:expansion-numerator-4} that 
multiply $\eps^2$. Since the function $U m_2 \lp_0 U$ is of class $L^1(\mathcal{C}(Y), d\mu_{g_0})$, 
we are left with 
\begin{multline*}
    \eps^2 \bigg( \int_{B_{\delta/\eps}} \widehat{\Psi} \lp_0 \widehat{\Psi} \, d\mu_{g_0} +  \int_{B_{\delta/\eps}
} \widehat{\Psi} (\lp_1 + m_1 \lp_0) U \, d\mu_{g_0}
    + \int_{B_{\delta/\eps}
} U (\lp_1 + m_1 \lp_0) \widehat{\Psi} \, d\mu_{g_0}
    \bigg) \\[1ex]
   + \eps^2 \bigg( \int_{B_{\delta/\eps}
} U (\lp_2 + m_1 \lp_1 ) U \, d\mu_{g_0} \bigg) + O(\eps^2).
\end{multline*}
Recalling that $\widehat{\Psi}$ has inverse linear decay at infinity, while 
$U$ has inverse quadratic decay (with natural decays for their derivatives), with an integration by parts (producing boundary terms of order $O(\eps^2)$) we obtain that the latter quantity becomes 
\begin{equation}\label{eq:leading}
\eps^2 \bigg( \int_{B_{\delta/\eps}
} a \abs{\nabla_{g_0} \widehat{\Psi}}_{g_0}^2   \, d\mu_{g_0} +  2 \int_{B_{\delta/\eps}
} \widehat{\Psi} \lp_1 U \, d\mu_{g_0} + \int_{B_{\delta/\eps}
} U (\lp_2 + m_1 \lp_1 ) U \, d\mu_{g_0}
    \bigg) + O(\eps^2), 
\end{equation} 
where we used the fact that $R_{g_0} \equiv 0$, and also that $\widehat{\Psi} m_1 \lp_0 U$ 
is of class $L^1(\mathcal{C}(Y), d\mu_{g_0})$. 

We now notice that, for $n = 4$, 
\[
\frac{U'(r)}{n-2} + \frac{U(r)}{r} = O(r^{-5}). 
\]
This relation implies that for $r$ large
\[
\widehat{\Psi} = c_Y \widehat{\Psi}_2 + O(r^{-3}), \quad 
\nabla_{g_0} \widehat{\Psi} = c_Y \nabla_{g_0} \widehat{\Psi}_2 + O(r^{-4}),
\]
see~\eqref{eq:L1-2}-\eqref{eq:explicit-Psi1}, which yields 
\[
\int_{B_{\delta/\eps}} \abs{\nabla_{g_0} \widehat{\Psi}}_{g_0}^2 \, d\mu_{g_0} = c_Y^2 \int_{B_{\delta/\eps}} \abs{\nabla_{g_0} \widehat{\Psi}_2}_{g_0}^2 \, d\mu_{g_0}
+ O(\eps^2), 
\]
and together with 
$U(r) = c_Y r^{-2} + O(r^{-4})$ that 
\[
\lp_1  U = - 2 c_Y r^{-3}   \Delta_{h_0} 
 K   + O(r^{-5}), \quad \text{for $r$ large,}
\]
see~\eqref{eq:bolla-std} and~\eqref{eq:L1-2}, which implies 
\[
\int_{B_{\delta/\eps}
} \widehat{\Psi} \lp_1 U \, d\mu_{g_0} = - a \int_{B_{\delta/\eps}} \abs{\nabla_{g_0} \widehat{\Psi}}_{g_0}^2 \, d\mu_{g_0}  
+ O(\eps^2). 
\]
By these considerations, \eqref{eq:m1}, \eqref{eq:L1} and \eqref{eq:L2U} the expression in \eqref{eq:leading}
becomes, after integrating 
by parts on $Y$,
\begin{equation} \label{eq:interno-4}
 c_Y^2 \eps^2 \log \frac{\delta}{\eps} J(\xi)  + O(\eps^2),
\end{equation}
where $J(\xi)$ is as  in \eqref{eq:expansion-55}. Here we used the comments after Lemma \ref{l:quadr-4} 
and the fact that the region $\{r \leq 2\}$ contributes an order $O(\eps^2)$ to the integral. 

Concerning the region $A_{\eps,\delta} = \{ \delta/ \eps \le r \le 2 \delta / \eps \}$, similarly to 
\eqref{eq:annulus} we have that  
\begin{equation} \label{eq:annulus-2}
\abs*{ \int_{ A_{\eps,\delta} } \widetilde{w}_{\eps, \delta} \lp_{g_\eps} \widetilde{w}_{\eps, \delta} \, d \mu_{g_\eps}} \leq C \hspace{0.03cm} \mathrm{Vol}_{g_\eps} (A_{\eps,\delta}) \eps^{n-2} \eps^{n} 
\leq C \eps^{n-2} = O(\eps^2),
\end{equation}
where we used the fact that now $n = 4$. 

From~\eqref{eq:interno-4},~\eqref{eq:annulus-2} and~\eqref{eq:expansion-denominator-2-4} we finally obtain that 
\[
Q_{\bar{g}}(\widetilde{u}_{\eps, \delta}) = \mathcal{Y}_P +  c_Y^2 \eps^2 \log \frac{\delta}{\eps} J(\xi) + O(\eps^2) < \mathcal{Y}_P
\]
for $\eps$ small, by Lemma \ref{l:quadr-4}. This is the desired inequality. 
\end{proof}

\printbibliography

\end{document}